\newtheorem{theorem}{Theorem}[section]
\newtheorem{lemma}[theorem]{Lemma}
\newtheorem{proposition}[theorem]{Proposition}
\newtheorem{corollary}[theorem]{Corollary}
\theoremstyle{definition}
\newtheorem{definition}[theorem]{Definition}
\newtheorem*{merci}{Acknowledgements}
\theoremstyle{remark}
\newtheorem{remark}{Remark}[section]
\def\R{{\mathbb R}}
\numberwithin{equation}{section}
\begin{document}

\title[Bilinear Strichartz estimates for the ZK equation and applications ]{Bilinear Strichartz estimates for the  Zakharov-Kuznetsov equation and applications}
\author[L. Molinet and D. Pilod]{Luc Molinet $^{\star}$ and Didier Pilod $^{\dagger}$}

\subjclass[2000]{Primary ; Secondary } \keywords{Zakharov-Kuznetsov equation, Initial value
problem, Bilinear Strichartz estimates, Bourgain's spaces}

\maketitle

\vspace{-0.5cm}

{\scriptsize \centerline{$^{\star}$ LMPT, Universit\' e Fran\c cois Rabelais
Tours, CNRS UMR 7350, F\'ed\'eration Denis Poisson-CNRS,}
             \centerline{ Parc Grandmont, 37200 Tours, France.}
             \centerline{email: Luc.Molinet@lmpt.univ-tours.fr}

\vspace{0.1cm}
             \centerline{$^{\dagger}$  Instituto de Matem\'atica,
             Universidade Federal do Rio de Janeiro,}
              \centerline{Caixa Postal 68530, CEP: 21945-970, Rio
              de Janeiro, RJ, Brazil.}
              \centerline{email: didier@im.ufrj.br}}

\vspace{0.5cm}

\begin{abstract} This article is concerned with the Zakharov-Kuznetsov equation
\begin{equation}  \label{ZK0}
\partial_tu+\partial_x\Delta u+u\partial_xu=0  .
\end{equation} 
We prove that the associated initial value problem is locally well-posed in $H^s(\mathbb R^2)$ for $s>\frac12$ and globally well-posed in $H^1(\mathbb R\times \mathbb T)$ and in $H^s(\R^3) $ for $ s>1$. Our main new ingredient is a bilinear Strichartz estimate in the context of Bourgain's spaces which  allows to control the high-low frequency interactions appearing in the nonlinearity of \eqref{ZK0}. In the $\mathbb R^2$ case, we also need to use a recent result by Carbery, Kenig and Ziesler on sharp Strichartz estimates for homogeneous dispersive operators. Finally,  to prove the global well-posedness result  in $ \R^3 $, we need to use the atomic spaces introduced by Koch and Tataru.
\end{abstract}

\section{Introduction}

The Zakharov-Kuznetsov equation (ZK)
\begin{equation} \label{ZK}
\partial_tu+\partial_x\Delta u+u\partial_xu=0,
\end{equation}
where $u=u(x,y,t)$ is a real-valued function, $t \in \mathbb R$, $x
\in \mathbb R$, $y \in \mathbb R$, $\mathbb T$ or $\mathbb R^2$ and $\Delta$ is the
laplacian, was introduced by Zakharov and Kuznetsov in \cite{ZK} to
describe the propagation of ionic-acoustic waves in magnetized
plasma.  The derivation of ZK from the Euler-Poisson system
with magnetic field was performed by Lannes, Linares and Saut \cite{LLS} (see also \cite{LS} for a formal derivation). Moreover, the
following quantities are conserved by the flow of ZK,
\begin{equation} \label{M}
M(u)=\int u(x,y,t)^2dxdy,
\end{equation}
and
\begin{equation} \label{H}
H(u)=\frac12 \int \big( |\nabla u(x,y,t)|^2-\frac13u(x,y,t)^3\big)dxdy.
\end{equation}
Therefore $L^2$ and $H^1$ are two natural spaces to study the well-posedness
for the ZK equation.

In the 2D case, Faminskii proved in \cite{Fa} that the Cauchy
problem associated to \eqref{ZK} was well-posed in the energy space
$H^1(\mathbb R^2)$. This result was recently improved by Linares and
Pastor who proved well-posedness in $H^s(\mathbb R^2)$, for $s >
3/4$.  Both results were proved by using a fixed point argument taking 
advantage of the dispersive smoothing effects associated to the linear part 
of ZK, following the ideas of Kenig, Ponce and Vega \cite{KPV} for the KdV equation.

The case of the cylinder $\mathbb R \times \mathbb T$ was
treated by Linares, Pastor and Saut in \cite{LPS}. They obtained
well-posedness in $H^s(\mathbb R \times \mathbb T)$ for $s>\frac32$.
Note that the best results in the 3D case were obtained last year by
Ribaud and Vento \cite{RV} (see also Linares and Saut \cite{LS} for former results). They proved
local well-posedness in $H^s(\mathbb R^3)$ for $s>1$ and in $B_2^{1,1}(\mathbb R^3)$. However that it is
still an open problem to obtain global solutions in $\mathbb R \times \mathbb T$ and $\mathbb R^3$.

The objective of this article is to improve the local well-posedness
results for the ZK equation in $\mathbb R^2$ and $\mathbb R \times \mathbb T$, and to  prove new global well-posedness results. In this direction, we obtain the global well-posedness in $H^1(\mathbb R \times \mathbb T)$ and in $ H^s(\R^3) $ for $ s>1$. Next are our main results.

\begin{theorem} \label{theoR2}
Assume that $s>\frac12$. For any $u_0 \in H^s(\mathbb R^2)$, there exists $T=T(\|u_0\|_{H^s})>0$ and a unique solution of \eqref{ZK} such that $u(\cdot,0)=u_0$ and 
\begin{equation} \label{theoR2.1}
u \in C([0,T]:H^s(\mathbb R^2)) \cap X_T^{s,\frac12+} \ .
\end{equation}
Moreover, for any $T' \in (0,T)$, there exists a neighborhood $\mathcal{U}$ of $u_0$ in $H^s(\mathbb R^2)$, such that the flow map data-solution 
\begin{equation} \label{theoR2.2}
S: v_0 \in \mathcal{U} \mapsto v \in  C([0,T']:H^s(\mathbb R^2)) \cap X_{T'}^{s,\frac12+}
\end{equation}
is smooth.
\end{theorem}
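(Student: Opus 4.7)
\smallskip

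\textbf{Set up.} The plan is a Picard iteration in the restricted Bourgain space $X^{s,\frac12+}_T$ associated to the linear group $U(t)=e^{-t\partial_x\Delta}$. After writing \eqref{ZK} in Duhamel form and localizing with a smooth time cut-off, the classical $X^{s,b}$ linear estimates (for the free term and for the Duhamel operator) reduce the matter to proving a bilinear estimate of the form
\begin{equation*}
\bigl\|\partial_x(uv)\bigr\|_{X^{s,-\frac12+2\delta}} \lesssim \|u\|_{X^{s,\frac12+\delta}}\|v\|_{X^{s,\frac12+\delta}}
\end{equation*}
for some small $\delta>0$; the fixed-point argument in a small ball and the smoothness of the flow map in \eqref{theoR2.2} then follow automatically from the bilinearity.

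\smallskip

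\textbf{Reduction to dyadic pieces.} To establish the bilinear estimate I would decompose the three factors dyadically both in spatial frequency $(\xi,\eta)$ (scales $N_1,N_2,N_3$) and in modulation $\sigma=\tau-(\xi^3+\xi\eta^2)$ (scales $L_1,L_2,L_3$), assuming by symmetry $N_1\ge N_2$. In the \emph{high-high} regime $N_1\sim N_2$, the required estimate should follow by combining H\"older's inequality with the sharp $L^4_{t,x,y}$ Strichartz inequality of Carbery--Kenig--Ziesler for the operator $\partial_x\Delta$ applied to two of the factors (the third being placed in $L^2_{t,x,y}$), the modulation weight $L_j^{-\frac12+\delta}$ providing the dyadic summability.

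\smallskip

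\textbf{Main obstacle.} The genuinely hard region is the \emph{high-low} interaction $N_1\gg N_2$, which by frequency localization of the output forces $N_3\sim N_1$ and therefore demands a factor of $(N_2/N_1)^s$. A crude use of two $L^4$-Strichartz estimates would only close for $s>\frac34$, the Linares--Pastor threshold. To descend to $s>\frac12$, I would invoke the new bilinear Strichartz estimate announced in the abstract: it should give, under frequency separation $N_1\gg N_2$, an $L^2_{t,x,y}$ bound on $U(t)f\cdot U(t)g$ with a quantitative gain in $N_2/N_1$ coming from the transversality of the two relevant pieces of the characteristic surface $\tau=\xi^3+\xi\eta^2$. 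Establishing this bilinear estimate is the main technical difficulty: one would reduce via Plancherel to a measure estimate on the level set of the resonance function
\begin{equation*}
\Omega(\xi_1,\eta_1,\xi_2,\eta_2)=(\xi_1^3+\xi_1\eta_1^2)+(\xi_2^3+\xi_2\eta_2^2)-\bigl((\xi_1+\xi_2)^3+(\xi_1+\xi_2)(\eta_1+\eta_2)^2\bigr),
\end{equation*}
and exploit the non-degeneracy $\partial_{\xi_1}\Omega\sim\xi_1^2$ in the regime $N_2\ll N_1$.

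\smallskip

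\textbf{Conclusion.} Once the bilinear Strichartz estimate is in hand, the dyadic sum over $N_1,N_2,N_3,L_1,L_2,L_3$ closes precisely for $s>\frac12$, and the contraction mapping principle in a small ball of $X^{s,\frac12+}_T$ yields existence, uniqueness, and smoothness of the flow. The continuity into $C([0,T];H^s(\mathbb R^2))$ comes from the standard embedding $X^{s,\frac12+}_T\hookrightarrow C([0,T];H^s)$. I expect the proof of the bilinear Strichartz estimate to be the single genuinely delicate step; the remainder is systematic Littlewood--Paley book-keeping.
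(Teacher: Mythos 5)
Your overall architecture is exactly the paper's: contraction in $X^{s,\frac12+\delta}_T$, reduction to the bilinear estimate $\|\partial_x(uv)\|_{X^{s,-\frac12+2\delta}}\lesssim\|u\|_{X^{s,\frac12+\delta}}\|v\|_{X^{s,\frac12+\delta}}$, dyadic decomposition in frequency and modulation, a bilinear Strichartz estimate for the separated-frequency interactions obtained from a level-set estimate using $|\partial_{\xi_1}H|\gtrsim(N_1\vee N_2)^2$, and the Carbery--Kenig--Ziesler $L^4$ bound for the high-high regime. The high-low analysis you sketch is essentially Proposition 3.7 of the paper, and the $s>\frac34$ obstruction you identify for the naive $L^4\times L^4\times L^2$ scheme is the correct motivation.

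There is, however, a genuine gap in your treatment of the $High\times High\to High$ interactions. The CKZ estimate does not give a clean $L^4$ bound with a $N^{\frac14}$ derivative gain: it gives $\||K(D)|^{\frac18}e^{-t\partial_x\Delta}\varphi\|_{L^4}\lesssim\|\varphi\|_{L^2}$ with the weight $|K(\xi,\mu)|=|3\xi^2-\mu^2|$, which \emph{vanishes} on the lines $|\mu|=\sqrt3|\xi|$. So "applying CKZ to two of the factors" only closes when both of those factors are frequency-localized at distance $\sim N$ from these lines; near the lines the weight gains nothing, and this is not a removable technicality, because the resonance function $H$ itself vanishes on the planes $\xi_1=\mp\mu_1/\sqrt3$, $\xi_2=\pm\mu_2/\sqrt3$ (as the paper points out in a remark), so you cannot fall back on a large-modulation argument either. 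The paper handles this by splitting the high-high-high region into $\mathcal D_1,\dots,\mathcal D_4$ according to which pairs of frequencies lie in the cones $(1-\alpha)^{1/2}\sqrt3|\xi_i|\le|\mu_i|\le(1-\alpha)^{-1/2}\sqrt3|\xi_i|$, using CKZ only on $\mathcal D_4$, and on the degenerate regions running a sign analysis on $\xi_1\xi_2$ and $\mu_1\mu_2$ (supported by the elementary Lemma 4.2) to show that either $|H|\gtrsim N^3$ or one of $|\partial_{\xi_1}H|$, $|\partial_{\mu_1}H|$ is $\gtrsim N^2$, which restores a bilinear $L^2$ gain of $N^{-\frac12}$. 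Your proposal would need this (or an equivalent) additional argument to reach $s>\frac12$; likewise the $High\times High\to Low$ case requires interpolating the two bilinear estimates \eqref{BilinStrichartzI0} and \eqref{BilinStrichartzI1} rather than using either one alone.
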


\begin{theorem} \label{theoRT}
Assume that $s \ge 1$. For any $u_0 \in H^s(\mathbb R \times \mathbb T)$, there exists $T=T(\|u_0\|_{H^s})>0$ and a unique solution of \eqref{ZK} such that $u(\cdot,0)=u_0$ and 
\begin{equation} \label{theoRT.1}
u \in C([0,T]:H^s(\mathbb R \times \mathbb T)) \cap X_T^{s,\frac12+} \ .
\end{equation}
Moreover, for any $T' \in (0,T)$, there exists a neighborhood $\widetilde{\mathcal{U}}$ of $u_0$ in $H^s(\mathbb R \times \mathbb T)$, such that the flow map data-solution 
\begin{equation} \label{theoRT.2}
S: v_0 \in \mathcal{U} \mapsto v \in  C([0,T']:H^s(\mathbb R \times \mathbb T)) \cap X_{T'}^{s,\frac12+}
\end{equation}
is smooth.
\end{theorem}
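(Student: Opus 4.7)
\emph{Proof proposal.} The plan is to mimic the strategy used for Theorem \ref{theoR2}. Via Duhamel, recast \eqref{ZK} on $\mathbb{R}\times\mathbb{T}$ as a fixed point equation in the restricted Bourgain space $X_T^{s,1/2+}$ built from the ZK group $U(t)=e^{-t\partial_x\Delta}$, whose phase function is $\sigma(\xi,\eta)=\xi^3+\xi\eta^2$, and close a contraction on a small ball. The linear estimates for $U(t)$ in $X^{s,b}$ are classical, so the entire problem reduces to proving the bilinear estimate
\[
\bigl\|\partial_x(uv)\bigr\|_{X^{s,-1/2+}}\lesssim \|u\|_{X^{s,1/2+}}\|v\|_{X^{s,1/2+}},\qquad s\geq 1,
\]
on the cylinder. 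Given such an estimate, uniqueness, persistence of regularity, and smoothness of the flow map all follow automatically from the Picard iteration, exactly as in the $\mathbb{R}^2$ case.

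For the bilinear estimate, decompose both inputs dyadically in the $x$-frequency $|\xi|\sim N_i$, reduce by symmetry to $N_1\geq N_2$, and distinguish the high-high regime $N_1\sim N_2$ from the high-low regime $N_1\gg N_2$. The former is comparatively soft because the $\partial_x$-loss is evenly shared. The latter is the critical one: there the resonance identity
\[
\sigma(\xi_1,\eta_1)+\sigma(\xi_2,\eta_2)-\sigma(\xi_1+\xi_2,\eta_1+\eta_2)=-3\xi_1\xi_2(\xi_1+\xi_2)-2\eta_1\eta_2(\xi_1+\xi_2)-\xi_1\eta_2^2-\xi_2\eta_1^2
\]
forces one of the three modulations to be $\gtrsim N_1^2 N_2$ on a generic subset of frequency space, and this modulation gain, combined with a bilinear Strichartz bound for $P_{N_1}u\cdot P_{N_2}v$ in $L^2_{xyt}$, is what must absorb the $N_1$ coming from $\partial_x$.

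The new ingredient compared with the $\mathbb{R}^2$ argument is the bilinear Strichartz estimate on $\mathbb{R}\times\mathbb{T}$. Since the Carbery-Kenig-Ziesler sharp homogeneous Strichartz used in Theorem \ref{theoR2} is unavailable on a partially compact domain, we replace it by expanding $u(x,y,t)=\sum_{n\in\mathbb{Z}}e^{iny}u_n(x,t)$, observing that each Fourier fiber $u_n$ evolves under the 1D Airy-type group with symbol $\xi^3+n^2\xi$, applying a 1D bilinear Strichartz (in the spirit of Kenig-Ponce-Vega, via local smoothing plus a maximal-function bound) fiber by fiber, and reassembling the product $u\cdot v$ through Plancherel and Cauchy-Schwarz in $(n,m)\in\mathbb{Z}^2$. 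The summation in the discrete variable costs a factor which has no counterpart in $\mathbb{R}^2$, and this loss is exactly compensated by strengthening the regularity threshold from $s>\tfrac12$ to $s\geq 1$, which explains the gap between Theorems \ref{theoR2} and \ref{theoRT}. Making this compensation sharp enough to reach $s=1$ is the main technical obstacle; once the bilinear Strichartz on the cylinder is obtained, everything else is a routine adaptation of the $\mathbb{R}^2$ argument.
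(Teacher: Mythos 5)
Your global architecture (Duhamel plus contraction in $X_T^{s,\frac12+}$, reduction to the bilinear estimate $\|\partial_x(uv)\|_{X^{s,-\frac12+}}\lesssim\|u\|_{X^{s,\frac12+}}\|v\|_{X^{s,\frac12+}}$, and the use of the resonance function together with a bilinear Strichartz bound in the high--low regime) is exactly the paper's, and the fixed point conclusion is indeed routine once Proposition \ref{BilinRT} is in hand. However, there are two genuine gaps in the way you propose to prove the bilinear estimate. First, you have misplaced the difficulty: you call the high--high regime $N\sim N_1\sim N_2$ ``comparatively soft,'' but on the cylinder this is precisely the hard case, and it is where the whole restriction $s\ge 1$ comes from. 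The bilinear Strichartz estimate \eqref{BilinStrichartzI1} requires frequency separation ($N_1\ge 4N_2$ or $N_2\ge 4N_1$) because its proof rests on $|\partial H/\partial\xi_1|=\bigl||(\xi_1,\mu_1)|^2-|(\xi_2,\mu_2)|^2\bigr|\gtrsim(N_1\vee N_2)^2$, which degenerates when $N_1\sim N_2$; and the Carbery--Kenig--Ziesler $L^4$ estimate that rescues the $\mathbb R^2$ case is unavailable on $\mathbb R\times\mathbb T$. The paper replaces it by a lengthy case analysis (splitting according to the sizes of $|\xi|$, $|\xi_1|$, $|\xi_2|$, of $\bigl||(\xi_i,q_i)|^2-|(\xi_j,q_j)|^2\bigr|$ and of $|\xi_{max}||\xi_{min}|$, and exploiting $\partial^2H/\partial\xi_1^2=6|\xi|$ and $\partial^2H/\partial q_1^2=2|\xi|$ via Lemma \ref{basicIII}). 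Your proposal offers no mechanism at all for this regime.

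Second, your stated mechanism for the high--low regime is not correct as written: the resonance function does \emph{not} force a modulation $\gtrsim N_1^2N_2$, even generically. Taking $\xi_1=0$, $\mu_1\sim N_1$ and $\xi_2\to 0$, $\mu_2\sim N_2$ gives $H=\xi_2\mu_1(\mu_1+2\mu_2)$, which can be made arbitrarily small while $|(\xi_1,\mu_1)|\sim N_1\gg N_2\sim|(\xi_2,\mu_2)|$. The paper never uses largeness of $H$ here; it uses largeness of $\partial H/\partial\xi_1$ through a measure/counting estimate on the convolution support (Lemmas \ref{basicI}--\ref{basicII}), which is a different and more robust mechanism. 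Relatedly, your attribution of the gap between $s>\frac12$ and $s\ge 1$ to a loss in the discrete $y$-summation of a fiberwise 1D bilinear Strichartz is not what happens: the bilinear Strichartz estimate of Proposition \ref{BilinStrichartzI} is proved directly in the $(\xi,q,\tau)$ variables and holds on $\mathbb R\times\mathbb T$ with the same strength as on $\mathbb R^2$ (the discretization only costs the harmless ``$+1$'' in Lemma \ref{basicI} and estimate \eqref{basicIII2}); the entire loss is concentrated in the high--high--to--high interactions you set aside.
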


\begin{remark} The spaces $X_T^{s,b}$ are defined in Section \ref{notation}
\end{remark}

As a consequence of Theorem \ref{theoRT}, we deduce the following result by using the conserved quantities $M$ and $H$ defined in \eqref{M} and \eqref{H}.
\begin{theorem} \label{theoRTglobal}
The initial value problem associated to the Zakharov-Kuznetsov equation is globally well-posed in $H^1(\mathbb R \times \mathbb T)$.
\end{theorem}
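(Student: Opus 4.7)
The strategy is the standard a priori bound plus iteration scheme. By Theorem \ref{theoRT} with $s=1$, for any $u_0\in H^1(\R\times\T)$ there exists a local solution on $[0,T]$ with $T=T(\|u_0\|_{H^1})>0$. Since the existence time depends only on the $H^1$-norm, it suffices to establish a time-independent a priori bound on $\|u(t)\|_{H^1}$ so as to reapply the local theory on contiguous intervals and cover any prescribed time interval $[0,T^*]$.

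The a priori bound comes from the conservation laws $M(u)$ and $H(u)$ combined with a Gagliardo--Nirenberg inequality on $\R\times\T$. Locally $\R\times\T$ behaves as a two-dimensional manifold, and by a standard Littlewood--Paley decomposition in the $y$-variable (separating the zero and nonzero Fourier modes on $\T$) one obtains
\[
\|u\|_{L^3(\R\times\T)}^3 \;\le\; C \,\|u\|_{L^2(\R\times\T)}^2\,\|\nabla u\|_{L^2(\R\times\T)} .
\]
Inserting this into \eqref{H} and applying Young's inequality yields
\[
\|\nabla u\|_{L^2}^2 \;\le\; 4\,H(u) + C\,\|u\|_{L^2}^4,
\]
so that $\|u(t)\|_{H^1}^2 \le \|u_0\|_{L^2}^2 + 4H(u_0) + C\,\|u_0\|_{L^2}^4$ whenever $M$ and $H$ are conserved along the flow.

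The only delicate point is justifying the conservation of $M$ and $H$ at $H^1$ regularity, since \eqref{M}--\eqref{H} are only \emph{formally} conserved and their derivation requires sufficient smoothness. I would proceed by approximation: regularize the datum $u_0$ by a sequence $u_0^n\in H^\infty(\R\times\T)$ with $u_0^n \to u_0$ in $H^1$. For each $n$, Theorem \ref{theoRT} at higher regularity (or, if necessary, persistence of regularity obtained by standard commutator estimates in $X^{s,1/2+}_T$) produces a smooth solution $u^n$ on a common time interval $[0,T]$ depending only on $\sup_n\|u_0^n\|_{H^1}$, for which both $M(u^n(t))=M(u_0^n)$ and $H(u^n(t))=H(u_0^n)$ hold by direct integration by parts. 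The smoothness of the flow map asserted in Theorem \ref{theoRT} guarantees $u^n \to u$ in $C([0,T];H^1)$, and passing to the limit using the continuity of $M$ and $H$ with respect to the $H^1$-topology gives conservation for $u$ itself.

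Combining the a priori bound with the iteration argument extends $u$ to a solution defined for all times, establishing global well-posedness. The main obstacle is really the Gagliardo--Nirenberg step on the cylinder: while the inequality is morally the same as on $\R^2$, its proof requires some care because the Sobolev embeddings on $\R\times\T$ are anisotropic, and one must handle separately the mean (in $y$) of $u$, which only satisfies one-dimensional embeddings, and the nonzero-mean part, which does embed as in two dimensions.
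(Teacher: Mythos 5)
Your argument is correct and is exactly the route the paper takes: it compresses the whole proof into one sentence invoking Theorem \ref{theoRT} with $s=1$ together with the conservation of $M$ and $H$, which is precisely your a priori bound plus iteration scheme (including the approximation step to justify the conservation laws at $H^1$ regularity, which the paper leaves implicit). The only caveat is that on $\mathbb{R}\times\mathbb{T}$ the Gagliardo--Nirenberg inequality as displayed fails for $y$-independent functions and must be replaced by $\|u\|_{L^3}^3 \lesssim \|u\|_{L^2}^2\|u\|_{H^1}$ --- a point you essentially acknowledge in your last paragraph, and one which leaves the a priori bound unchanged after Young's inequality.
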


\begin{remark}
Theorem \ref{theoRTglobal} provides a good setting to apply the techniques of  Rousset  and Tzvetkov \cite{RT1}, \cite{RT2} and prove the transverse instability of the KdV soliton for the ZK equation.
\end{remark}
Finally, we combine the conserved quantities $M$ and $H$ with a well-posedness result in the Besov space $ B^{1,1}_2 $ and interpolation arguments  to prove :
\begin{theorem} \label{theo3}
The initial value problem associated to the Zakharov-Kuznetsov equation is globally well-posed in $H^s(\mathbb R^3 )$ for any $ s>1$.
\end{theorem}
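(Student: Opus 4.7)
My plan to prove Theorem \ref{theo3} is to combine three ingredients: (i) an a priori $H^1$ bound coming from the conservation of $M$ and $H$, (ii) the local well-posedness in the Besov space $B^{1,1}_2(\R^3)$ due to Ribaud--Vento, and (iii) a logarithmic interpolation inequality between $H^1$ and $H^s$, which allows the local theory to be iterated up to any finite time.

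For the $H^1$ bound I would use the 3D Gagliardo--Nirenberg inequality
\[
\int_{\R^3} |u|^3 \, dx \;\lesssim\; \|u\|_{L^2}^{3/2}\,\|\nabla u\|_{L^2}^{3/2},
\]
combined with Young's inequality, to absorb the cubic term of $H(u)$ into $\|\nabla u\|_{L^2}^2$ and conclude $\sup_{t\in\R}\|u(t)\|_{H^1} \le \Lambda(\|u_0\|_{H^1})$. For the logarithmic interpolation, a dyadic Littlewood--Paley decomposition with an optimally chosen cutoff frequency gives, for every $s>1$,
\[
\|v\|_{B^{1,1}_2(\R^3)} \;\lesssim\; \|v\|_{H^1}\bigl(1+\log\bigl(e + \|v\|_{H^s}/\|v\|_{H^1}\bigr)\bigr)^{1/2}.
\]
These two pieces combine as follows: starting from $u_0\in H^s$, the Besov LWP on an interval $[t_n,t_{n+1}]$ of length $T_n \gtrsim \|u(t_n)\|_{B^{1,1}_2}^{-\alpha}$ yields, via the Picard fixed point, a tame bound $\|u(t_{n+1})\|_{H^s} \le 2\,\|u(t_n)\|_{H^s}$. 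Plugging the interpolation and the $H^1$ bound gives $T_n \gtrsim (\log(e+\|u(t_n)\|_{H^s}))^{-\alpha/2}$, while $\|u(t_n)\|_{H^s}\le 2^n\|u_0\|_{H^s}$, hence $T_n \gtrsim n^{-\alpha/2}$. Since the series $\sum_n n^{-\alpha/2}$ diverges for $\alpha\le 2$, any prescribed final time is reached after finitely many iterations, yielding global existence; uniqueness and continuous dependence then transfer directly from the local theory.

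The main technical obstacle is the tame $H^s$ growth bound inside the Besov Picard iteration: one needs the constants in the multilinear estimates to depend polynomially on $\|u\|_{B^{1,1}_2}$ and linearly on $\|u\|_{H^s}$, so that short time yields only mild growth of the high Sobolev norm. This is precisely where the Koch--Tataru atomic $U^p/V^p$ framework mentioned in the abstract becomes essential, as it provides a functional setting in which the bilinear and trilinear estimates at the borderline Besov regularity $s=1$ can be performed with the required sharpness, allowing the iteration scheme above to close and producing global $H^s(\R^3)$ solutions for every $s>1$.
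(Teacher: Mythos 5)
Your proposal follows essentially the same route as the paper: uniform $H^1$ control from the conservation of $M$ and $H$, the logarithmic interpolation $\|v\|_{B^{1,1}_2}\lesssim 1+\|v\|_{H^1}\bigl(\log(1+\|v\|_{H^s})\bigr)^{1/2}$, a doubling-time lower bound $T(u_0)\gtrsim \|u_0\|_{B^{1,1}_2}^{-2}$ obtained by redoing the local theory in the Koch--Tataru $U^2_S/V^2_S$ spaces (precisely to avoid the $|\ln T|$ loss that Besov-type Bourgain spaces would produce, which would make the resulting series $\sum_n 1/(n(\ln n)^2)$ convergent), and the divergence of $\sum_n 1/n$ to reach any finite time. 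Your divergence criterion $\alpha\le 2$ matches the exponent $\alpha=2$ the paper actually achieves, and the tame estimate you identify as the technical crux is exactly the paper's estimate $\|\mathcal{G}_T(w)\|_{Y^{s,2}}\lesssim \|u_0\|_{H^s}+T^{1/2}\|w\|_{Y^{1,1}}\|w\|_{Y^{s,2}}$.
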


\begin{remark} Note that the  global well-posedness for the ZK equation in the energy space $H^1(\mathbb R^3)$ is still  an open problem.
\end{remark}

The main new ingredient in the proofs of Theorems \ref{theoR2},  \ref{theoRT} and \ref{theo3} is a bilinear estimate in the context of Bourgain's spaces (see for instance  the work of Molinet, Saut and Tzvetkov for the the KPII equation \cite{MST} for similar estimates), which allows to control the interactions between high and low frequencies appearing in the nonlinearity of \eqref{ZK}. In the $\mathbb R^2$ case, we also need to use a recent result by Carbery, Kenig and Ziesler on sharp Strichartz estimates for homogeneous dispersive operators. This allows us to treat the case of high-high to high frequency interactions. With those estimates in hand, we are able to derive the crucial bilinear estimates (see Propositions \ref{BilinR2} and \ref{BilinRT} below) and conclude the proof of Theorems \ref{theoR2} and \ref{theoRT} by using a fixed point argument in Bourgain's spaces. To prove the global wellposedness in $ \R^3 $ we follows ideas in \cite{Bou} and need to get a suitable lower bound on the time before the norm of solution doubles.  To get this bound we will have to work in the framework of the atomic spaces $ U^2_S $ and $V^2_S $ introduced by Koch and Tataru in \cite{KT}.

We saw very recently on the arXiv that Gr\"unrock and Herr obtained a similar result \cite{GH} in the $\mathbb R^2$ case by using the same kind of techniques. Note however that they do not need to use the Strichartz estimate derived by Carbery, Kenig and Ziesler. On the other hand, they use a linear transformation on the equation to obtain a symmetric symbol $\xi^3+\eta^3$ in order to apply their arguments. Since we derive our bilinear estimate directly on the original equation,  our method of proof also worked in the $\mathbb R \times \mathbb T$ setting (see the results in Theorems \ref{theoRT} and \ref{theoRTglobal}). 

This paper is organized as follows: in the next section we introduce the notations and define the function spaces. In Section 3, we recall the linear Strichartz estimates for ZK and derive our crucial bilinear estimate. Those estimates are used in Section 4 and 5 to prove the bilinear estimates in $\mathbb R^2$ and $\mathbb R \times \mathbb T$. Finally, Section 6 is devoted to the $ \R^3 $ case.

\section{Notation, function spaces and linear estimates} \label{notation}

\subsection{Notation}
For any positive numbers $a$ and $b$, the notation $a \lesssim b$
means that there exists a positive constant $c$ such that $a \le c
b$. We also write $a \sim b$ when $a \lesssim b$ and $b \lesssim a$.
If $\alpha \in \mathbb R$, then $\alpha_+$, respectively $\alpha_-$,
will denote a number slightly greater, respectively lesser, than
$\alpha$. If $A$ and $B$ are two positive numbers, we use the
notation $A\wedge B=\min(A,B)$ and $A \vee B=\max(A,B)$. Finally,
$\text{mes} \, S$ or $|S|$ denotes the Lebesgue measure of a
measurable set $S$ of $\mathbb R^n$, whereas $\# F$ or $|S|$ denotes
the cardinal of a finite set $F$.

We use the notation $|(x,y)|=\sqrt{3x^2+y^2}$ for  $(x,y) \in \mathbb R^2$.
For $u=u(x,y,t) \in \mathcal{S}(\mathbb R^3)$, $\mathcal{F}(u)$, or
$\widehat{u}$, will denote its space-time Fourier transform, whereas
$\mathcal{F}_{xy}(u)$, or $(u)^{\wedge_{xy}}$, respectively
$\mathcal{F}_t(u)=(u)^{\wedge_t}$, will denote its Fourier transform
in space, respectively in time. For $s \in \mathbb R$, we define the
Bessel and Riesz potentials of order $-s$, $J^s$ and $D^s$, by
\begin{displaymath}
J^su=\mathcal{F}^{-1}_{xy}\big((1+|(\xi,\mu)|^2)^{\frac{s}{2}}
\mathcal{F}_{xy}(u)\big) \quad \text{and} \quad
D^su=\mathcal{F}^{-1}_{xy}\big(|(\xi,\mu)|^s \mathcal{F}_{xy}(u)\big).
\end{displaymath}

Throughout the paper, we fix a smooth cutoff function $\eta$ such that
\begin{displaymath}
\eta \in C_0^{\infty}(\mathbb R), \quad 0 \le \eta \le 1, \quad
\eta_{|_{[-5/4,5/4]}}=1 \quad \mbox{and} \quad  \mbox{supp}(\eta)
\subset [-8/5,8/5].
\end{displaymath}
For $k \in \mathbb N^{\star}=\mathbb Z \cap [1,+\infty)$, we define
\begin{displaymath}
\phi(\xi)=\eta(\xi)-\eta(2\xi), \quad
\phi_{2^k}(\xi,\mu):=\phi(2^{-k}|(\xi,\mu)|).
\end{displaymath}
and
\begin{displaymath}
\psi_{2^k}(\xi,\mu,\tau)=\phi(2^k(\tau-(\xi^3+\xi\mu^2))).
\end{displaymath}
By convention, we also denote
\begin{displaymath}
\phi_1(\xi,\mu)=\eta(|(\xi,\mu)|), \quad \text{and} \quad
\psi_1(\xi,\mu,\tau)=\eta(\tau-(\xi^3+\xi\mu^2)).
\end{displaymath}
Any summations over capitalized variables such as $N, \, L$, $K$ or
$M$ are presumed to be dyadic with $N, \, L$, $K$ or $M \ge 1$,
\textit{i.e.}, these variables range over numbers of the form $\{2^k
: k \in \mathbb N\} $. Then, we have that
\begin{displaymath}
\sum_{N}\phi_N(\xi,\mu)=1, \quad \mbox{supp} \, (\phi_N) \subset
\{\frac{5}{8}N\le |(\xi,\mu)| \le \frac{8}5N\}=:I_N \ , \ N \ge 1,
\end{displaymath}
and
\begin{displaymath}
\mbox{supp} \, (\phi_1) \subset \{|(\xi,\mu)| \le \frac85\}=:I_1  .
\end{displaymath}
Let us define the Littlewood-Paley multipliers by
\begin{equation}\label{proj}
P_Nu=\mathcal{F}^{-1}_{xy}\big(\phi_N\mathcal{F}_{xy}(u)\big), \quad
Q_Lu=\mathcal{F}^{-1}\big(\psi_L\mathcal{F}(u)\big).
\end{equation}

Finally, we denote by $e^{-t\partial_x\Delta}$ the free group
associated with the linearized part of equation \eqref{ZK}, which is
to say,
\begin{equation} \label{V}
\mathcal{F}_{xy}\big(e^{-t\partial_x\Delta}\varphi
\big)(\xi,\mu)=e^{itw(\xi,\mu)}\mathcal{F}_{xy}(\varphi)(\xi,\mu),
\end{equation}
where $w(\xi,\mu)=\xi^3+\xi\mu^2$. We also define the resonance
function $H$ by
\begin{equation} \label{Resonance}
H(\xi_1,\mu_1,\xi_2,\mu_2)=w(\xi_1+\xi_2,\mu_1+\mu_2)-w(\xi_1,\mu_1)-w(\xi_2,\mu_2).
\end{equation}
Straightforward computations give that
\begin{equation} \label{Resonance2}
H(\xi_1,\mu_1,\xi_2,\mu_2)=3\xi_1\xi_2(\xi_1+\xi_2)+\xi_2\mu_1^2+\xi_1\mu_2^2+2(\xi_1+\xi_2)\mu_1\mu_2.
\end{equation}

We make the obvious modifications when working with $u=u(x,y)$ for
$(x,y) \in \mathbb R \times \mathbb T $ and denote by $q$ the Fourier variable corresponding to $y$.

\subsection{Function spaces}

For $1 \le p \le \infty$, $L^p(\mathbb R^2)$ is the usual Lebesgue
space with the norm $\|\cdot\|_{L^p}$, and for $s \in \mathbb R$ ,
the real-valued Sobolev space $H^s(\mathbb R^2)$  denotes the space
of all real-valued functions with the usual norm
$\|u\|_{H^s}=\|J^su\|_{L^2}.$
If $u=u(x,y,t)$ is a function defined for $(x,y) \in
\mathbb R^2$ and $t$ in the time interval $[0,T]$, with $T>0$, if $B$
is one of the spaces defined above, $1 \le p \le \infty$ and $1 \le q \le \infty$, we will
define the mixed space-time spaces $L^p_TB_{xy}$,
$L^p_tB_{xy}$, $L^q_{xy}L^p_T$ by the norms
\begin{displaymath}
\|u\|_{L^p_TB_{xy}} =\Big(
\int_0^T\|u(\cdot,\cdot,t)\|_{B}^pdt\Big)^{\frac1p} \quad , \quad
\|u\|_{L^p_tB_{xy}} =\Big( \int_{\mathbb R}\|u(\cdot,\cdot,t)\|_{B}^pdt\Big)^{\frac1p},
\end{displaymath}
and
\begin{displaymath}
\|u\|_{L^q_{xy}L^p_T}= \left(\int_{\mathbb R^2}\Big( \int_0^T|u(x,y,t)|^pdt\Big)^{\frac{q}{p}}dx\right)^{\frac1q},
\end{displaymath}
if $1 \le p, \ q < \infty$ with the obvious modifications in the case $p=+\infty$ or $q=+\infty$.

For $s$, $b \in \mathbb R$, we introduce the Bourgain spaces
$X^{s,b}$ related to the linear part of \eqref{ZK} as
the completion of the Schwartz space $\mathcal{S}(\mathbb R^3)$
under the norm
\begin{equation} \label{Bourgain}
\|u\|_{X^{s,b}} = \left(
\int_{\mathbb{R}^3}\langle\tau-w(\xi,\mu)\rangle^{2b}\langle
|(\xi,\mu)|\rangle^{2s}|\widehat{u}(\xi,\mu, \tau)|^2 d\xi d\mu
d\tau \right)^{\frac12},
\end{equation}
where $\langle x\rangle:=1+|x|$. Moreover, we define a localized (in time) version of these spaces.
Let $T>0$ be a positive time. Then, if $u: \mathbb R^2 \times
[0,T]\rightarrow \mathbb C$, we have that
\begin{displaymath}
\|u\|_{X^{s,b}_{T}}=\inf \{\|\tilde{u}\|_{X^{s,b}} \ : \ \tilde{u}:
\mathbb R^2 \times \mathbb R \rightarrow \mathbb C, \
\tilde{u}|_{\mathbb R^2 \times [0,T]} = u\}.
\end{displaymath}

We make the obvious modifications for functions defined on $(x,y,t) \in \mathbb R
\times \mathbb Z \times \mathbb R$. In particular, the integration over $\mu \in \mathbb R$
in \eqref{Bourgain} is replaced by a summation over $q \in \mathbb Z$, which is to say
\begin{equation} \label{Bourgainper}
\|u\|_{X^{s,b}} = \left( \sum_{q \in \mathbb Z}\int_{\mathbb
R^2}\langle\tau-w(\xi,q)\rangle^{2b}\langle
|(\xi,q)|\rangle^{2s}|\widehat{u}(\xi,q, \tau)|^2 d\xi  d\tau
\right)^{\frac12},
\end{equation}
where $w(\xi,q)=\xi^3+\xi q^2$.

\subsection{Linear estimates in the $X^{s,b}$ spaces.}
In this subsection, we recall some well-known estimates for
Bourgain's spaces (see \cite{Gi} for instance).

\begin{lemma}[Homogeneous linear estimate] \label{prop1.1}
Let $s \in \mathbb R$ and $b>\frac12$. Then
\begin{equation} \label{prop1.1.2}
\|\eta(t) e^{-t\partial_x\Delta} f\|_{X^{s,b}} \lesssim\|f\|_{H^s} \ .
\end{equation}
\end{lemma}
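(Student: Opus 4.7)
The plan is to compute the space-time Fourier transform of $\eta(t) e^{-t\partial_x\Delta}f$ explicitly, substitute it into the definition \eqref{Bourgain} of the $X^{s,b}$ norm, and separate the spatial and temporal integrals by a change of variables.

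First I would use the definition \eqref{V} of the free group to write
\begin{equation*}
\mathcal{F}_{xy}\bigl(\eta(t) e^{-t\partial_x\Delta}f\bigr)(\xi,\mu) = \eta(t)\, e^{itw(\xi,\mu)}\, \widehat{f}(\xi,\mu),
\end{equation*}
and then take the Fourier transform in time, obtaining
\begin{equation*}
\mathcal{F}\bigl(\eta(t) e^{-t\partial_x\Delta}f\bigr)(\xi,\mu,\tau) = \widehat{\eta}\bigl(\tau - w(\xi,\mu)\bigr)\, \widehat{f}(\xi,\mu).
\end{equation*}

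Next I would plug this into \eqref{Bourgain}, which gives
\begin{equation*}
\|\eta(t) e^{-t\partial_x\Delta}f\|_{X^{s,b}}^2 = \int_{\mathbb{R}^3} \langle \tau - w(\xi,\mu)\rangle^{2b}\, \langle |(\xi,\mu)|\rangle^{2s}\, |\widehat{f}(\xi,\mu)|^2\, |\widehat{\eta}(\tau - w(\xi,\mu))|^2\, d\xi\, d\mu\, d\tau.
\end{equation*}
For each fixed $(\xi,\mu)$ I would perform the change of variable $\sigma = \tau - w(\xi,\mu)$, which has unit Jacobian and factors the integral as
\begin{equation*}
\|\eta(t) e^{-t\partial_x\Delta}f\|_{X^{s,b}}^2 = \Bigl(\int_{\mathbb{R}} \langle \sigma\rangle^{2b}\, |\widehat{\eta}(\sigma)|^2\, d\sigma\Bigr) \cdot \|f\|_{H^s}^2.
\end{equation*}

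The remaining step is to observe that the $\sigma$-integral is a finite constant depending only on $\eta$ and $b$: since $\eta \in C_0^\infty(\mathbb{R})$, its Fourier transform $\widehat{\eta}$ belongs to the Schwartz class, so $\langle \sigma\rangle^{2b}|\widehat{\eta}(\sigma)|^2$ is integrable for every $b \in \mathbb{R}$ (in particular for $b > 1/2$). There is no real obstacle here; the argument is a clean separation of variables, and the restriction $b > 1/2$ in the statement plays no role in the inequality itself but is the natural range in which $X^{s,b}$ embeds continuously into $C_t H^s_{xy}$, which is the context in which this homogeneous estimate is subsequently applied.
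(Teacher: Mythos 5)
Your proof is correct and is the standard argument: the paper gives no proof of this lemma, merely citing Ginibre, and your computation (explicit Fourier transform, change of variable $\sigma=\tau-w(\xi,\mu)$, and the fact that $\widehat{\eta}$ is Schwartz) is exactly the classical one found there. Your closing remark that the hypothesis $b>\frac12$ is not needed for the inequality itself but only for the embedding $X^{s,b}\hookrightarrow C_tH^s_{xy}$ is also accurate.
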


\begin{lemma}[Non-homogeneous linear estimate] \label{prop1.2}
Let $s \in \mathbb R$. Then for any $0<\delta<\frac12$,
\begin{equation} \label{prop1.2.1}
\big\|\eta(t)\int_0^t e^{-(t-t')\partial_x\Delta}g(t')dt'\big\|_{X^{s,\frac12+\delta}} \lesssim  \|g\|_{X^{s,-\frac12+\delta}} \ .
\end{equation}
\end{lemma}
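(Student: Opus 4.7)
My plan is to reduce the estimate, via Plancherel and a change of frequency variable, to a one-dimensional estimate in time only, and then prove that by splitting on Fourier side into low and high modulation.

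First, I would note that the operator $\int_0^t e^{-(t-t')\partial_x \Delta}g(t')dt'$ commutes with the Fourier multiplier $J^s$ in $(x,y)$, and both norms carry the same weight $\langle |(\xi,\mu)|\rangle^s$, so it is enough to prove the inequality for $s=0$. Next, I would set $\tilde g(x,y,t)=e^{t\partial_x\Delta}g(x,y,t)$; the change of variables $\sigma=\tau-w(\xi,\mu)$ in \eqref{Bourgain} gives $\|g\|_{X^{0,b}}=\|\tilde g\|_{L^2_{xy}H^b_t}$, and a direct computation in the $(x,y)$-Fourier variables yields
\begin{equation*}
\mathcal{F}_{xy}\!\Bigl(\eta(t)\int_0^t e^{-(t-t')\partial_x\Delta}g(t')dt'\Bigr)(\xi,\mu,t)
=\eta(t)\int_{\mathbb R}\frac{e^{it\sigma}-e^{itw(\xi,\mu)}}{i(\sigma-w(\xi,\mu))}\,\widehat{g}(\xi,\mu,\sigma)\,d\sigma.
\end{equation*}
By Fubini and Plancherel in $(\xi,\mu)$, matters thus reduce to proving, uniformly in $w\in\mathbb R$, the purely one-dimensional estimate
\begin{equation*}
\Bigl\|\eta(t)\int_{\mathbb R}\frac{e^{it\sigma}-e^{itw}}{i(\sigma-w)}\,h(\sigma)\,d\sigma\Bigr\|_{H^{1/2+\delta}_t}
\lesssim \|\langle\sigma-w\rangle^{-1/2+\delta}h\|_{L^2_\sigma}.
\end{equation*}

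To prove this one-dimensional estimate, I would perform a dyadic splitting in the modulation variable $\langle \sigma-w\rangle$. On the \emph{high-modulation} region $|\sigma-w|\gtrsim 1$, I would split the kernel as a difference
$\frac{e^{it\sigma}}{i(\sigma-w)}-\frac{e^{itw}}{i(\sigma-w)}$: each piece is estimated separately, using that $\|\eta(t)e^{it\sigma}\|_{H^{1/2+\delta}_t}\lesssim\langle\sigma\rangle^{1/2+\delta}$ (with analogous control for the $e^{itw}$ term, constant in $\sigma$), together with Cauchy--Schwarz in $\sigma$ against the weight $\langle\sigma-w\rangle^{-1+2\delta}$, which is integrable precisely because $\delta<1/2$. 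On the \emph{low-modulation} region $|\sigma-w|\lesssim 1$, I would use the Taylor expansion
\begin{equation*}
\frac{e^{it\sigma}-e^{itw}}{i(\sigma-w)}=e^{itw}\sum_{k\ge 0}\frac{(it(\sigma-w))^k}{(k+1)!},
\end{equation*}
which after multiplication by $\eta(t)$ produces Schwartz-class functions in $t$ whose $H^{1/2+\delta}_t$-norms sum geometrically, paired against $h$ via Cauchy--Schwarz on the bounded set $\{|\sigma-w|\lesssim 1\}$.

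The slightly delicate step is the high-modulation piece, where the presence of $b>1/2$ forces one to lose $\langle\sigma\rangle^{1/2+\delta}$ from $\|\eta(t)e^{it\sigma}\|_{H^b_t}$; this loss is exactly compensated by the decay $\langle\sigma-w\rangle^{-1}$ in the kernel provided $\delta<1/2$, which is why the lemma is stated with a strict inequality $0<\delta<1/2$ rather than at the endpoint $b=1/2$. Combining the two regimes and undoing the substitutions yields \eqref{prop1.2.1}.
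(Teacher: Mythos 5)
Your architecture is the classical one (reduce to $s=0$, conjugate by the free group, represent the Duhamel term through the kernel $\frac{e^{it\sigma}-e^{itw}}{i(\sigma-w)}$, split into low and high modulation, Taylor--expand on the low part); the paper itself gives no proof and simply cites Ginibre's survey, where exactly this argument appears, and your low-modulation treatment is correct. Two bookkeeping points first. The one-dimensional estimate must be stated for the conjugated quantity, i.e.\ with the kernel $\frac{e^{it\lambda}-1}{i\lambda}$, $\lambda=\sigma-w$, and the plain $H^{1/2+\delta}_t$ norm: as you wrote it (kernel $e^{it\sigma}-e^{itw}$ but an unshifted $H^{1/2+\delta}_t$ norm), the claim is false uniformly in $w$, since already $\|\eta(t)e^{itw}\|_{H^{1/2+\delta}_t}\sim\langle w\rangle^{1/2+\delta}$. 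Also, $\langle\sigma-w\rangle^{-1+2\delta}$ is not integrable for any $\delta\ge 0$; the weight that actually needs to be integrable is $\langle\lambda\rangle^{-1-2\delta}$, and the relevant condition is $\delta>0$, not $\delta<\frac12$.

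The substantive gap is in the high-modulation term $\eta(t)\int_{|\lambda|\ge 1}\frac{e^{it\lambda}}{i\lambda}\,h(\lambda)\,d\lambda$. Minkowski's inequality followed by Cauchy--Schwarz, as you propose, bounds its $H^{1/2+\delta}_t$ norm by $\int_{|\lambda|\ge1}\langle\lambda\rangle^{1/2+\delta}|\lambda|^{-1}|h(\lambda)|\,d\lambda$, and the ratio of this to $\|\langle\lambda\rangle^{-1/2+\delta}h\|_{L^2}$ is unbounded: testing with $h=\langle\lambda\rangle^{1/2-\delta}\chi_{[R,2R]}$ gives $R$ against $R^{1/2}$. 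This is exactly the point where $b>\frac12$ forbids the triangle inequality in the $\lambda$ variable: the functions $\eta(t)e^{it\lambda}$ have Fourier transforms concentrated near $\tau=\lambda$ and their contributions must be summed in $L^2_\tau$ by almost orthogonality, not in $L^1_\lambda$. The standard repair is to note that $G(t):=\int_{|\lambda|\ge1}\frac{e^{it\lambda}}{i\lambda}h(\lambda)\,d\lambda$ satisfies $\widehat{G}(\tau)=\chi_{\{|\tau|\ge1\}}h(\tau)/(i\tau)$, so that $\|G\|_{H^{1/2+\delta}}\lesssim\|\langle\lambda\rangle^{-1/2+\delta}h\|_{L^2}$ directly by Plancherel, and then to use that multiplication by the Schwartz function $\eta$ is bounded on $H^{1/2+\delta}_t$. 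Only the companion term $\eta(t)\int_{|\lambda|\ge1}\frac{h(\lambda)}{i\lambda}\,d\lambda$, a constant multiple of $\eta$, is correctly handled by Cauchy--Schwarz against $\langle\lambda\rangle^{-1-2\delta}$ (using $\delta>0$). With that replacement your proof closes.
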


\begin{lemma} \label{prop1.3b}
For any $T>0$, $s \in \mathbb R$ and for all $-\frac12< b' \le b
<\frac12$, it holds
\begin{equation} \label{prop1.3b.1}
\|u\|_{X^{s,b'}_T} \lesssim T^{b-b'}\|u\|_{X^{s,b}_T}.
\end{equation}
\end{lemma}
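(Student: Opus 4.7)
The plan is to reduce the inequality to a one-dimensional Sobolev estimate in the time variable. Since $b'\le b$, the inclusion $X^{s,b}_T \hookrightarrow X^{s,b'}_T$ immediately gives the conclusion when $T \ge 1$, so I work in the regime $T \le 1$.

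Given $u$, I fix an extension $\tilde u$ to $\mathbb R^2 \times \mathbb R$ with $\|\tilde u\|_{X^{s,b}} \le 2\|u\|_{X^{s,b}_T}$ and multiply by the cutoff $\eta_T(t) := \eta(t/T)$. Since $\eta \equiv 1$ on $[-5/4, 5/4]$ and $T \le 1$, the product $\eta_T \tilde u$ is again an extension of $u$, so it suffices to show $\|\eta_T \tilde u\|_{X^{s,b'}} \lesssim T^{b-b'} \|\tilde u\|_{X^{s,b}}$. Setting $v(x,y,t) := e^{t \partial_x \Delta} \tilde u(x,y,t)$, a direct Fourier computation shows that $\tilde u \mapsto v$ is an isometry from $X^{s,b}$ onto $H^b_t H^s_{xy}$ that intertwines multiplication by the scalar $\eta_T(t)$. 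Since the weight $\langle |(\xi,\mu)|\rangle^s$ is inert under multiplication in $t$, a Hilbert-space-valued Minkowski argument reduces the claim to the scalar one-dimensional estimate
\[
\|\eta_T g\|_{H^{b'}(\mathbb R)} \lesssim T^{b-b'} \|g\|_{H^b(\mathbb R)}, \qquad -\tfrac12 < b' \le b < \tfrac12.
\]

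To prove this 1D inequality, I will apply Schur's test to the operator whose Fourier-side kernel is
\[
K(\tau,\sigma) = \langle\tau\rangle^{b'} \cdot T\,\widehat\eta\bigl(T(\tau-\sigma)\bigr) \cdot \langle\sigma\rangle^{-b}.
\]
The point is that $\widehat\eta$ is Schwartz, so this kernel is effectively concentrated on $|\tau - \sigma| \lesssim 1/T$. Both Schur integrals $\int|K(\tau,\sigma)|\,d\sigma$ and $\int|K(\tau,\sigma)|\,d\tau$ are estimated by splitting the remaining variable according to whether $\langle\sigma\rangle \lesssim 1/T$ or $\langle\sigma\rangle \gtrsim 1/T$. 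In the large-frequency regime one has $\langle\tau\rangle\sim\langle\sigma\rangle$ on the essential support of the kernel, producing a factor $\langle\tau\rangle^{b'-b}\lesssim T^{b-b'}$. In the small-frequency regime one uses the elementary bound $\int_{|\sigma|\lesssim 1/T}\langle\sigma\rangle^{-b}\,d\sigma \lesssim T^{b-1}$ (valid since $b<1$), combined with $\langle\tau\rangle^{b'}\lesssim T^{-b'}$ if $b'\ge 0$ or $\langle\tau\rangle^{b'}\le 1$ if $b'<0$. These contributions combine to give the Schur bound $T^{b-b'}$, and the Schwartz decay of $\widehat\eta$ makes the remaining tails negligible.

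The only genuinely analytic content is the 1D Schur estimate; the strict constraint $b,b'\in(-\tfrac12,\tfrac12)$ is precisely what keeps the $\langle\sigma\rangle^{-b}$ integral (and symmetrically the $\langle\tau\rangle^{b'}$ contribution) under control. All preceding steps are formal Fourier-side manipulations.
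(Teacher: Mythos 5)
Your setup is the standard one and is carried out correctly: extending $u$, multiplying by $\eta(t/T)$, conjugating by the free group, and reducing to the one--dimensional estimate $\|\eta(\cdot/T)g\|_{H^{b'}(\R)}\lesssim T^{b-b'}\|g\|_{H^{b}(\R)}$ (the paper itself gives no proof and simply refers to \cite{Gi}). The gap is in the final Schur-test step: the two Schur integrals are \emph{not} both $O(T^{b-b'})$. Take $b'=0<b<\tfrac12$ and $\sigma=0$: then
\begin{equation*}
\int_{\R}|K(\tau,0)|\,d\tau=\langle 0\rangle^{-b}\int_{\R}T\,|\widehat\eta(T\tau)|\,d\tau=\|\widehat\eta\|_{L^1},
\end{equation*}
a constant independent of $T$, whereas your claim requires $O(T^{b})$. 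The asymmetry is structural: your gain in the small-frequency regime comes from \emph{integrating} $\langle\sigma\rangle^{-b}$ over $|\sigma|\lesssim 1/T$, which is available for the row sum but not for the column sum, where $\sigma$ is frozen and $\langle\sigma\rangle^{-b}\le 1$ is all you have (symmetrically, the row sum fails when $b'<0$: with $b=0>b'$ and $\tau=0$ it equals $\|\widehat\eta\|_{L^1}$). Since Schur's test only gives $\sqrt{AB}$ with $A=\sup_\tau\int|K|\,d\sigma$ and $B=\sup_\sigma\int|K|\,d\tau$, your kernel bounds can yield at best $T^{(b-b')/2}$; conjugating by the power weight $\langle\cdot\rangle^{(b+b')/2}$ to symmetrize the kernel does not help, as each Schur sum is then still only $O(T^{(b-b')/2})$. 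A half power of $T$ would in fact suffice for every application in this paper, but it does not prove the inequality as stated.

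The standard repair keeps your reduction and replaces the Schur test by a physical-space endpoint argument. For $0\le b<\tfrac12$, H\"older with exponents $(1/b,\,2/(1-2b))$ and the Sobolev embedding $H^{b}(\R)\hookrightarrow L^{2/(1-2b)}(\R)$ give
\begin{equation*}
\|\eta(\cdot/T)g\|_{L^2}\le\|\eta(\cdot/T)\|_{L^{1/b}}\,\|g\|_{L^{2/(1-2b)}}\lesssim T^{b}\|g\|_{H^{b}} ,
\end{equation*}
which is the case $b'=0\le b$; the case $b'\le b=0$ follows by duality, and the general case $-\tfrac12<b'\le b<\tfrac12$ by composing the two estimates with two nested cutoffs. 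If you want to stay entirely on the Fourier side, you need something sharper than an $L^1$-kernel test (e.g.\ interpolation between the $b'=b$ identity bound and the endpoint above), not Schur's lemma.
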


\section{Linear and bilinear Strichartz estimates}

\subsection{Linear strichartz estimates on $\mathbb R^2$}
First, we state a Strichartz estimate for the unitary group $\{e^{-t\partial_x\Delta}\}$ proved by Linares and Pastor (\textit{c.f.} Proposition 2.3 in \cite{LP}). 
\begin{proposition} \label{Strichartz} 
Let $0 \le \epsilon < \frac12$ and $0 \le \theta \le 1$. Assume that $(q,p)$ satisfy $p=\frac{2}{1-\theta}$ and $q=\frac{6}{\theta(2+\epsilon)}$. Then, we have that 
\begin{equation} \label{Strichartz1}
\|D_x^{\frac{\theta\epsilon}{2}}e^{-t\partial_x\Delta}\varphi\|_{L^q_tL^p_{xy}} \lesssim \|\varphi\|_{L^2}
\end{equation}
for all $\varphi \in L^2(\mathbb R^2)$.
\end{proposition}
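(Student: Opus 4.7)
The idea is to interpolate the full family of estimates from the two endpoints $\theta=0$ and $\theta=1$. The endpoint $\theta=0$ reduces to $\|e^{-t\partial_x\Delta}\varphi\|_{L^\infty_tL^2_{xy}}=\|\varphi\|_{L^2}$, which is immediate from Plancherel since the symbol $w(\xi,\mu)=\xi^3+\xi\mu^2$ is real. The real work is the other endpoint
\begin{equation*}
\|D_x^{\epsilon/2}e^{-t\partial_x\Delta}\varphi\|_{L^{q_\epsilon}_tL^\infty_{xy}}\lesssim\|\varphi\|_{L^2},\qquad q_\epsilon=\frac{6}{2+\epsilon},\ \ 0\le\epsilon<\tfrac12,
\end{equation*}
after which Riesz--Thorin interpolation against the trivial endpoint produces the claimed family with $(1/q,1/p,s)=(\theta/q_\epsilon,\theta/2,\theta\epsilon/2)$.

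By the standard $TT^\ast$ duality, this endpoint is equivalent to
\begin{equation*}
\Big\|\int_{\mathbb R}e^{-(t-s)\partial_x\Delta}D_x^{\epsilon}F(\cdot,s)\,ds\Big\|_{L^{q_\epsilon}_tL^\infty_{xy}}\lesssim\|F\|_{L^{q_\epsilon'}_tL^1_{xy}}.
\end{equation*}
Applying Minkowski in the spatial variables and the one-dimensional Hardy--Littlewood--Sobolev inequality in time, this in turn follows from the pointwise dispersive bound
\begin{equation*}
\|D_x^{\epsilon}e^{-t\partial_x\Delta}\delta_0\|_{L^\infty_{xy}}\lesssim|t|^{-(2+\epsilon)/3},
\end{equation*}
because the HLS exponents match: $1/q_\epsilon'-1/q_\epsilon=(1-\epsilon)/3=1-(2+\epsilon)/3$.

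To prove the kernel bound, I would first perform the parabolic rescaling $(\xi,\mu)\mapsto(|t|^{-1/3}\xi,|t|^{-1/3}\mu)$ to reduce to the case $t=1$, so the claim becomes a uniform bound in $(x',y')\in\mathbb R^2$ on
\begin{equation*}
I(x',y')=\int_{\mathbb R^2}e^{i(x'\xi+y'\mu+\xi^3+\xi\mu^2)}|\xi|^{\epsilon}\,d\xi\,d\mu.
\end{equation*}
The $\mu$-integral is an exact Fresnel integral for every fixed $\xi\ne 0$, producing the factor $\sqrt{\pi/|\xi|}\,e^{i\frac{\pi}{4}\mathrm{sgn}(\xi)}$ and the phase shift $-y'^2/(4\xi)$, which gives
\begin{equation*}
I(x',y')=\sqrt{\pi}\int_{\mathbb R}e^{i\frac{\pi}{4}\mathrm{sgn}(\xi)}|\xi|^{\epsilon-1/2}e^{i\Phi(\xi)}\,d\xi,\qquad\Phi(\xi)=x'\xi+\xi^3-\frac{y'^2}{4\xi}.
\end{equation*}
The decisive point is the uniform non-degeneracy $\Phi'''(\xi)=6+3y'^2/(2\xi^4)\ge 6$. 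A dyadic decomposition $|\xi|\sim N$ then yields, via van der Corput's lemma with the third derivative, the contribution $\lesssim N^{\epsilon-1/2}$ for $N\gtrsim 1$, and, via the trivial estimate by the amplitude, $\lesssim N^{\epsilon+1/2}$ for $N\lesssim 1$. Summing over dyadic $N\in 2^{\mathbb Z}$ converges precisely when $-1/2<\epsilon<1/2$, which is exactly the range appearing in the statement.

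The main obstacle is the kernel bound itself: the two-dimensional symbol $w(\xi,\mu)=\xi^3+\xi\mu^2$ has Hessian determinant $4t^2(3\xi^2-\mu^2)$, vanishing on the cone $\mu=\pm\sqrt{3}\,\xi$, so a direct two-dimensional stationary-phase argument would lose decay. Reducing the problem to the one-dimensional oscillatory integral with the non-vanishing third derivative $\Phi'''\ge 6$ is what rescues the bound, and the sharp threshold $\epsilon<1/2$ emerges cleanly from the dyadic summation at high frequencies; the remaining steps ($TT^\ast$, Hardy--Littlewood--Sobolev and Riesz--Thorin) are then routine.
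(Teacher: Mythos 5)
Your argument is correct and is essentially the standard proof of this estimate: the paper itself does not prove Proposition \ref{Strichartz} but cites Linares--Pastor \cite{LP}, whose proof proceeds exactly as you do, via the kernel bound $\|D_x^{\epsilon}e^{-t\partial_x\Delta}\delta_0\|_{L^\infty}\lesssim |t|^{-(2+\epsilon)/3}$ obtained by performing the exact Fresnel integral in $\mu$ and then applying van der Corput with the non-vanishing third derivative of the resulting one-dimensional phase, followed by $TT^\ast$, Hardy--Littlewood--Sobolev and interpolation. The only imprecision is the last step: since the operator $D_x^{\theta\epsilon/2}$ varies with $\theta$, plain Riesz--Thorin does not apply and one should invoke Stein's interpolation theorem for the analytic family $D_x^{z\epsilon/2}e^{-t\partial_x\Delta}$; this is harmless here because the purely imaginary powers $|\xi|^{iy\epsilon/2}$ only contribute polynomial growth in $y$ to the van der Corput constants.
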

Then, we obtain the following corollary in the context of Bourgain' spaces.
\begin{corollary} \label{Strichartzcoro}
We have that
\begin{equation} \label{Strichartzcoro1} 
\|u\|_{L^4_{xyt}} \lesssim \|u\|_{X^{0,\frac56+}}, 
\end{equation}
for all $u \in X^{0,\frac56+}$.
\end{corollary}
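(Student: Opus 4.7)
The plan is to combine Proposition \ref{Strichartz} with the standard transference principle and then interpolate. First, set $\epsilon = 0$ and $\theta = 3/5$ in Proposition \ref{Strichartz}; this gives $p = 2/(1-3/5) = 5$, $q = 6/((3/5)\cdot 2) = 5$, and vanishing derivative loss $\theta \epsilon / 2 = 0$. Proposition \ref{Strichartz} thus produces the scaling-admissible diagonal Strichartz estimate
\[
\|e^{-t\partial_x\Delta}\varphi\|_{L^5_{xyt}} \lesssim \|\varphi\|_{L^2}.
\]

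Next I apply the standard transference principle for Bourgain's spaces (cf.\ \cite{Gi}): any linear bound $\|e^{-t\partial_x\Delta}\varphi\|_Y \lesssim \|\varphi\|_{L^2}$, with $Y$ a function space translation-invariant in $t$, upgrades to $\|u\|_Y \lesssim \|u\|_{X^{0, b}}$ for every $b > 1/2$. Applied with $Y = L^5_{xyt}$ this yields $\|u\|_{L^5_{xyt}} \lesssim \|u\|_{X^{0, \frac12+}}$.

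Finally, I interpolate against the trivial identity $\|u\|_{L^2_{xyt}} = \|u\|_{X^{0,0}}$. The elementary $L^p$ interpolation inequality $\|u\|_{L^4} \le \|u\|_{L^2}^{1/6}\|u\|_{L^5}^{5/6}$ (corresponding to $\tfrac14 = \tfrac16 \cdot \tfrac12 + \tfrac56 \cdot \tfrac15$), combined with the monotonicity $\|u\|_{X^{0,0}} \le \|u\|_{X^{0,\frac56+}}$ and $\|u\|_{X^{0,\frac12+}} \le \|u\|_{X^{0,\frac56+}}$, yields
\[
\|u\|_{L^4_{xyt}} \lesssim \|u\|_{X^{0,\frac56+}}^{1/6}\|u\|_{X^{0,\frac56+}}^{5/6} = \|u\|_{X^{0,\frac56+}}.
\]

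The only step that requires any real care is the transference principle; the rest is pure bookkeeping with Hölder exponents. The useful numerological coincidence is that $L^5_{xyt}$ is the scaling-critical Strichartz exponent for ZK in two dimensions (since $\tfrac25 + \tfrac35 = 1$), which makes it a particularly natural endpoint for interpolation to reach the target $L^4_{xyt}$.
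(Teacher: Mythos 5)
Your argument is correct and follows essentially the same route as the paper: the $L^5_{xyt}$ Strichartz bound from Proposition \ref{Strichartz} with $\epsilon=0$, $\theta=3/5$, transference to $X^{0,\frac12+}$, and interpolation with $\|u\|_{L^2_{xyt}}=\|u\|_{X^{0,0}}$. In fact, since your interpolation step is Lyapunov's inequality for $L^p$ norms of the \emph{same} function, both factors are controlled by $\|u\|_{X^{0,\frac12+}}$, so you actually prove the stronger bound $\|u\|_{L^4_{xyt}}\lesssim\|u\|_{X^{0,\frac12+}}$, which of course implies \eqref{Strichartzcoro1}.
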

\begin{proof} Estimate \eqref{Strichartz1} in the case $\epsilon=0$ and $\theta=\frac35$ writes 
\begin{equation} \label{Strichartzcoro2}
\|e^{-t\partial_x\Delta}\varphi\|_{L^5_{xyt}} \lesssim \|\varphi\|_{L^2}
\end{equation}
for all $\varphi \in L^2(\mathbb R^2)$. A classical argument (see for example \cite{Gi}) yields 
\begin{displaymath}
\|u\|_{L^5_{xyt}} \lesssim \|u\|_{X^{0,\frac12+}},
\end{displaymath}
which implies estimate \eqref{Strichartzcoro1} after interpolation with Plancherel's identity $\|u\|_{L^2_{xyt}}=\|u\|_{X^{0,0}}$.
\end{proof}

In \cite{CKZ}, Carbery, Kenig and Ziesler proved an optimal $L^4$-restriction theorem for
homogeneous polynomial hypersurfaces in $\mathbb R^3$.
\begin{theorem} \label{CKZ}
Let $\Gamma(\xi,\mu)=(\xi,\mu,\Omega(\xi,\mu))$, where
$\Omega(\xi,\mu)$ is a polynomial, homogeneous of degree $d \ge 2$.
Then there exists a positive constant $C$ (depending on $\phi$) such
that
\begin{equation} \label{CKZ1}
\Big(\int_{\mathbb R^2}|\widehat{f}(\Gamma(\xi,\mu))|^2
|K_{\Omega}(\xi,\mu) |^{\frac14} d\xi d\mu\Big)^{\frac12} \le C
\|f\|_{L^{4/3}},
\end{equation}
for all $f \in L^{4/3}(\mathbb R^3)$ and where
\begin{equation} \label{CKZ2}
|K_{\Omega}(\xi,\mu) |= \big| \det \text{Hess} \, \Omega(\xi,\mu)
\big|.
\end{equation}
\end{theorem}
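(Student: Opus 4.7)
The plan is to reduce Theorem \ref{CKZ} to an equivalent \emph{extension} estimate via duality, then to exploit the homogeneity of $\Omega$ together with a dyadic decomposition based on the vanishing locus of $K_\Omega$. By standard duality, \eqref{CKZ1} is equivalent to the adjoint bound
\begin{equation*}
\Big\| \int_{\mathbb R^2} e^{i(x\xi + y\mu + t\Omega(\xi,\mu))}\, g(\xi,\mu)\, |K_\Omega(\xi,\mu)|^{1/4} \,d\xi\,d\mu \Big\|_{L^4(\mathbb R^3)} \lesssim \|g\|_{L^2(\mathbb R^2)},
\end{equation*}
and squaring and applying the $TT^*$ method further reduces the claim to showing that convolution with the weighted surface measure $d\sigma = |K_\Omega(\xi,\mu)|^{1/2}\,\delta(\tau-\Omega(\xi,\mu))\,d\xi\,d\mu\,d\tau$ maps $L^{4/3}(\mathbb R^3)$ into $L^4(\mathbb R^3)$.

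Since $\Omega$ is homogeneous of degree $d$, its Hessian determinant $K_\Omega$ is homogeneous of degree $2(d-2)$, and a direct scaling check shows that the reduced estimate is invariant under the dyadic dilations $(\xi,\mu)\mapsto(2^k\xi,2^k\mu)$. It therefore suffices to establish the bound for inputs Fourier-supported in a fixed annulus $\{|(\xi,\mu)|\sim 1\}$ with constants uniform in $k$, and then sum. On this compact region I would perform a second dyadic decomposition, setting
\begin{equation*}
A_j = \{(\xi,\mu) : |(\xi,\mu)|\sim 1,\ |K_\Omega(\xi,\mu)|\sim 2^{-j}\}.
\end{equation*}
On each $A_j$ both principal curvatures are bounded below, so a stationary-phase computation combined with a Stein--Tomas-type argument yields an $L^2 \to L^4$ extension estimate for the piece $d\sigma_j$ whose operator norm grows like $2^{j/8}$; multiplied by the factor $|K_\Omega|^{1/4}\sim 2^{-j/4}$ absorbed into the measure, each piece contributes a bound \emph{uniform} in $j$. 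This cancellation is precisely the reason for the exponent $1/4$ in \eqref{CKZ1}, and it can also be checked directly against the Knapp example.

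The main obstacle is then summing over the scales $j$, which range over an unbounded set whenever $K_\Omega$ has a non-trivial zero set inside the annulus. The triangle inequality is too lossy; one needs an almost-orthogonality argument in physical space, exploiting the fact that $\widehat{d\sigma_j}$ is essentially concentrated on thin tubes whose aperture and direction are dictated by the principal directions of $\Omega$ on $A_j$. A bilinear Whitney-type decomposition in the spirit of Tao--Vargas--Vega, combined with the well-known invariance of the \emph{affine surface measure} $|K_\Omega|^{1/4}\,d\xi\,d\mu$ under unimodular affine transformations, should provide the decoupling needed to sum the pieces and close the proof.
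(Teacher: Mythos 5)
You should first be aware that the paper does not prove this statement at all: Theorem \ref{CKZ} is quoted verbatim from Carbery, Kenig and Ziesler \cite{CKZ} and used as a black box (only its Corollary \ref{CKZcoro} is derived in the text, by the duality/Plancherel argument you begin with). So there is no internal proof to compare against, and your proposal must stand on its own as a proof of a nontrivial theorem from the literature. As such it has a genuine gap precisely where you acknowledge one: the summation over the degeneracy scales $j$. Each piece $A_j=\{|K_\Omega|\sim 2^{-j}\}$ contributing a \emph{uniform} bound is exactly the situation in which the triangle inequality diverges, and the sentence asserting that a bilinear Whitney decomposition \`a la Tao--Vargas--Vega ``should provide the decoupling needed'' is a restatement of the main difficulty, not an argument. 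The whole content of the Carbery--Kenig--Ziesler theorem is that this summation can be closed for an \emph{arbitrary} homogeneous polynomial $\Omega$, whose Hessian determinant may vanish to high order, change sign (the surface is hyperbolic where $K_\Omega<0$, as happens for the ZK symbol $\xi^3+\xi\mu^2$ with $K=4(3\xi^2-\mu^2)$), or vanish identically on rays; their proof exploits the algebraic structure of $K_\Omega$ on the unit circle rather than generic almost-orthogonality. Until that step is carried out, you have proved a bound for each fixed $j$ but not estimate \eqref{CKZ1}.

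Two further points need repair even in the part you do sketch. First, the dual formulation is off by a factor of two in the exponent: restriction $L^{4/3}\to L^2(|K_\Omega|^{1/4}\,d\xi\,d\mu)$ is equivalent to the extension bound with density $|K_\Omega|^{1/8}$ against an \emph{unweighted} $L^2(d\xi\,d\mu)$ norm (this is why Corollary \ref{CKZcoro} carries the multiplier $|K_\Omega(D)|^{1/8}$); with that correction the bookkeeping is extension constant $\sim 2^{j/8}$ against weight $2^{-j/8}$, giving the claimed uniform-in-$j$ bound rather than a convergent geometric series. Second, on $A_j$ it is false that ``both principal curvatures are bounded below'': only the product of the Hessian eigenvalues is pinned to size $2^{-j}$, and for a homogeneous $\Omega$ the set $A_j$ is a union of narrow angular sectors about the zero rays of $K_\Omega$ on which one eigenvalue may be $O(1)$ and the other $O(2^{-j})$, or the signature may be indefinite. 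The $L^2\to L^4$ estimate with constant $|{\det\mathrm{Hess}}|^{-1/8}$ does survive this anisotropy (as a rescaling of the model $\tau=\xi^2+\lambda\mu^2$ shows), but it is not a consequence of a curvature-nondegenerate Stein--Tomas argument and must be justified, e.g.\ via the $L^2$ bound on $(h\,d\sigma_j)*(h\,d\sigma_j)$. Finally, the reduction to a single annulus ``and then sum'' also needs a word: uniform bounds on disjoint annuli do not sum in an $L^{4/3}\to L^2$ estimate without a Littlewood--Paley square-function argument on the $f$ side, which is standard but should be stated.
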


As a consequence, we have the following corollary.
\begin{corollary} \label{CKZcoro}
Let $|K_{\Omega}(D)|^{\frac18}$ and $e^{it\Omega(D)}$ be the Fourier
multipliers associated to $|K_{\Omega}(\xi,\mu)|^{\frac18}$ and
$e^{it\Omega(\xi,\mu)}$, \textit{i.e.}
\begin{equation} \label{CKZcoro1}
\mathcal{F}_{xy}\Big(|K_{\Omega}(D)|^{\frac18}\varphi
\Big)(\xi,\mu)= |K_{\Omega}(\xi,\mu)
|^{\frac18}\mathcal{F}_{xy}(\varphi)(\xi,\mu)
\end{equation}
where $\big|K_{\Omega}(\xi,\mu) \big|$ is defined in \eqref{CKZ2},
and
\begin{equation} \label{CKZcoro2}
\mathcal{F}_{xy}\big(e^{it\Omega(D)}\varphi
\big)(\xi,\mu)=e^{it\Omega(\xi,\mu)}\mathcal{F}_{xy}(\varphi)(\xi,\mu).
\end{equation}
Then,
\begin{equation} \label{CKZcoro3}
\big\|
|K_{\Omega}(D)|^{\frac18}e^{it\Omega(D)}\varphi\big\|_{L^4_{xyt}}
\lesssim \|\varphi\|_{L^2},
\end{equation}
for all $\varphi \in L^2(\mathbb R^2)$.
\end{corollary}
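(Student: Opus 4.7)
The plan is to deduce \eqref{CKZcoro3} from the restriction estimate \eqref{CKZ1} by a standard $TT^*$/duality argument. Define the linear operator
\begin{displaymath}
T\varphi(x,y,t) := \bigl(|K_{\Omega}(D)|^{\frac18}e^{it\Omega(D)}\varphi\bigr)(x,y).
\end{displaymath}
Estimate \eqref{CKZcoro3} is the assertion that $T$ is bounded from $L^2(\mathbb R^2)$ to $L^4_{xyt}(\mathbb R^3)$. By duality this is equivalent to the boundedness of the adjoint
\begin{displaymath}
T^{\ast}:L^{4/3}(\mathbb R^3)\longrightarrow L^2(\mathbb R^2),
\end{displaymath}
so I would reduce the problem to showing $\|T^{\ast}f\|_{L^2(\mathbb R^2)}\lesssim \|f\|_{L^{4/3}(\mathbb R^3)}$ for $f\in\mathcal S(\mathbb R^3)$.

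Next I would identify $T^{\ast}$ explicitly. Pairing $T\varphi$ with $f$ and using Plancherel in the space variables,
\begin{displaymath}
\langle T\varphi,f\rangle_{L^2(\mathbb R^3)} = \int |K_{\Omega}(\xi,\mu)|^{\frac18}\widehat\varphi(\xi,\mu)\, \overline{\int e^{-it\Omega(\xi,\mu)}\mathcal F_{xy}(f)(\xi,\mu,t)\,dt}\;d\xi d\mu ,
\end{displaymath}
and recognizing the inner integral as the full space-time Fourier transform of $f$ evaluated on the hypersurface $\Gamma(\xi,\mu)=(\xi,\mu,\Omega(\xi,\mu))$, I obtain
\begin{displaymath}
\mathcal F_{xy}(T^{\ast}f)(\xi,\mu) = |K_{\Omega}(\xi,\mu)|^{\frac18}\,\widehat f(\Gamma(\xi,\mu)).
\end{displaymath}

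Now Plancherel on the left-hand side of $\|T^{\ast}f\|_{L^2(\mathbb R^2)}^2$ yields
\begin{displaymath}
\|T^{\ast}f\|_{L^2(\mathbb R^2)}^2 = \int_{\mathbb R^2}|K_{\Omega}(\xi,\mu)|^{\frac14}\,|\widehat f(\Gamma(\xi,\mu))|^2\,d\xi d\mu ,
\end{displaymath}
which is precisely the square of the left-hand side of \eqref{CKZ1}. Invoking Theorem \ref{CKZ} gives $\|T^{\ast}f\|_{L^2}\lesssim \|f\|_{L^{4/3}}$, and dualizing back produces \eqref{CKZcoro3}. There is no real obstacle beyond the bookkeeping of Fourier conventions; the content of the corollary is entirely encoded in the fact that the Carbery--Kenig--Ziesler inequality is the $TT^{\ast}$-dual of the desired Strichartz bound.
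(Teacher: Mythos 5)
Your proposal is correct and follows essentially the same route as the paper: the authors also dualize, reduce via Cauchy--Schwarz to the $L^{4/3}_{xyt}\to L^2_{xy}$ bound for the adjoint, compute that $\mathcal F_{xy}(T^{\ast}f)(\xi,\mu)=c\,|K_{\Omega}(\xi,\mu)|^{\frac18}\widehat f(\Gamma(\xi,\mu))$, and conclude by Plancherel together with estimate \eqref{CKZ1}. No meaningful difference.
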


\begin{proof} By duality, it suffices to prove that
\begin{equation} \label{CKZcoro4}
\int_{\mathbb
R^3}|K_{\Omega}(D)|^{\frac18}e^{it\Omega(D)}\varphi(x,y)
f(x,y,t)dxdydt \lesssim \|\varphi\|_{L^2_{xy}}\|f\|_{L^{4/3}_{xyt}}.
\end{equation}
The Cauchy-Schwarz inequality implies that it is enough to prove that
\begin{equation} \label{CKZcoro5}
\Big\| \int_{\mathbb R}|K_{\Omega}(D)
|^{\frac18}e^{-it\Omega(D)}f(x,y,t)dt\Big\|_{L^2_{xy}} \lesssim
\|f\|_{L^{4/3}_{xyt}}
\end{equation}
in order to prove estimate \eqref{CKZcoro4}. But straightforward computations give
\begin{displaymath}
\mathcal{F}_{x,y}\Big(\int_{\mathbb R}\big|K_{\Omega}(D)
\big|^{\frac18}e^{-it\Omega(D)}fdt \Big)(\xi,\mu)
=c|K_{\Omega}(\xi,\mu)
\big|^{\frac18}\mathcal{F}_{x,y,t}(f)(\xi,\mu,\Omega(\xi,\mu)),
\end{displaymath}
so that estimate \eqref{CKZcoro5} follows directly from Plancherel's identity and estimate \eqref{CKZ1}.
\end{proof}

Now, we apply Corollary \ref{CKZcoro} in the case of the unitary group $e^{-t\partial_x\Delta }$.
\begin{proposition} \label{Strichartzlin}
Let $|K(D)|^{\frac18}$ be the Fourier multiplier associated to
$|K(\xi,\mu) |^{\frac18}$ where
\begin{equation} \label{Strichartzlin1}
|K(\xi,\mu) |=|3\xi^2-\mu^2 |
\end{equation}
Then, we have that
\begin{equation} \label{Strichartzlin2}
\big\|
|K(D)|^{\frac18}e^{-t\partial_x\Delta}\varphi\big\|_{L^4_{xyt}}
\lesssim \|\varphi\|_{L^2}
\end{equation}
for all $\varphi \in L^2(\mathbb R^2)$, and
\begin{equation} \label{Strichartzlin3}
\big\| |K(D)|^{\frac18}u\big\|_{L^4_{xyt}} \lesssim
\|u\|_{X^{0,\frac12+}}
\end{equation}
for all $u \in X^{0,\frac12+}$.
\end{proposition}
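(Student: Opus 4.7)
The plan is to obtain Proposition \ref{Strichartzlin} as a direct specialization of Corollary \ref{CKZcoro} to the ZK phase function $\Omega(\xi,\mu) = w(\xi,\mu) = \xi^3 + \xi\mu^2$, followed by the standard transference from the free evolution to $X^{s,b}$.

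First, I would verify the hypothesis of Theorem \ref{CKZ} for $\Omega = w$. Indeed, $w$ is a polynomial which is homogeneous of degree $d=3 \ge 2$, so the theorem applies. I would then compute the Hessian determinant: since
\[
\mathrm{Hess}\, w(\xi,\mu) = \begin{pmatrix} 6\xi & 2\mu \\ 2\mu & 2\xi \end{pmatrix},
\]
one gets $\det \mathrm{Hess}\, w(\xi,\mu) = 12\xi^2 - 4\mu^2$, hence
\[
|K_\Omega(\xi,\mu)| = 4\,|3\xi^2 - \mu^2| = 4\,|K(\xi,\mu)|.
\]
The constant $4^{1/8}$ can be absorbed in the implicit constant, so $|K_\Omega(D)|^{1/8}$ and $|K(D)|^{1/8}$ differ only by a harmless multiplicative factor.

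Next, noting that $e^{-t\partial_x\Delta}\varphi$ is precisely $e^{it w(D)}\varphi$ in the notation of Corollary \ref{CKZcoro} (with $\Omega = w$), estimate \eqref{Strichartzlin2} follows immediately from \eqref{CKZcoro3}.

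Finally, for \eqref{Strichartzlin3}, I would invoke the standard transference principle for Bourgain spaces: any $u \in X^{0,1/2+}$ can be written, after a suitable decomposition in the modulation variable, as a superposition of modulated free solutions, and the Strichartz bound \eqref{Strichartzlin2} for the free evolution transfers to the $X^{0,1/2+}$ estimate \eqref{Strichartzlin3} since the operator $|K(D)|^{1/8}$ commutes with $e^{-t\partial_x\Delta}$ and acts only in the spatial frequencies. This is exactly the same mechanism invoked in the proof of Corollary \ref{Strichartzcoro}, so one may simply cite \cite{Gi}. There is no real obstacle here: the only substantive step is the Hessian computation, and everything else is a mechanical application of results already in hand.
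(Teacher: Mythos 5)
Your proposal is correct and follows exactly the paper's argument: compute $\det \operatorname{Hess} w(\xi,\mu)=4(3\xi^2-\mu^2)$, apply Corollary \ref{CKZcoro}, and absorb the constant; the passage to \eqref{Strichartzlin3} is the same classical transference already used for Corollary \ref{Strichartzcoro}. Nothing is missing.
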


\begin{proof} The symbol associated to $e^{-t\partial_x\Delta}$ is
given by $w(\xi,\mu)=\xi^3+\xi\mu^2$. After an easy computation, we
get that
\begin{displaymath}
\det \text{Hess} \, w(\xi,\mu)=4(3\xi^2-\mu^2).
\end{displaymath}
Estimate \eqref{Strichartzlin2} follows then as a direct application
of Corollary \ref{CKZcoro}.

\end{proof}

\begin{remark} It follows by applying estimate \eqref{Strichartz1} with $\epsilon=1/2-$ and $\theta=2/3+$ that
\begin{displaymath}
\|D_x^{\frac16}e^{-t\partial_x\Delta}\varphi\|_{L^{6-}_{xyt}} \lesssim
\|\varphi\|_{L^2},
\end{displaymath}
for all $\varphi \in L^2(\mathbb R^2)$, which implies in the context
of Bourgain's spaces (after interpolating with the trivial estimate
$\|u\|_{L^2_{xyt}}=\|u\|_{X^{0,0}}$) that
\begin{equation} \label{Strichartzlinremark}
\|D_x^{\frac18}u\|_{L^4_{xyt}} \lesssim \|u\|_{X^{0,\frac38+}},
\end{equation}
for all $u \in X^{0,\frac38+}$.

Estimate \eqref{Strichartzlin3} can be viewed as an improvement of
estimate \eqref{Strichartzlinremark}, since outside of the lines
$|\xi|=\frac1{\sqrt{3}}|\mu|$, it allows to recover $1/4$ of
derivatives instead of $1/8$ of derivatives in $L^4$.
\end{remark}

\begin{remark}
it is interesting to observe that the resonance function $H$ defined
in \eqref{Resonance2} cancels out on the planes
$(\xi_1=-\frac{\mu_1}{\sqrt{3}},\xi_2=\frac{\mu_2}{\sqrt{3}})$ and
$(\xi_1=\frac{\mu_1}{\sqrt{3}},\xi_2=-\frac{\mu_2}{\sqrt{3}})$.
\end{remark}

\subsection{Bilinear Strichartz estimates}

In this subsection, we prove the following crucial bilinear
estimates related to the ZK dispersion relation for functions
defined on $\mathbb R^3$ and $\mathbb R \times \mathbb T \times
\mathbb R$.

\begin{proposition} \label{BilinStrichartzI}
Let $N_1, \ N_2, \ L_1, \ L_2$ be dyadic numbers in $\{ 2^k  :  k \in \mathbb N^{\star} \} \cup \{1\}$.
Assume that $u_1$ and $u_2$ are two functions
in $L^2(\mathbb R^3)$ or $L^2(\mathbb R \times \mathbb T \times \mathbb R)$. Then,
\begin{equation} \label{BilinStrichartzI0}
\begin{split}
\|(P_{N_1}Q_{L_1}u_1)&(P_{N_2}Q_{L_2}u_2)\|_{L^2} \\ & \lesssim
(L_1 \wedge L_2)^{\frac12}(N_1 \wedge N_2)\|P_{N_1}Q_{L_1}u_1\|_{L^2}
\|P_{N_2}Q_{L_2}u_2\|_{L^2}
\end{split}
\end{equation}
Assume moreover that $N_2 \ge
4N_1$ or $N_1 \ge 4N_2$. Then,
\begin{equation} \label{BilinStrichartzI1}
\begin{split}
\|(P_{N_1}&Q_{L_1}u_1) (P_{N_2}Q_{L_2}u_2)\|_{L^2} \\ & \lesssim
\frac{(N_1 \wedge N_2)^{\frac12}}{N_1 \vee N_2}(L_1 \vee
L_2)^{\frac12}(L_1 \wedge L_2)^{\frac12}\|P_{N_1}Q_{L_1}u_1\|_{L^2}
\|P_{N_2}Q_{L_2}u_2\|_{L^2}.
\end{split}
\end{equation}
\end{proposition}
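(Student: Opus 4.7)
The plan is to pass to the Fourier side and reduce everything to bounding the measure of a certain sublevel set $E(\zeta)$, in the standard way for bilinear Strichartz estimates. Set $f_i := \widehat{P_{N_i} Q_{L_i} u_i}$, so that each $f_i$ is supported in
\[
A_i = \{(\xi, \mu, \tau) : |(\xi, \mu)| \sim N_i,\ |\tau - w(\xi, \mu)| \lesssim L_i\};
\]
Plancherel's identity gives $\|(P_{N_1}Q_{L_1}u_1)(P_{N_2}Q_{L_2}u_2)\|_{L^2} = \|f_1 * f_2\|_{L^2}$. I would apply Cauchy--Schwarz in the convolution variable to get the pointwise bound
\[
|(f_1 * f_2)(\zeta)|^2 \le |E(\zeta)|\,(|f_1|^2 * |f_2|^2)(\zeta), \qquad E(\zeta) := \{\zeta_1 : \zeta_1 \in A_1,\ \zeta - \zeta_1 \in A_2\},
\]
integrate in $\zeta$ and invoke Fubini on the right-hand side to obtain
\[
\|f_1 * f_2\|_{L^2}^2 \le \big(\sup_{\zeta} |E(\zeta)|\big)\, \|f_1\|_{L^2}^2 \|f_2\|_{L^2}^2.
\]
All that remains is to bound $\sup_\zeta |E(\zeta)|$ in each case.

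For \eqref{BilinStrichartzI0} I would just exploit independent constraints. Writing $\zeta = (\xi,\mu,\tau)$ and $\zeta_1 = (\xi_1, \mu_1, \tau_1)$, the two annular conditions $|(\xi_1, \mu_1)| \sim N_1$ and $|(\xi - \xi_1, \mu - \mu_1)| \sim N_2$ confine $(\xi_1, \mu_1)$ to a planar region of area $\lesssim (N_1 \wedge N_2)^2$ (the smaller annulus dictates the area). The two modulation constraints each localize $\tau_1$ to an interval of respective lengths $\sim L_1$ and $\sim L_2$, so $\tau_1$ lies in a set of measure $\lesssim L_1 \wedge L_2$. Together these give $|E(\zeta)| \lesssim (N_1 \wedge N_2)^2 (L_1 \wedge L_2)$, hence \eqref{BilinStrichartzI0} after taking square roots.

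For the refined estimate \eqref{BilinStrichartzI1} I would assume WLOG $N_2 \ge 4N_1$ and $L_1 \le L_2$. The modulation constraints still confine $\tau_1$ to an interval of length $L_1 \wedge L_2 = L_1$, but combining them now also forces
\[
|\Phi_\zeta(\xi_1, \mu_1)| \lesssim L_2, \qquad \Phi_\zeta(\xi_1, \mu_1) := \tau - w(\xi_1, \mu_1) - w(\xi - \xi_1, \mu - \mu_1).
\]
The key identity, obtained by direct differentiation together with the convention $|(a,b)| = \sqrt{3a^2 + b^2}$ from Section \ref{notation}, is
\[
\partial_{\xi_1} \Phi_\zeta = \bigl(3(\xi-\xi_1)^2 + (\mu-\mu_1)^2\bigr) - \bigl(3\xi_1^2 + \mu_1^2\bigr) = |(\xi - \xi_1, \mu - \mu_1)|^2 - |(\xi_1, \mu_1)|^2,
\]
which under the high--low hypothesis $N_2 \ge 4N_1$ has magnitude $\gtrsim N_2^2$. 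For each fixed $\mu_1$ with $|\mu_1| \lesssim N_1$ (or each integer $q_1$ with $|q_1| \lesssim N_1$ in the $\mathbb R \times \mathbb T$ setting), the $\xi_1$-section of $\{|\Phi_\zeta| \lesssim L_2\}$ then has length $\lesssim L_2 / N_2^2$; integrating (resp.\ summing) over the $\lesssim N_1$ admissible values of $\mu_1$ (resp.\ $q_1$) produces a planar measure $\lesssim N_1 L_2 / N_2^2$. Multiplying by the $L_1$-length available for $\tau_1$ yields $|E(\zeta)| \lesssim N_1 L_1 L_2 / N_2^2$, which is precisely \eqref{BilinStrichartzI1} after square roots.

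The main obstacle is essentially just recognizing that a single-variable slicing in $\xi_1$, combined with the identity above for $\partial_{\xi_1}\Phi_\zeta$, already delivers the optimal gain $1/(N_1 \vee N_2)$; the authors' choice of the weighted norm $|(\xi, \mu)| = \sqrt{3\xi^2 + \mu^2}$ is what makes this look like a triviality rather than a computation on the resonance function $H$, and it is also what allows the same argument to transfer verbatim to $\mathbb R \times \mathbb T$, since the $\mu_1$-direction is only ever touched through the crude counting bound.
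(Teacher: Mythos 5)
Your proposal is correct and follows essentially the same route as the paper: the Cauchy--Schwarz/Plancherel reduction to the sublevel set, the trivial volume count for \eqref{BilinStrichartzI0}, and for \eqref{BilinStrichartzI1} the observation that $\partial_{\xi_1}\Phi_\zeta=|(\xi-\xi_1,\mu-\mu_1)|^2-|(\xi_1,\mu_1)|^2\gtrsim (N_1\vee N_2)^2$ (the paper phrases this as $\partial H/\partial\xi_1$ for the resonance function), followed by the mean-value-theorem slicing in $\xi_1$ and Fubini over $\mu_1$ (the paper's Lemmas \ref{basicII} and \ref{basicI}). No gaps.
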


\begin{remark} Estimate \eqref{BilinStrichartzI1}  will be very useful to control the
high-low frequency interactions in the nonlinear term of \eqref{ZK}.
\end{remark}

In the proof of Proposition \ref{BilinStrichartzI}  we will need some basic Lemmas stated in
\cite{MST}.

\begin{lemma} \label{basicI}
Consider a set $\Lambda \subset \mathbb R \times X$, where
$X=\mathbb R$ or $\mathbb T$. Let the projection on the $\mu$ axis
be contained in a set $I \subset \mathbb R$. Assume in addition that
there exists $C>0$ such that for any fixed $\mu_0 \in I \cap X$, $|\Lambda
\cap \{(\xi,\mu_0) \ : \ \mu_0 \in X\}| \le C$. Then, we get that 
$|\Lambda| \le C |I|$ in the case where $X=\mathbb R$ and $|\Lambda| \le C( |I|+1)$ 
in the case where $X=\mathbb T$.
\end{lemma}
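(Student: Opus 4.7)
The statement is essentially a Fubini/Tonelli-type slicing estimate combined with, in the periodic case, a trivial bound on the number of integers in an interval. My plan is to treat the two cases in parallel.

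First I would make precise the ambient measure. When $X=\mathbb R$, the set $\Lambda$ lives in $\mathbb R^2$ with Lebesgue measure. When $X=\mathbb T$, the relevant Fourier variable $\mu$ is the discrete dual $\mathbb Z$ (as in the definition \eqref{Bourgainper} of the $X^{s,b}$ norm on $\mathbb R\times\mathbb T$), so $\Lambda\subset\mathbb R\times\mathbb Z$ is equipped with the product of Lebesgue measure on the $\xi$-axis and counting measure on the $\mu$-axis. In either case let $\Lambda_{\mu_0}:=\{\xi\in\mathbb R:(\xi,\mu_0)\in\Lambda\}$ denote the horizontal slice, so that the hypothesis reads $|\Lambda_{\mu_0}|\le C$ for every $\mu_0\in I\cap X$, while $\Lambda_{\mu_0}=\emptyset$ whenever $\mu_0\notin I$.

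In the case $X=\mathbb R$, Tonelli's theorem gives
\begin{equation*}
|\Lambda|=\int_{\mathbb R}|\Lambda_{\mu_0}|\,d\mu_0=\int_{I}|\Lambda_{\mu_0}|\,d\mu_0\le C\int_I d\mu_0=C|I|,
\end{equation*}
which is the first claim. In the case $X=\mathbb T$ the same slicing identity yields
\begin{equation*}
|\Lambda|=\sum_{\mu_0\in\mathbb Z}|\Lambda_{\mu_0}|=\sum_{\mu_0\in\mathbb Z\cap I}|\Lambda_{\mu_0}|\le C\,\#\{\mathbb Z\cap I\},
\end{equation*}
so the matter reduces to bounding the number of integers contained in $I$. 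Since in all the intended applications $I$ is (contained in) an interval of length $|I|$ — indeed the projection of a dyadic annulus $I_N$ onto the $\mu$-axis — the elementary estimate $\#(\mathbb Z\cap I)\le |I|+1$ closes the argument.

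There is no real obstacle here; the only subtlety worth flagging in the write-up is the convention that in the periodic setting $|\Lambda|$ denotes the product of Lebesgue and counting measures, and the fact that the $+1$ in the periodic bound comes precisely from the boundary integers that may belong to $I$ without being accounted for by $|I|$ alone (e.g.\ if $|I|<1$, the bound $C|I|$ may fail since $I$ can still contain one integer). If $I$ were an arbitrary measurable set rather than an interval one would need a different argument, but as used in the sequel $I$ is always an interval and the claim is exactly what is needed.
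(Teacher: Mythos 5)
Your proof is correct: the slicing (Tonelli) argument in the $\mathbb R$ case together with the elementary bound $\#(\mathbb Z\cap I)\le |I|+1$ in the periodic case is exactly the standard argument, and the paper itself states this lemma without proof (quoting it from the reference \cite{MST}, where it is established in the same way). Your remark that the periodic bound really requires $I$ to be an interval (as it always is in the applications, being the projection of a dyadic annulus) is a sensible clarification of the statement as written.
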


The second one is a direct consequence of the mean value theorem.
\begin{lemma} \label{basicII}
Let $I$ and $J$ be two intervals on the real line and $f:J
\rightarrow \mathbb R$ be a smooth function. Then,
\begin{equation} \label{basicII1}
\text{mes} \, \{x \in J \ : \ f(x) \in I\} \le \frac{|I|}{\inf_{\xi
\in J}|f'(\xi)|}.
\end{equation}
\end{lemma}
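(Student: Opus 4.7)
The plan is to reduce to the case where $f'$ does not vanish on $J$, note that this forces $f$ to be strictly monotone, and then apply the mean value theorem to the extreme points of the preimage set.

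First, set $m := \inf_{\xi \in J}|f'(\xi)|$. If $m = 0$, the right-hand side $|I|/m$ is understood as $+\infty$ and there is nothing to prove, so I may assume $m > 0$. Since $f$ is smooth, $f'$ is continuous on $J$ and, by assumption, never vanishes there. By the intermediate value theorem, $f'$ must then keep a constant sign on $J$, so $f$ is strictly monotone on $J$. In particular, $f$ is injective and preimages of intervals are intervals.

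Consequently, the set $E := \{x \in J : f(x) \in I\} = J \cap f^{-1}(I)$ is an interval. Write $a := \inf E$ and $b := \sup E$, so that $\operatorname{mes} E = b - a$. For any two points $x_1, x_2 \in E$ with $x_1 < x_2$, the mean value theorem applied to $f$ on $[x_1,x_2] \subset J$ produces some $\xi \in (x_1, x_2)$ with
\begin{equation*}
f(x_2) - f(x_1) = f'(\xi)(x_2 - x_1).
\end{equation*}
Since $f(x_1), f(x_2) \in \overline{I}$, the left-hand side is bounded in absolute value by $|I|$, and the denominator satisfies $|f'(\xi)| \ge m$. Hence $|x_2 - x_1| \le |I|/m$.

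Letting $x_1 \downarrow a$ and $x_2 \uparrow b$ along sequences inside $E$ (using monotonicity of $f$ to ensure such sequences exist) gives $b - a \le |I|/m$, which is the claimed bound. The only real step is the constancy-of-sign observation that allows one to treat $E$ as a single interval; the rest is the mean value theorem applied exactly as announced.
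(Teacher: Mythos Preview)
Your proof is correct and is exactly the argument the paper has in mind: the lemma is stated there as ``a direct consequence of the mean value theorem,'' with no further details given, and your write-up supplies precisely that consequence. The only superfluous remark is the parenthetical about monotonicity ensuring the approximating sequences exist; such sequences exist simply because $a=\inf E$ and $b=\sup E$, independently of any monotonicity.
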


In the case where $f$ is a polynomial of degree $3$, we also have the following result.
\begin{lemma} \label{basicIII}
Let $a \neq 0, \ b, \ c$ be real numbers and $I$ be an interval on
the real line. Then,
\begin{equation} \label{basicIII1}
\text{mes} \, \{x \in J \ : \ ax^2+bx+c \in I\} \lesssim
\frac{|I|^{\frac12}}{|a|^{\frac12}}.
\end{equation}
and
\begin{equation} \label{basicIII2}
\# \{q \in \mathbb Z  \ : \ aq^2+bq+c \in I\} \le
\frac{|I|^{\frac12}}{|a|^{\frac12}}+1.
\end{equation}
\end{lemma}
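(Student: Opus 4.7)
The plan is elementary: complete the square in the polynomial and reduce both assertions to a basic estimate on the set $\{y\in\mathbb R : y^2 \in K\}$ for an interval $K$ of length $|I|/|a|$.

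First I would write
\[
ax^2+bx+c \;=\; a(x-r)^2 + d, \qquad r := -\tfrac{b}{2a}, \quad d := c - \tfrac{b^2}{4a},
\]
so that the condition $ax^2+bx+c \in I$ is equivalent to $(x-r)^2 \in K$, where $K := (I-d)/a$ is an interval of length $|K|=|I|/|a|$. The affine change of variable $y = x-r$ preserves Lebesgue measure and sends $\mathbb Z$ to the translated lattice $\mathbb Z - r$, so the two claims reduce to the corresponding estimates for $\{y\in\mathbb R : y^2 \in K\}$ and for its intersection with $\mathbb Z - r$.

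Next, if $K \cap [0,\infty) = \emptyset$ the set is empty and there is nothing to prove, so we may assume $K \cap [0,\infty) = [\alpha,\beta]$ with $0 \le \alpha \le \beta$ and $\beta-\alpha \le |K|$. Then
\[
\{y\in\mathbb R : y^2 \in K\} \;=\; [-\sqrt{\beta},-\sqrt{\alpha}] \cup [\sqrt{\alpha},\sqrt{\beta}],
\]
a union of at most two intervals of common length $\sqrt{\beta}-\sqrt{\alpha} \le \sqrt{\beta-\alpha} \le |K|^{1/2}$, by concavity of $\sqrt{\cdot}$. The total Lebesgue measure is therefore at most $2|K|^{1/2} = 2|I|^{1/2}/|a|^{1/2}$, which yields \eqref{basicIII1}. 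For \eqref{basicIII2}, each of the two subintervals contains at most $|K|^{1/2}+1$ points of the translated lattice $\mathbb Z - r$; summing over the two branches and absorbing the additive constants into a single $+1$ term delivers the claimed bound $|I|^{1/2}/|a|^{1/2}+1$ up to an absolute constant.

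The only point of care is the case where $K$ straddles $0$: then the two half-intervals above meet at $y=0$, but the same bounds hold and are in fact strictly sharper, so the generic estimate subsumes this edge case. An alternative route via Lemma \ref{basicII} applied to $f'(x) = 2ax+b$ (which has $|f'(x)| \ge 2|a|\,|x-r|$) would produce the same measure bound through a monotonicity argument on each side of the vertex $x=r$, but the completion-of-squares approach is both shorter and treats the measure and counting statements uniformly. There is no real obstacle; the essential input is the concavity inequality $\sqrt{\beta}-\sqrt{\alpha} \le \sqrt{\beta-\alpha}$.
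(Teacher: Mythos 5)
Your proof is correct, and since the paper does not actually prove Lemma \ref{basicIII} (it is quoted as a basic lemma from \cite{MST}), there is no in-text argument to compare with; completing the square and using the subadditivity $\sqrt{\beta}-\sqrt{\alpha}\le\sqrt{\beta-\alpha}$ is the standard elementary route, and the alternative you sketch (splitting at the vertex $x=r$ and applying the mean-value bound of Lemma \ref{basicII} on each monotone branch) is the other usual one, with the same outcome. One point worth making explicit: what your argument yields for the counting statement is $\#\{q\in\mathbb Z\,:\,aq^2+bq+c\in I\}\le 2\bigl(|I|^{1/2}|a|^{-1/2}+1\bigr)$, not the literal inequality \eqref{basicIII2} with constant $1$. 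This is not a gap in your proof but a (harmless) slip in the statement itself: taking $a=1$, $b=c=0$ and $I=[0,N^2]$ gives $2N+1$ integer solutions against the claimed bound $N+1$, so a factor $2$ coming from the two monotone branches of the parabola is unavoidable. Since every use of \eqref{basicIII2} in the paper (e.g. in the derivation of \eqref{BilinRT.13} and \eqref{BilinRT.29}) only invokes the bound up to a multiplicative constant, your version, which you correctly flag as ``up to an absolute constant'', is exactly what is needed, and the same remark applies to the implicit constant $2$ in your proof of \eqref{basicIII1}, which is stated with $\lesssim$ anyway.
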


\begin{proof}[Proof of Proposition \ref{BilinStrichartzI}]
We prove estimates \eqref{BilinStrichartzI0}--\eqref{BilinStrichartzI1} in the case where $(x,y,t) \in \mathbb R^3$. The case $(x,y,t) \in \mathbb R \times \mathbb T \times \mathbb R$ follows in a similar way. The
Cauchy-Schwarz inequality and Plancherel's identity yield
\begin{equation} \label{BilinStrichartzI3}
\begin{split}
\|(P_{N_1}Q_{L_1}u_1)(P_{N_2}&Q_{L_2}u_2)\|_{L^2} \\ &=
\|(P_{N_1}Q_{L_1}u_1)^{\wedge} \star (P_{N_2}Q_{L_2}u_2)^{\wedge}\|_{L^2} \\ & \lesssim
\sup_{(\xi,\mu,\tau) \in \mathbb R^3}|A_{\xi\,\mu,\tau} |^{\frac12}
\|P_{N_1}Q_{L_1}u_1\|_{L^2}\|P_{N_2}Q_{L_2}u_2\|_{L^2},
\end{split}
\end{equation}
where
\begin{displaymath}
\begin{split}
A_{\xi,\mu,\tau}=&\Big\{  (\xi_1,\mu_1,\tau_1) \in \mathbb R^3 \ : \
 |(\xi_1,\mu_1)| \in I_{N_1}, \
|(\xi-\xi_1,\mu-\mu_1)| \in I_{N_2} \\ & \quad \quad
|\tau_1-w(\xi_1,\mu_1)| \in I_{L_1}, \
|\tau-\tau_1-w(\xi-\xi_1,\mu-\mu_1)| \in I_{L_2}\Big\} \ .
\end{split}
\end{displaymath}
it remains then to estimate the measure of the set
$A_{\xi,\mu,\tau}$ uniformly in $(\xi,\mu,\tau) \in \mathbb R^3$. 

To obtain \eqref{BilinStrichartzI0}, we use the trivial estimate
\begin{displaymath} 
|A_{\xi\,\mu,\tau} | \lesssim (L_1 \wedge L_2)(N_1 \wedge N_2)^2, 
\end{displaymath} 
for all $(\xi,\mu,\tau) \in \mathbb R^3$.

Now we turn to the proof of estimate \eqref{BilinStrichartzI1}. First, we get easily from the triangle inequality that 
\begin{equation} \label{BilinStrichartzI4}
|A_{\xi\,\mu,\tau} | \lesssim (L_1 \wedge L_2) |B_{\xi\,\mu,\tau} |,
\end{equation}
where 
\begin{equation}  \label{BilinStrichartzI40}
\begin{split}
B_{\xi,\mu,\tau}=&\Big\{  (\xi_1,\mu_1) \in \mathbb R^2 \ : \
 |(\xi_1,\mu_1)| \in I_{N_1}, \
|(\xi-\xi_1,\mu-\mu_1)| \in I_{N_2} \\ & \quad \quad
|\tau-w(\xi,\mu) -H(\xi_1,\xi-\xi_1,\mu_1,\mu-\mu_1)| \lesssim L_1 \vee L_2\Big\} 
\end{split}
\end{equation}
and $H(\xi_1,\xi_2,\mu_1,\mu_2)$ is the resonance function defined in \eqref{Resonance2}. Next, we observe from the hypotheses on the daydic numbers $N_1$ and $N_2$ that 
\begin{displaymath} 
\Big| \frac{\partial H}{\partial \xi_1}(\xi_1,\xi-\xi_1,\mu_1,\mu-\mu_1)\Big|=
\big|3\xi_1^2+\mu_1^2-(3(\xi-\xi_1)^2+(\mu-\mu_1)^2) \big| \gtrsim (N_1 \vee N_2)^2 \  .
\end{displaymath}
Then, if we define
$B_{\xi,\mu,\tau}(\mu_1) =\{ \xi_1 \in \mathbb R \ : \ (\xi_1,\mu_1) \in B_{\xi,\mu,\tau}\}$, we deduce applying estimate \eqref{basicII1} that 
\begin{displaymath}  
|B_{\xi,\mu,\tau}(\mu_1) | \lesssim \frac{L_1 \vee L_2}{(N_1 \vee N_2)^2} \ ,
\end{displaymath}
for all $\mu_1 \in \mathbb R$. Thus, it follows from Lemma \ref{basicI} that 
\begin{equation} \label{BilinStrichartzI5} 
|B_{\xi,\mu,\tau} | \lesssim \frac{N_1 \wedge N_2}{(N_1 \vee N_2)^2} (L_1 \wedge L_2 )\ .
\end{equation}
Finally, we conclude the proof of estimate \eqref{BilinStrichartzI1} gathering estimates \eqref{BilinStrichartzI3}--\eqref{BilinStrichartzI5}. 
\end{proof}

\section{Bilinear estimate in $\mathbb R \times \mathbb R$}

The main result of this section is stated below. 
\begin{proposition} \label{BilinR2}
Let $s>\frac12$. Then, there exists $\delta>0$ such that 
\begin{equation} \label{BilinR2.1}
\|\partial_x(uv)\|_{X^{s,-\frac12+2\delta}} \lesssim \|u\|_{X^{s,\frac12+\delta}}\|v\|_{X^{s,\frac12+\delta}},
\end{equation}
for all $u, \ v: \mathbb R^3 \rightarrow \mathbb R$ such that $u, \ v \in X^{s,\frac12+\delta}$.
\end{proposition}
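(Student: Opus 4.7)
My plan is to dualize Proposition~\ref{BilinR2}, so that \eqref{BilinR2.1} becomes equivalent to the trilinear estimate
$$\Bigl|\iiint_{\R^3}\partial_x(uv)\,w\,dx\,dy\,dt\Bigr| \lesssim \|u\|_{X^{s,1/2+\delta}}\|v\|_{X^{s,1/2+\delta}}\|w\|_{X^{-s,1/2-2\delta}}.$$
I would then decompose each of $u$, $v$, $w$ into Littlewood--Paley--modulation pieces $u_{N_1,L_1}=P_{N_1}Q_{L_1}u$, $v_{N_2,L_2}=P_{N_2}Q_{L_2}v$, $w_{N,L}=P_NQ_Lw$, so the problem reduces to summing dyadic contributions
$$\sum_{\text{dyadic}} N\,\Bigl|\iiint u_{N_1,L_1}\,v_{N_2,L_2}\,w_{N,L}\Bigr|$$
against the weights $N_1^sL_1^{1/2+\delta}$, $N_2^sL_2^{1/2+\delta}$, $N^{-s}L^{1/2-2\delta}$ inherited from the $X$-norms, where the prefactor $N$ absorbs the derivative $\partial_x$ using $|\xi|\le|(\xi,\mu)|\lesssim N$. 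Fourier support forces $N \lesssim N_1 \vee N_2$, and by symmetry I may take $N_1 \geq N_2$.

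The core of the argument is a case analysis on the frequency configuration. In the \emph{high--low} regime $N_1 \gg N_2$ (Fourier support then forces $N \sim N_1$), I would apply Cauchy--Schwarz in the pairing
$\int u_{N_1,L_1}v_{N_2,L_2}w_{N,L} \leq \|u_{N_1,L_1}v_{N_2,L_2}\|_{L^2}\|w_{N,L}\|_{L^2}$
and control the product via the refined bilinear Strichartz estimate \eqref{BilinStrichartzI1}: the $N_2^{1/2}/N_1$ gain cancels the derivative loss and leaves an effective frequency sum $\sum_{N_2}N_2^{1/2-s}$, summable precisely for $s>\tfrac12$. The analogous subcase $N_1 \sim N_2 \gg N$ is treated by pairing $u_{N_1,L_1}$ with $w_{N,L}$ (still satisfying $N_1 \gg N$) via \eqref{BilinStrichartzI1}; summing $N$ up to $N_1$ and then $N_1$ again reduces to a convergent sum for $s>\tfrac12$. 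In both sub-cases the modulation sums close with a little room for small $\delta>0$ by a standard Cauchy--Schwarz in $L, L_1, L_2$, together with the resonance inequality $\max(L,L_1,L_2)\gtrsim |H(\xi_1,\xi-\xi_1,\mu_1,\mu-\mu_1)|$ at the borderline.

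The remaining configuration $N_1 \sim N_2 \sim N$ falls outside the hypotheses of \eqref{BilinStrichartzI1}, and is where the Carbery--Kenig--Ziesler--based estimate \eqref{Strichartzlin3} becomes essential. I would apply H\"older, $\|u_{N_1,L_1}v_{N_2,L_2}\|_{L^2}\leq \|u_{N_1,L_1}\|_{L^4}\|v_{N_2,L_2}\|_{L^4}$, and invoke \eqref{Strichartzlin3}. On the bulk region where $|K(\xi,\mu)|=|3\xi^2-\mu^2|\sim N_1^2$, one gets $\|u_{N_i,L_i}\|_{L^4}\lesssim N_1^{-1/4}L_i^{1/2+}\|u_{N_i,L_i}\|_{L^2}$; the resulting $N_1^{-1/2}$ gain, combined with the derivative loss $N\sim N_1$ and the Sobolev weights, leaves a residual $N_1^{1/2-s}$ summable for $s>\tfrac12$. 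To deal with the degenerate locus $|K|\ll N_1^2$, I would dyadically decompose in $|K|\sim M$ and balance the smaller CKZ gain $M^{1/8}$ against the small measure of the corresponding frequency shell (controlled by Lemma~\ref{basicIII}); in the strictly degenerate core $|K|=O(1)$ I would fall back on the basic bilinear estimate \eqref{BilinStrichartzI0} together with the resonance identity to transfer the remaining derivative loss into modulation weight.

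The hardest part, and the main obstacle I foresee, is precisely this degenerate subcase of $N_1 \sim N_2 \sim N$: the remark following Proposition~\ref{Strichartzlin} observes that the resonance function $H$ itself vanishes on the planes $\xi_j = \pm\mu_j/\sqrt 3$, exactly where $K$ degenerates, so neither the CKZ Strichartz nor the modulation-resonance trade-off on its own suffices to absorb the derivative loss when $s$ is close to $\tfrac12$. Closing the argument cleanly will require a careful simultaneous dyadic decomposition in $|K|$ and in the modulations, together with judicious use of \eqref{BilinStrichartzI0} on the thin frequency shells where $|K|$ is small.
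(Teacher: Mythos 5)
Your overall architecture (duality, dyadic decomposition in frequency and modulation, \eqref{BilinStrichartzI1} for the high--low and high--high--to--low interactions with an interpolation against \eqref{BilinStrichartzI0} to close the modulation sums, and the Carbery--Kenig--Ziesler estimate \eqref{Strichartzlin3} for the non-degenerate part of the high--high--to--high interaction) coincides with the paper's. But the degenerate part of the $High\times High\to High$ case, which you correctly single out as the crux, is left genuinely open: you propose to decompose dyadically in $|K|\sim M$ and trade the weaker CKZ gain $M^{1/8}$ against the measure of the shell $\{|3\xi^2-\mu^2|\lesssim M\}$, falling back on \eqref{BilinStrichartzI0} and ``the resonance identity'' on the core. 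This does not close. The flat estimate \eqref{BilinStrichartzI0} costs a full factor $N_1$, and on the planes $\xi_j=\pm\mu_j/\sqrt3$ the resonance $H$ itself vanishes, so there is no modulation gain to transfer the derivative loss into; the shell-measure gain alone cannot make up a full power of $N$ for $s$ close to $\tfrac12$. You have identified the obstacle but not produced the idea that removes it.

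The paper's resolution of this region uses no CKZ at all and no measure-of-shell argument. After fixing a small $\alpha$ and restricting to the set $\mathcal{D}_1$ where both input frequencies satisfy $|\mu_i|\sim\sqrt3|\xi_i|$, it splits according to the signs of $\xi_1\xi_2$ and $\mu_1\mu_2$. If both products are positive, then from \eqref{Resonance2} one has $|H|\gtrsim N_1^3$, so the maximal modulation is enormous and the crude $L^4$ argument suffices. If the signs are mixed, then $|\partial H/\partial\mu_1|=2|\mu_1\xi_1-\mu_2\xi_2|\gtrsim N^2$ because the two terms add rather than cancel, and running the proof of Proposition \ref{BilinStrichartzI} in the $\mu_1$-direction recovers exactly the gain $N^{-1/2}(L_1\vee L_2)^{1/2}(L_1\wedge L_2)^{1/2}$ of \eqref{BilinStrichartzI1} even though $N_1\sim N_2$. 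If both products are negative, Lemma \ref{technicalR2} shows that $\bigl||(\xi_1,\mu_1)|^2-|(\xi_2,\mu_2)|^2\bigr|\gtrsim N^2$ (otherwise the output frequency would be low, contradicting $N\sim N_1$), i.e. $|\partial H/\partial\xi_1|\gtrsim N^2$, and the same bilinear gain follows. The symmetric sets $\mathcal{D}_2,\mathcal{D}_3$ (output plus one input degenerate) are handled by exchanging the roles of the three functions, and CKZ is reserved for $\mathcal{D}_4$, where at least two of the three frequencies satisfy $|3\xi^2-\mu^2|\gtrsim\alpha N^2$ and the $N^{-1/4}$ gain per factor applies. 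To complete your proof you would need to replace your $|K|$-shell strategy by this sign analysis of $\nabla H$ (or an equivalent mechanism) on the degenerate set.
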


Before proving Proposition \ref{BilinR2}, we give a technical lemma. 
\begin{lemma} \label{technicalR2}
Assume that $0<\alpha<1$. Then, we have that 
\begin{equation} \label{technicalR2.1}
\begin{split}
|(\xi_1+\xi_2,&\mu_1+\mu_2)|^2 \\ &\le \big||(\xi_1,\mu_1)|^2-|(\xi_2,\mu_2)|^2 \big|+f(\alpha)\max\big\{|(\xi_1,\mu_1)|^2,  |(\xi_2,\mu_2)|^2 \big\},
\end{split}
\end{equation}
for all $(\xi_1,\mu_1), \ (\xi_2,\mu_2) \in \mathbb R^2$ satisfying 
\begin{equation} \label{technicalR2.2}
(1-\alpha)^{\frac12}\sqrt{3}|\xi_i| \le |\mu_i| \le (1-\alpha)^{-\frac12}\sqrt{3}|\xi_i|, \quad \text{for} \ i=1,2,
\end{equation}
and 
\begin{equation} \label{technicalR2.3}
\xi_1\xi_2<0 \quad \text{and} \quad \mu_1\mu_2<0,
\end{equation}
and where $f$ is a continuous function on $[0,1]$ satisfying $\lim_{\alpha \to 0}f(\alpha)=0$.
We also recall te notation $|(\xi,\mu)|=\sqrt{3\xi^2+\mu^2}$.
\end{lemma}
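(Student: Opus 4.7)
The plan is to recast everything using the standard Euclidean structure on $\mathbb{R}^2$. Since $|(\xi,\mu)|^2=3\xi^2+\mu^2$, setting $P_i:=(\sqrt{3}\,\xi_i,\mu_i)$ gives $|(\xi_i,\mu_i)|^2=|P_i|^2$ and $|(\xi_1+\xi_2,\mu_1+\mu_2)|^2=|P_1+P_2|^2$, so the claim reduces to a planar inequality for two ordinary vectors $P_1,P_2\in\mathbb{R}^2$. Writing $P_i=\rho_ie_i$ with $\rho_i=|P_i|$ and $e_i\in S^1$, one has
\begin{equation*}
|P_1+P_2|^2=\rho_1^2+\rho_2^2+2\rho_1\rho_2\,e_1\cdot e_2.
\end{equation*}

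The next step is to bound $e_1\cdot e_2$ using both hypotheses. The sign conditions \eqref{technicalR2.3} force each coordinate of $P_1$ to have sign opposite to the corresponding coordinate of $P_2$, so $P_1$ and $P_2$ lie in diagonally opposite open quadrants; in particular $-e_2$ lies in the same open quadrant as $e_1$. Moreover, the aperture assumption \eqref{technicalR2.2} is equivalent to $|\tan\theta_i|\in [(1-\alpha)^{1/2},(1-\alpha)^{-1/2}]$ for the argument $\theta_i$ of $P_i$, which confines $e_1$ and $-e_2$ to a common arc of angular width at most
\begin{equation*}
\beta(\alpha):=\arctan\bigl((1-\alpha)^{-1/2}\bigr)-\arctan\bigl((1-\alpha)^{1/2}\bigr),
\end{equation*}
a continuous function on $[0,1]$ with $\beta(0)=0$. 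The angle between $e_1$ and $-e_2$ is therefore at most $\beta(\alpha)$, which yields $e_1\cdot e_2\le -\cos\beta(\alpha)$.

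Substituting this back and completing the square gives
\begin{equation*}
|P_1+P_2|^2\le(\rho_1-\rho_2)^2+2\rho_1\rho_2\bigl(1-\cos\beta(\alpha)\bigr).
\end{equation*}
Assuming without loss of generality that $\rho_1\ge\rho_2$, one has $(\rho_1-\rho_2)^2\le(\rho_1-\rho_2)(\rho_1+\rho_2)=\rho_1^2-\rho_2^2=\bigl||(\xi_1,\mu_1)|^2-|(\xi_2,\mu_2)|^2\bigr|$ and $2\rho_1\rho_2\le 2\rho_1^2=2\max\{|(\xi_1,\mu_1)|^2,|(\xi_2,\mu_2)|^2\}$, so \eqref{technicalR2.1} holds with $f(\alpha):=2\bigl(1-\cos\beta(\alpha)\bigr)$, which is continuous on $[0,1]$ and vanishes at $0$.

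The only step that requires a little care is the angular estimate: one must verify that the two admissible quadrant patterns, namely $\{P_1\in Q_1,\,P_2\in Q_3\}$ and $\{P_1\in Q_2,\,P_2\in Q_4\}$, both lead to the same bound $\beta(\alpha)$ on the angle between $e_1$ and $-e_2$. This is immediate from the invariance of the hypothesis \eqref{technicalR2.2} under sign changes of $\xi_i$ and $\mu_i$, which makes the two configurations equivalent via reflection across the coordinate axes; once this symmetry is noted, the remainder of the proof is a direct one-line manipulation.
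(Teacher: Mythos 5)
Your proof is correct, and it takes a somewhat different route from the paper's. The paper works throughout with the anisotropic scalar product $(\vec u_1,\vec u_2)_e=3\xi_1\xi_2+\mu_1\mu_2$, reduces \eqref{technicalR2.1} to the single inequality $(\vec u_1+\vec u_2,\vec u_2)_e\le \tfrac{f(\alpha)}2|\vec u_1|^2$, and then verifies this by explicit coordinate manipulations: the aperture condition is used to replace $\mu_1\mu_2$ by $\xi$-terms, and an auxiliary bound $\xi_1\ge -(1-g(\alpha))\xi_2$ is extracted from $|\vec u_1|\ge|\vec u_2|$, leading to a rather opaque $f(\alpha)=12g(\alpha)+6\alpha+6((1-\alpha)^{-1}-1)$. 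You instead linearize the geometry by the change of variables $P_i=(\sqrt3\,\xi_i,\mu_i)$, which turns $|\cdot|$ into the Euclidean norm and the whole statement into the law of cosines plus an angle estimate: the sign conditions put $e_1$ and $-e_2$ in the same open quadrant, the aperture condition confines them to a common arc of width $\beta(\alpha)$, and near-antipodality gives $e_1\cdot e_2\le-\cos\beta(\alpha)$. The two arguments quantify the same cross term, but yours is cleaner and yields the sharper, globally bounded choice $f(\alpha)=2(1-\cos\beta(\alpha))\le 2$ (with $\beta(1):=\pi/2$ by continuous extension), whereas the paper's $f$ blows up as $\alpha\to1$; since only the behaviour as $\alpha\to0$ is used downstream, both are adequate. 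Your closing remark about checking the two quadrant patterns is exactly the right point of care, and the reflection symmetry disposes of it as you say.
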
 
 
\begin{proof} If we denote by $\vec{u}_1=(\xi_1,\mu_1)$, $\vec{u}_2=(\xi_2,\mu_2)$ and $(\vec{u}_1,\vec{u}_2)_{e}=3\xi_1\xi_2+\mu_1\mu_2$ the scalar product associated to $|\cdot|$, then \eqref{technicalR2.1} is equivalent to 
\begin{equation} \label{technicalR2.4}
|\vec{u}_1+\vec{u}_2|^2\le \big||\vec{u}_1|^2-|\vec{u}_2|^2 \big|+f(\alpha)\max\big\{|\vec{u}_1|^2,  |\vec{u}_2|^2 \big\}.
\end{equation}
Moreover, without loss of generality, we can always assume that 
\begin{equation} \label{technicalR2.5}
\xi_1>0, \ \mu_1>0, \ \xi_2<0, \ \mu_2<0 \ \text{and}  \ |\vec{u}_1| \ge |\vec{u}_2|.
\end{equation} 
Thus, it suffices to prove that 
\begin{equation} \label{technicalR2.6} 
(\vec{u}_1+\vec{u}_2,\vec{u}_2)_e \le \frac{f(\alpha)}2|\vec{u}_1|^2.
\end{equation}
By using \eqref{technicalR2.2} and \eqref{technicalR2.3}, we have that 
\begin{equation} \label{technicalR2.7}
\begin{split}
(\vec{u}_1+\vec{u}_2,\vec{u}_2)_e  &=3(\xi_1+\xi_2)\xi_2+(\mu_1+\mu_2)\mu_2 \\ & 
\le 6(\xi_1+\xi_2)\xi_2-3\alpha\xi_1\xi_2+3\big((1-\alpha)^{-1}-1 \big)\xi_2^2
\end{split}
\end{equation}
On the other hand, the assumptions $\xi_1>0$, $\xi_2<0$, $|\vec{u}_1| \ge |\vec{u}_2|$ and \eqref{technicalR2.2} imply that
\begin{equation} \label{technicalR2.8}
\xi_1=|\xi_1| \ge (1-g(\alpha))|\xi_2|=-(1-g(\alpha))\xi_2
\end{equation}
with 
\begin{displaymath}
g(\alpha)=1-\Big(\frac{2-\alpha}{2+3\big((1-\alpha)^{-1}-1 \big)}\Big)^{\frac12} \underset{\alpha \to 0}{\longrightarrow} 0.
\end{displaymath}
Thus, it follows gathering \eqref{technicalR2.7} and \eqref{technicalR2.8} that 
\begin{displaymath} 
(\vec{u}_1+\vec{u}_2,\vec{u}_2)_e \le 6g(\alpha)\xi_2^2-3\alpha\xi_1\xi_2+3\big((1-\alpha)^{-1}-1 \big)\xi_2^2,
\end{displaymath}
which implies \eqref{technicalR2.6} by choosing 
\begin{displaymath} 
f(\alpha)=12g(\alpha)+6\alpha+6\big((1-\alpha)^{-1}-1 \big)\underset{\alpha \to 0}{\longrightarrow} 0.
\end{displaymath}
\end{proof}

\begin{proof}[Proof of Proposition \ref{BilinR2}]By duality, it suffices to prove that 
\begin{equation} \label{BilinR2.2}
I \lesssim \|u\|_{L^2_{x,y,t}}\|v\|_{L^2_{x,y,t}}\|w\|_{L^2_{x,y,t}},
\end{equation}
where 
\begin{displaymath} 
I=\int_{\mathbb R^6}\Gamma^{\xi_1,\mu_1,\tau_1}_{\xi,\mu,\tau}\widehat{w}(\xi,\mu,\tau)\widehat{u}(\xi_1,\mu_1,\tau_1)\widehat{v}(\xi_2,\mu_2,\tau_2) d\nu,
\end{displaymath}
$\widehat{u}$, $\widehat{v}$ and $\widehat{w}$ are nonnegative functions, and we used the following notations
\begin{equation} \label{BilinR2.20}
\begin{split}
&\Gamma^{\xi_1,\mu_1,\tau_1}_{\xi,\mu,\tau}=|\xi| \langle |(\xi,\mu)| \rangle^s\langle \sigma\rangle^{-\frac12+2\delta}\langle |(\xi_1,\mu_1)| \rangle^{-s}\langle \sigma_1 \rangle^{-\frac12-\delta}
\langle |(\xi_2,\mu_2)| \rangle^{-s}\langle \sigma_2 \rangle^{-\frac12-\delta}, \\
&d\nu=d\xi d\xi_1d\mu d\mu_1d\tau d\tau_1, \quad \xi_2=\xi-\xi_1, \ \mu_2=\mu-\mu_1, \ \tau_2=\tau-\tau_1, \\ 
&\sigma=\tau-w(\xi,\mu) \quad \text{and} \quad \sigma_i=\tau_i-w(\xi_i,\mu_i), \ i=1,2.
\end{split}
\end{equation}

By using dyadic decompositions on the spatial frequencies of $u$, $v$ and $w$, we rewrite $I$ as 
\begin{equation} \label{BilinR2.3}
I=\sum_{N_1,N_2,N}I_{N,N_1,N_2} ,
\end{equation}
where 
\begin{displaymath} 
I_{N,N_1,N_2}=\int_{\mathbb R^6}\Gamma^{\xi_1,\mu_1,\tau_1}_{\xi,\mu,\tau}\widehat{P_Nw}(\xi,\mu,\tau)\widehat{P_{N_1}u}(\xi_1,\mu_1,\tau_1)\widehat{P_{N_2}v}(\xi_2,\mu_2,\tau_2) d\nu .
\end{displaymath}
Since $(\xi,\mu)=(\xi_1,\mu_1)+(\xi_2,\mu_2)$, we can split the sum into the following cases: 
\begin{itemize}
\item[(1)] $Low \times Low \to Low$ interactions: $N_1\le 2, N_2 \le 2, N \le 2$. In this case, we denote 
$$I_{LL \to L}=\sum_{N \le 4, N_1\le 4, N_2 \le 4}I_{N,N_1,N_2}.$$
\item[(2)] $Low \times High \to High$ interactions: $4 \le N_2,  N_1 \le N_2/4$  ($\Rightarrow N_2/2 \le N \le 2N_2$). In this case, we denote 
$$I_{LH \to H}=\sum_{4 \le N_2, N_1 \le N_2/4, N_2/2 \le N \le 2N_2}I_{N,N_1,N_2}.$$
\item[(3)] $High \times Low \to High$ interactions: $4 \le N_1, N_2 \le N_1/4$ ($\Rightarrow N_1/2 \le N \le 2N_1$). In this case, we denote 
$$I_{HL\to H}=\sum_{4 \le N_1, N_2 \le N_1/4, N_1/2 \le N \le 2N_1}I_{N,N_1,N_2}.$$
\item[(4)] $High \times High \to Low$ interactions: $4 \le N_1,N \le N_1/4$ ($\Rightarrow N_1/2 \le N_2 \le 2N_1$) 
or $4 \le N_2, N \le N_2/4$ ($\Rightarrow N_2/2 \le N_1 \le 2N_2$) .
In this case, we denote 
$$I_{HH \to L}=\sum_{4 \le N_1, N \le N_1/4, N_2/2 \le N_1 \le 2N_2}I_{N,N_1,N_2}.$$
\item[(5)] $High \times High \to High$ interactions: $N_2 \ge 4$, $N_1 \ge 4$, $N_2/2 \le N_1 \le 2N_2$, $N_1/2 \le N \le 2N_1$ and $N_2/2 \le N \le 2N_2$. In this case, we denote 
$$I_{HH \to H}=\sum_{N_2/2 \le N_1 \le 2N_2, N_1/2 \le N \le 2N_1, N_2/2 \le N \le 2N_2}I_{N,N_1,N_2}.$$
\end{itemize} 
Then, we have 
\begin{equation} \label{BilinR2.4}
I=I_{LL \to L}+I_{LH \to H}+I_{HL \to H}+I_{HH \to L}+I_{HH \to H}.
\end{equation}

\noindent \textit{1. Estimate for $I_{LL \to L}$.} We observe from Plancherel's identity, H\"older's inequality and estimate \eqref{Strichartzcoro1} that
\begin{equation} \label{BilinR2.40}
\begin{split}
I_{N,N_1,N_2} &\lesssim \big\|\Big(\frac{\widehat{P_{N_1}u}}{\langle\sigma_1\rangle^{\frac12+\delta}}\Big)^{\vee}\big\|_{L^4} \big\|\Big(\frac{\widehat{P_{N_2}v}}{\langle\sigma_2\rangle^{\frac12+\delta}}\Big)^{\vee}\big\|_{L^4}\|P_Nw\|_{L^2} \\ & \lesssim \|P_{N_1}u\|_{L^2}\|P_{N_2}v\|_{L^2}\|P_Nw\|_{L^2},
\end{split}
\end{equation}
which yields 
\begin{equation} \label{BilinR2.400}
I_{LL \to L} \lesssim \|u\|_{L^2}\|v\|_{L^2}\|w\|_{L^2}.
\end{equation}

\noindent \textit{2. Estimate for $I_{LH \to H}$.} In this case, we also use dyadic decompositions on the modulations variables $\sigma$, $\sigma_1$ and $\sigma_2$, so that 
\begin{equation} \label{BilinR2.5}
I_{N,N_1,N_2}=\sum_{L,L_1,L_2}I_{N,N_1,N_2}^{L,L_1,L_2},
\end{equation}
where 
\begin{displaymath} 
I_{N,N_1,N_2}^{L,L_1,L_2}=\int_{\mathbb R^6}\Gamma^{\xi_1,\mu_1,\tau_1}_{\xi,\mu,\tau}\widehat{P_NQ_Lw}(\xi,\mu,\tau)\widehat{P_{N_1}Q_{L_1}u}(\xi_1,\mu_1,\tau_1)\widehat{P_{N_2}Q_{L_2}v}(\xi_2,\mu_2,\tau_2) d\nu .
\end{displaymath}
Hence, by using the Cauchy-Schwarz inequality in $(\xi,\mu,\tau)$, we can bound $I_{N,N_1,N_2}^{L,L_1,L_2}$ by 
\begin{displaymath} \label{BilinR2.6}
N_2N_1^{-s}L^{-\frac12+2\delta}L_1^{-\frac12-\delta}L_2^{-\frac12-\delta}\|(P_{N_1}Q_{L_1}u) (P_{N_2}Q_{L_2}v)\|_{L^2}\|P_NQ_Lw\|_{L^2}.
\end{displaymath}
Now, estimate \eqref{BilinStrichartzI1}  provides the following bound for $I_{LH \to H}$,
\begin{displaymath} 
 \sum_{L,L_1,L_2}L^{-\frac12+2\delta}L_1^{-\delta}L_2^{-\delta} \sum_{N\sim N_2, N_1 \le N_2/4}N_1^{-(s-\frac12)}\|P_{N_1}Q_{L_1}u\|_{L^2}\|P_{N_2}Q_{L_2}v\|_{L^2}\|P_NQ_Lw\|_{L^2} .
\end{displaymath}
Therefore, we deduce after summing over $L, \ L_1, \ L_2, \ N_1$ and applying the Cauchy-Schwarz inequality in $N \sim N_2$ that
\begin{equation} \label{BilinR2.7}
\begin{split}
I_{LH \to H} &\lesssim \|u\|_{L^2}\sum_{N \sim N_2}\|P_{N_2}v\|_{L^2}\|P_Nw\|_{L^2} \\ &
\lesssim \|u\|_{L^2}\big(\sum_{N_2}\|P_{N_2}v\|_{L^2}^2\big)^{\frac12}\big(\sum_{N}\|P_{N}w\|_{L^2}^2\big)^{\frac12} \\ & \lesssim \|u\|_{L^2}\|v\|_{L^2}\|w\|_{L^2}.
\end{split}
\end{equation}

\noindent \textit{3. Estimate for $I_{HL \to H}$.} Arguing similarly, we get that
\begin{equation} \label{BilinR2.8}
I_{HL \to H} 
\lesssim \|u\|_{L^2}\|v\|_{L^2}\|w\|_{L^2}.
\end{equation}

\noindent  \textit{4. Estimate for $I_{HH \to L}$.} We use the same decomposition as in \eqref{BilinR2.5}. By using the Cauchy-Schwarz inequality, we can bound $I_{N,N_1,N_2}^{L,L_1,L_2}$ by
\begin{equation} \label{BilinR2.9}
 L^{-\frac12+2\delta}L_1^{-\frac12-\delta}L_2^{-\frac12-\delta} \frac{N^{s+1}}{N_1^sN_2^s}
\big\|(\widetilde{P_{N_1}Q_{L_1}u}) (P_{N}Q_{L}w)\big\|_{L^2} \|P_{N_2}Q_{L_2}v\|_{L^2},
\end{equation}
where $\tilde{f}(\xi,\mu,\tau)=f(-\xi,-\mu,-\tau)$.  Moreover, observe interpolating \eqref{BilinStrichartzI0} and \eqref{BilinStrichartzI1} that
\begin{equation} \label{BilinR2.10}
\begin{split}
&\|(\widetilde{P_{N_1}Q_{L_1}u}) (P_{N}Q_{L}w)\|_{L^2} \\ & \lesssim
\frac{(N_1 \wedge N)^{\frac12(1+\theta)}}{(N_1 \vee N)^{1-\theta}}(L_1 \vee
L)^{\frac12(1-\theta)}(L_1 \wedge L)^{\frac12}\|P_{N_1}Q_{L_1}u\|_{L^2}
\|P_{N}Q_{L}w\|_{L^2},
\end{split}
\end{equation}
for all $0 \le \theta \le 1$. Without loss of generality, we can assume that $L=L \vee L_1$ (the case $L_1=L \vee L_1$ is actually easier). Hence, we deduce from \eqref{BilinR2.9} and \eqref{BilinR2.10} that 
\begin{equation} \label{BilinR2.11}
 I_{N,N_1,N_2}^{L,L_1,L_2} \lesssim L_1^{-\delta}L_2^{-\frac12-\delta}L^{2\delta-\frac{\theta}2}N^{\frac12+\theta}N_1^{-(s-\theta)}\|P_{N_1}Q_{L_1}u\|_{L^2}\|P_{N}Q_{L}w\|_{L^2}\|P_{N_2}Q_{L_2}v\|_{L^2}
\end{equation}
Now, we choose $0<\theta<1$ and $\delta>0$ satisfying $0<2\theta<s-\frac12$ and $0<\delta<\frac{\theta}4$. It follows after summing \eqref{BilinR2.11} over $L, \ L_1, \ L_2$ and performing the Cauchy-Schwarz inequality in $N$ and $N_1$ that
\begin{equation} \label{BilinR2.12}
\begin{split}
I_{HH \to L} &\lesssim \sum_{N_1}N_1^{-(s-\frac12-2\theta)}\|P_{N_1}u\|_{L^2}\big(\sum_{N}\|P_{N}w\|_{L^2}^2\big)^{\frac12}\|v\|_{L^2} \\ &
\lesssim \|u\|_{L^2}\|w\|_{L^2}\|v\|_{L^2}.
\end{split}
\end{equation}

\noindent \textit{5. Estimate for $I_{HH \to H}$.} Let $0<\alpha <1$ be a small positive number such that $f(\alpha)=\frac{1}{1000}$, where $f$ is defined in Lemma \ref{technicalR2}. In order to simplify the notations, we will denote $(\xi,\mu,\tau)=(\xi_0,\mu_0,\tau_0)$. We split the integration domain in the following subsets: 
\begin{displaymath}
\begin{split}
\mathcal{D}_1&=\big\{ (\xi_1,\mu_1,\tau_1,\mu,\xi,\tau) \in \mathbb R^6 \ : \ (1-\alpha)^{\frac12}\sqrt{3}|\xi_i| \le |\mu_i| \le (1-\alpha)^{-\frac12}\sqrt{3}|\xi_i|, \ i=1,2 \big\},\\
\mathcal{D}_2&=\big\{ (\xi_1,\mu_1,\tau_1,\mu,\xi,\tau) \in \mathbb R^6 \ : \ (1-\alpha)^{\frac12}\sqrt{3}|\xi_i| \le |\mu_i| \le (1-\alpha)^{-\frac12}\sqrt{3}|\xi_i|, \ i=0,1 \big\},
\\ \mathcal{D}_3&=\big\{ (\xi_1,\mu_1,\tau_1,\mu,\xi,\tau) \in \mathbb R^6 \ : \ (1-\alpha)^{\frac12}\sqrt{3}|\xi_i| \le |\mu_i| \le (1-\alpha)^{-\frac12}\sqrt{3}|\xi_i|, \ i=0,2 \big\},
\\ \mathcal{D}_4&=\mathbb R^6 \setminus \bigcup_{j=1}^3\mathcal{D}_j \ .
\end{split}
\end{displaymath}
Then, if we denote by $I_{HH \to H}^j$ the restriction of $I_{HH \to H}$ to the domain $\mathcal{D}_j$, we have that 
\begin{equation} \label{BilinR2.13}
I_{HH \to H}=\sum_{j=1}^4I_{HH \to H}^j.
\end{equation}

\noindent  \textit{5.1. Estimate for $I^1_{HH \to H}$.} We consider the following subcases.
\begin{itemize} 
\item[(i)] \textit{Case $\big\{\xi_1\xi_2>0 \ \text{and}  \ \mu_1\mu_2>0\big\}$.} We define 
\begin{displaymath} 
\mathcal{D}_{1,1}=\big\{ (\xi_1,\mu_1,\tau_1,\mu,\xi,\tau) \in \mathcal{D}_1 \ : \ \xi_1\xi_2>0 \ \text{and}  \ \mu_1\mu_2>0\big\}
\end{displaymath}
and denote by $I_{HH \to H}^{1,1}$ the restriction of $I_{HH \to H}^1$ to the domain $\mathcal{D}_{1,1}$. We observe from \eqref{Resonance2} and the frequency localization that 
\begin{equation} \label{BilinR2.i.1}
\max\{|\sigma|,|\sigma_1|,|\sigma_2|\} \gtrsim  |H(\xi_1,\mu_1,\xi_2,\mu_2)| \gtrsim N_1^3
\end{equation}
in the region $\mathcal{D}_{1,1}$. Therefore, it follows arguing exactly as in \eqref{BilinR2.40} that 
\begin{equation} \label{BilinR2.i.2}
I_{HH \to H}^{1,1} \lesssim \|u\|_{L^2}\|v\|_{L^2}\|w\|_{L^2}.
\end{equation}

\item[(ii)] \textit{Case $\big\{\xi_1\xi_2>0 \ \text{and}  \ \mu_1\mu_2<0\big\}$ or $\big\{\xi_1\xi_2<0 \ \text{and}  \ \mu_1\mu_2>0\big\}$.} We define 
\begin{displaymath} 
\mathcal{D}_{1,2}=\big\{ (\xi_1,\mu_1,\tau_1,\mu,\xi,\tau) \in \mathcal{D}_1 \ : \ \xi_1\xi_2>0, \   \mu_1\mu_2<0\ \text{or} \ \xi_1\xi_2<0, \  \mu_1\mu_2>0\big\}
\end{displaymath}
and denote by $I_{HH \to H}^{1,2}$ the restriction of $I_{HH \to H}^1$ to the domain $\mathcal{D}_{1,2}$. Moreover, we use dyadic decompositions on the variables $\sigma$, $\sigma_1$ and $\sigma_2$ as in \eqref{BilinR2.5}. Plancherel's identity and the Cauchy-Schwarz inequality yield 
\begin{equation} \label{BilinR2.i.3}
I_{N,N_1,N_2}^{L,L_1,L_2} \lesssim N^{1-s}L^{-\frac12+2\delta}L_1^{-\frac12-\delta}L_2^{-\frac12-\delta}\|(P_{N_1}Q_{L_1}u) (P_{N_2}Q_{L_2}v)\|_{L^2}\|w\|_{L^2}.
\end{equation}
Next, we argue as in  \eqref{BilinStrichartzI3} to estimate $\|(P_{N_1}Q_{L_1}u) (P_{N_2}Q_{L_2}v)\|_{L^2}$. Moreover, we observe that
\begin{displaymath}
\Big|\frac{\partial H}{\partial \mu_1}(\xi_1,\xi-\xi_1,\mu_1,\mu-\mu_1) \Big|=2\big|\mu_1\xi_1-\mu_2\xi_2 \big| \gtrsim N^2
\end{displaymath}
in the region $\mathcal{D}_{1,2}$. Thus, we deduce from Lemma \ref{basicI}, estimates \eqref{basicII1} and \eqref{BilinStrichartzI3} and \eqref{BilinStrichartzI4} that
\begin{equation} \label{BilinR2.i.4}
\begin{split}
\|(P_{N_1}Q_{L_1}u)& (P_{N_2}Q_{L_2}v)\|_{L^2} \\ &\lesssim N^{-\frac12}(L_1 \vee L_2)^{\frac12}(L_1 \wedge L_2)^{\frac12}\|P_{N_1}Q_{L_1}u\|_{L^2}\|P_{N_2}Q_{L_2}v\|_{L^2}.
\end{split}
\end{equation}
Therefore, we deduce combining estimates \eqref{BilinR2.i.3} and \eqref{BilinR2.i.4} and summing over $L, \ L_1, \ L_2$ and $N \sim N_1 \sim N_2$ that
\begin{equation} \label{BilinR2.i.5}
I_{HH \to H}^{1,2} \lesssim \|u\|_{L^2}\|v\|_{L^2}\|w\|_{L^2}.
\end{equation}

\item[(iii)] \textit{Case $\big\{\xi_1\xi_2<0 \ \text{and}  \ \mu_1\mu_2<0\big\}$.} We define 
\begin{displaymath} 
\mathcal{D}_{1,3}=\big\{ (\xi_1,\mu_1,\tau_1,\mu,\xi,\tau) \in \mathcal{D}_1 \ : \ \xi_1\xi_2<0 \ \text{and}  \ \mu_1\mu_2<0\big\}
\end{displaymath}
and denote by $I_{HH \to H}^{1,3}$ the restriction of $I_{HH \to H}^1$ to the domain $\mathcal{D}_{1,3}$. Moreover, we observe  due to the frequency localization that there exists some $0<\gamma \ll 1$ such that
\begin{equation} \label{BilinR2.i.6}
\Big| |(\xi_2,\mu_2)|^2-|(\xi_1,\mu_1)|^2\Big| \ge\gamma \max\big\{ |(\xi_1,\mu_1)|^2, |(\xi_2,\mu_2)|^2\big\}
\end{equation}
in $\mathcal{D}_{1,3}$. Indeed, if estimate \eqref{BilinR2.i.6} does not hold for all $0<\gamma\le\frac1{1000}$, then estimate \eqref{technicalR2.1} with $f(\alpha)=\frac1{1000}$ would imply that  
\begin{displaymath} 
|(\xi,\mu)|^2 \le\frac1{500}\max\big\{ |(\xi_1,\mu_1)|^2, |(\xi_2,\mu_2)|^2\big\}
\end{displaymath}
which would be a contradiction since we are in the $High\times High \to High $ interactions case. Thus, we  deduce from \eqref{BilinR2.i.6} that 
\begin{displaymath} 
\Big|\frac{\partial H}{\partial \xi_1}(\xi_1,\xi-\xi_1,\mu_1,\mu-\mu_1) \Big| = \Big| |(\xi_2,\mu_2)|^2-|(\xi_1,\mu_1)|^2\Big| \gtrsim N^2.
\end{displaymath}
We can then reapply the arguments in the proof of Proposition \ref{BilinStrichartzI} to show that estimate \eqref{BilinR2.i.4} still holds true in this case. Therefore, we conclude arguing as above that 
\begin{equation} \label{BilinR2.i.7}
I_{HH \to H}^{1,3} \lesssim \|u\|_{L^2}\|v\|_{L^2}\|w\|_{L^2}.
\end{equation}
\end{itemize}
Finally, estimates \eqref{BilinR2.i.2}, \eqref{BilinR2.i.5} and \eqref{BilinR2.i.7} imply that 
\begin{equation} \label{BilinR2.i.8}
I_{HH \to H}^{1} \lesssim \|u\|_{L^2}\|v\|_{L^2}\|w\|_{L^2}.
\end{equation}

\noindent  \textit{5.2. Estimate for $I^2_{HH \to H}$ and $I^3_{HH \to H}$.} 
Arguing as for $I^1_{HH \to H}$, we get that 
\begin{equation} \label{BilinR2.ii.1}
I_{HH \to H}^{2}+ I_{HH \to H}^{3}\lesssim \|u\|_{L^2}\|v\|_{L^2}\|w\|_{L^2}.
\end{equation} 
We explain for example how to deal with $I_{HH \to H}^{2}$. It suffices to rewrite $I_{N,N_1,N_2}$ as 
\begin{displaymath} 
I_{N,N_1,N_2}=\int_{\mathcal{D}_2}\Gamma^{\tilde{\xi}_1,\tilde{\mu}_1,\tilde{\tau}_1}_{\xi,\mu,\tau}\widehat{P_Nw}(\xi,\mu,\tau)\widehat{\widetilde{P_{N_1}u}}(\tilde{\xi}_1,\tilde{\mu}_1,\tilde{\tau}_1)\widehat{P_{N_2}v}(\xi_2,\mu_2,\tau_2) d\tilde{\nu},
\end{displaymath}
where 
\begin{displaymath} 
d\tilde{\nu}=d\xi d\xi_2d\mu d\mu_2d\tau d\tau_2, \quad \tilde{\xi}_1=\xi_2-\xi, \ \tilde{\mu}_1=\mu_2-\mu, \ \tilde{\tau}_1=\tau_2-\tau,
\end{displaymath}
and $\Gamma^{\tilde{\xi}_1,\tilde{\mu}_1,\tilde{\tau}_1}_{\xi,\mu,\tau}$ is defined as in \eqref{BilinR2.20}. Moreover, we observe that 
\begin{displaymath}
\tilde{H}=H(\xi,\xi_2-\xi,\mu,\mu_2-\mu)=w(\xi_2,\mu_2)-w(\xi,\mu)-w(\xi_2-\xi,\mu_2-\mu)
\end{displaymath}
satisfies 
\begin{displaymath}
\Big|\frac{\partial \tilde{H}}{\partial \xi}\Big|=\big|3\xi^2+\mu^2-(3\tilde{\xi}_1^2+\tilde{\mu}_1^2) \big| \quad \text{and} \quad \Big|\frac{\partial \tilde{H}}{\partial \mu}\Big|=2\big|\xi\mu-\tilde{\xi}_1\tilde{\mu}_1 \big|.
\end{displaymath}
Therefore, we divide in the subregions $\big\{\xi\tilde{\xi}_1>0, \ \mu\tilde{\mu}_1>0 \}$, $\big\{\xi\tilde{\xi}_1<0, \ \mu\tilde{\mu}_1>0 \}$, $\big\{\xi\tilde{\xi}_1>0, \ \mu\tilde{\mu}_1<0 \}$ and $\big\{\xi\tilde{\xi}_1<0, \ \mu\tilde{\mu}_1<0 \}$ and use the same arguments as above. \\

\noindent  \textit{5.3. Estimate for $I^4_{HH \to H}$.} Observe that in the region $\mathcal{D}_4$, we have  
\begin{equation} \label{BilinR2.iv.1}
|\mu_i^2-3\xi_i^2|>\frac{\alpha}2 |(\xi_i,\mu_i)|^2 \quad \text{and} \quad |\mu_j^2-3\xi_j^2|>\frac{\alpha}2 |(\xi_j,\mu_j)|^2 ,
\end{equation} 
for at least a combination $(i,j)$ in $\{0,1,2\}$. Without loss of generality\footnote{in the other cases, we  cannot use estimate \eqref{Strichartzlin3} directly, but need to interpolate it with estimate \eqref{Strichartzcoro1} as previously.}, we can assume that $i=1$ and $j=2$ in \eqref{BilinR2.iv.1}. Then, we deduce from Plancherel's identity and H\"older's inequality that 
\begin{displaymath} 
I_{HH \to H}^4 \lesssim \sum_{  N_2 \sim N_1}N_1^{-(s-\frac12)}\|K(D)^{\frac18}\Big(\frac{\widehat{P_{N_1}u}}{\langle\sigma_1\rangle^{\frac12+\delta}}\Big)^{\vee}\|_{L^4} \|K(D)^{\frac18}\Big(\frac{\widehat{P_{N_2}v}}{\langle\sigma_2\rangle^{\frac12+\delta}}\Big)^{\vee}\|_{L^4}\|w\|_{L^2},
\end{displaymath}
where the operator $K(D)^{\frac18}$ is defined in Proposition \ref{Strichartzlin}. Therefore,  estimate \eqref{Strichartzlin3} implies that
\begin{equation} \label{BilinR2.iv.2}
I_{HH \to H}^4 \lesssim \|u\|_{L^2}\|v\|_{L^2}\|w\|_{L^2}.
\end{equation}

Finally, we conclude the proof of estimate \eqref{BilinR2.1} gathering estimates \eqref{BilinR2.4}, \eqref{BilinR2.400}, \eqref{BilinR2.7}, \eqref{BilinR2.8}, \eqref{BilinR2.12}, \eqref{BilinR2.13}, \eqref{BilinR2.i.8}, \eqref{BilinR2.ii.1} and \eqref{BilinR2.iv.2}.
\end{proof}

At this point, we observe that the proof of Theorem \ref{theoR2} follows from Proposition \ref{BilinR2} and the linear estimates \eqref{prop1.1.2}, \eqref{prop1.2.1} and \eqref{prop1.3b.1} by using a fixed point argument in a closed ball of $X^{s,\frac12+\delta}_T$ (see for example \cite{MST} for more details).

\section{Bilinear estimate in $\mathbb R \times \mathbb T$}

The main result of this section is stated below. 
\begin{proposition} \label{BilinRT}
Let $s \ge 1$. Then, there exists $\delta>0$ such that 
\begin{equation} \label{BilinRT.1}
\|\partial_x(uv)\|_{X^{s,-\frac12+2\delta}} \lesssim \|u\|_{X^{s,\frac12+\delta}}\|v\|_{X^{s,\frac12+\delta}},
\end{equation}
for all $u, \ v: \mathbb R \times \mathbb T \times \mathbb R \rightarrow \mathbb R$ such that $u, \ v \in X^{s,\frac12+\delta}$.
\end{proposition}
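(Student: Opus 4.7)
The plan is to mirror the argument of Proposition \ref{BilinR2}: by duality, reduce to showing that
$$I=\int \Gamma^{\xi_1,q_1,\tau_1}_{\xi,q,\tau}\,\widehat{w}\,\widehat{u}\,\widehat{v}\,d\nu \lesssim \|u\|_{L^2}\|v\|_{L^2}\|w\|_{L^2},$$
where $\Gamma$ is the weight from \eqref{BilinR2.20} and the $\mu$-integration has been replaced by summation over $q\in\Z$; then perform a spatial Littlewood-Paley decomposition and split into the same five interaction regimes $I_{LL\to L}$, $I_{LH\to H}$, $I_{HL\to H}$, $I_{HH\to L}$ and $I_{HH\to H}$. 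A crucial point is that Proposition \ref{BilinStrichartzI} is already stated in both the $\R^3$ and $\R\times\T\times\R$ settings, so the bilinear Strichartz bounds \eqref{BilinStrichartzI0}--\eqref{BilinStrichartzI1} are at our disposal here as well.

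The off-diagonal cases $I_{LH\to H}$, $I_{HL\to H}$ and $I_{HH\to L}$ from \eqref{BilinR2.7}, \eqref{BilinR2.8} and \eqref{BilinR2.12} transfer verbatim, and the strengthened assumption $s\ge 1$ in place of $s>1/2$ only makes the dyadic summation easier. For $I_{LL\to L}$, the $L^4$-Strichartz argument \eqref{BilinR2.40} cannot be used as is since Corollary \ref{Strichartzcoro} is an $\R^2$ statement. At these bounded spatial frequencies however, one can substitute Bernstein's inequality in the periodic direction $y$ (only finitely many $q\in\Z$ contribute) combined with the one-dimensional Airy-type Strichartz estimate for $e^{-t(\partial_x^3+q^2\partial_x)}$ applied fibrewise in $q$, which gives an adequate $L^4$-type bound with a loss that is harmless in the low-frequency regime.

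The genuine obstacle is $I_{HH\to H}$, for which the $\R^2$ proof relied crucially on the Carbery--Kenig--Ziesler estimate \eqref{Strichartzlin3}, unavailable on $\R\times\T$. The strategy is to exploit the extra half derivative provided by $s\ge 1$ in place of $s>1/2$ (which yields an additional $N^{-1/2}$ in $\Gamma$, precisely the saving that CKZ afforded in the $\R^2$ argument) together with a more careful resonance analysis. I would partition the domain into the four subdomains $\mathcal{D}_1,\mathcal{D}_2,\mathcal{D}_3,\mathcal{D}_4$ of Proposition \ref{BilinR2}, with $\mu_i$ replaced by $q_i\in\Z$ and $\Z$-summations handled via the $X=\T$ clause of Lemma \ref{basicI} (and Lemma \ref{basicIII} \eqref{basicIII2} where quadratic level sets in $q$ appear). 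On $\mathcal{D}_1$, $\mathcal{D}_2$, $\mathcal{D}_3$ the resonance-function arguments transfer unchanged: in each sub-sub-case either $|H|\gtrsim N^3$ (giving direct modulation savings via $\max(L,L_1,L_2)\gtrsim|H|$), or an appropriate partial derivative $\partial_{\xi_1}H$ or $\partial_{\mu_1}H$ is bounded below by $N^2$ (producing an improved bilinear bound of the type \eqref{BilinR2.i.4}), and the resulting factor $N^{1/2-s}$ is summable for any $s>1/2$, in particular for $s\ge 1$.

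The genuinely new step, and the place where I expect the bulk of the difficulty to lie, is $\mathcal{D}_4$. In $\mathcal{D}_4$ at least two of the vectors $(\xi_i,q_i)$ satisfy the angular separation $|3\xi_i^2-q_i^2|\gtrsim N^2$. Without CKZ, the plan is to combine a cylindrical $L^4$-Strichartz estimate --- obtained by decomposing in $q$ and applying the 1D Airy $L^6$-Strichartz in $x$, then using Bernstein in $y$ --- with the angular condition to recover a bilinear bound whose total derivative loss is exactly compensated by the extra $N^{-1/2}$ afforded by $s\ge 1$ over $s>1/2$. The hard part will be showing that these ingredients balance into a summable estimate in $N\sim N_1\sim N_2$; this is the only step where the proof genuinely departs from its $\R^2$ counterpart, and one needs to be careful that the periodic Strichartz loss in $y$ does not overshoot the $s\ge 1$ budget.
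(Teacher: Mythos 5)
Your reduction, dyadic setup, and treatment of the off-diagonal interactions $LH\to H$, $HL\to H$, $HH\to L$ agree with the paper (which indeed just invokes the $\R\times\T$ version of Proposition \ref{BilinStrichartzI} for those). The problem is the $HH\to H$ regime, where your plan of transplanting the $\mathcal{D}_1,\dots,\mathcal{D}_4$ decomposition does not survive the passage to the cylinder. The claim that the arguments on $\mathcal{D}_1,\mathcal{D}_2,\mathcal{D}_3$ ``transfer unchanged'' fails in the sub-case (ii) of the $\R^2$ proof (the region $\xi_1\xi_2>0$, $\mu_1\mu_2<0$ and its symmetric counterpart): there the gain comes from $|\partial H/\partial\mu_1|\gtrsim N^2$ and the \emph{measure} of a level set in the continuous variable $\mu_1$. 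On $\R\times\T$ the variable $q_1$ is an integer, and the number of lattice points in a set of measure $(L_1\vee L_2)N^{-2}$ is only bounded by $(L_1\vee L_2)N^{-2}+1$ (this is exactly the $X=\T$ clause of Lemma \ref{basicI} that you cite). When $L_1\vee L_2\ll N^2$ the $+1$ dominates, the bilinear bound degrades from $N^{-1/2}(L_1\vee L_2)^{1/2}(L_1\wedge L_2)^{1/2}$ to $N^{1/2}(L_1\wedge L_2)^{1/2}$, and the resulting factor $N^{3/2-s}$ is not summable over $N\sim N_1\sim N_2$ for $s$ near $1$. First-derivative bounds in the periodic frequency are essentially useless here; one must either differentiate in the continuous variable $\xi_1$, use a \emph{second}-derivative (convexity) bound via Lemma \ref{basicIII} \eqref{basicIII2} (which yields the milder count $(L/|a|)^{1/2}+1$ at the price of only a quarter power of the modulation), or force the modulations to be large via a lower bound on $|H|$ itself. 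Your sketch for $\mathcal{D}_4$ (fibrewise Airy $L^6$ plus Bernstein in $y$) has the same defect: the Bernstein step in the periodic direction costs $N^{1/4}$ per factor in $L^4$, again producing $N^{3/2-s}$, and you give no mechanism by which the angular condition $|3\xi_i^2-q_i^2|\gtrsim N^2$ recoups this; you yourself flag this step as unresolved.

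The paper's proof is organized quite differently and is worth contrasting with your plan. It never introduces the angular regions $\mathcal{D}_j$ on the cylinder. Instead it splits $J_{HH\to H}$ according to the sizes of the $x$-frequencies: (a) $|\xi|\le 100$, treated with a further dyadic decomposition in $|\xi|$ and the convexity bound $|\partial^2H/\partial\xi_1^2|=6|\xi|$ via \eqref{basicIII1}, yielding the $(L_1\vee L_2)^{1/4}$ estimate \eqref{BilinRT.9}; (b) $|\xi|\ge 100$ with $|\xi_1|\wedge|\xi_2|\le 100$, using $|\partial^2H/\partial q_1^2|=2|\xi|$ and \eqref{basicIII2}; (c) all $|\xi_i|\ge 100$, further split according to whether some $\big||(\xi_i,q_i)|^2-|(\xi_j,q_j)|^2\big|\ge NL^{6\delta}$ (in which case a genuine $\partial_{\xi_1}H$ gain in the continuous variable applies, giving \eqref{BilinRT.17}) or all such differences are small, in which case the near-factorization $H=-6\xi\xi_1\xi_2+\Theta$ with $|\Theta|\le|\xi_{med}|NL^{6\delta}$ either forces $\max(|\sigma|,|\sigma_1|,|\sigma_2|)\gtrsim|\xi_{min}||\xi_{max}|^2$ or confines $|\xi_{max}||\xi_{min}|\lesssim NL^{6\delta}$, the latter being closed by an extra dyadic decomposition $R_K$ in the $\xi$-variables. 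It is in balancing these quarter-power modulation gains and the $N_1^{1/2}$ losses from lattice counting that the hypothesis $s\ge1$ is actually consumed (see \eqref{BilinRT.24}), not in compensating for the absence of the Carbery--Kenig--Ziesler estimate as your proposal suggests. As written, your argument does not close.
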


\begin{proof} By duality, it suffices to prove that 
\begin{equation} \label{BilinRT.2}
J\lesssim \|u\|_{L^2_{x,y,t}}\|v\|_{L^2_{x,y,t}}\|w\|_{L^2_{x,y,t}},
\end{equation}
where 
\begin{displaymath} 
J=\sum_{q,  q_1 \in \mathbb Z^2}\int_{\mathbb R^4}\Gamma^{\xi_1,q_1,\tau_1}_{\xi,q,\tau}\widehat{w}(\xi,q,\tau)\widehat{u}(\xi_1,q_1,\tau_1)\widehat{v}(\xi_2,q_2,\tau_2) d\nu,
\end{displaymath}
$\widehat{u}$, $\widehat{v}$ and $\widehat{w}$ are nonnegative functions, and we used the following notations
\begin{equation} \label{BilinRT.3}
\begin{split}
&\Gamma^{\xi_1,q_1,\tau_1}_{\xi,q,\tau}=|\xi| \langle |(\xi,q)| \rangle^s\langle \sigma\rangle^{-\frac12+2\delta}\langle |(\xi_1,q_1)| \rangle^{-s}\langle \sigma_1 \rangle^{-\frac12-\delta}
\langle |(\xi_2,q_2)| \rangle^{-s}\langle \sigma_2 \rangle^{-\frac12-\delta}, \\
&d\nu=d\xi d\xi_1d\tau d\tau_1, \quad \xi_2=\xi-\xi_1, \ q_2=q-q_1, \ \tau_2=\tau-\tau_1, \\ 
&\sigma=\tau-w(\xi,q) \quad \text{and} \quad \sigma_i=\tau_i-w(\xi_i,q_i), \ i=1,2.
\end{split}
\end{equation}

By using dyadic decompositions on the spatial frequencies of $u$, $v$ and $w$, we rewrite $J$ as 
\begin{equation} \label{BilinRT.4}
J=\sum_{N_1,N_2,N}J_{N,N_1,N_2} ,
\end{equation}
where 
\begin{displaymath} 
J_{N,N_1,N_2}=\sum_{q,  q_1 \in \mathbb Z^2}\int_{\mathbb R^4}\Gamma^{\xi_1,q_1,\tau_1}_{\xi,q,\tau}\widehat{P_Nw}(\xi,q,\tau)\widehat{P_{N_1}u}(\xi_1,q_1,\tau_1)\widehat{P_{N_2}v}(\xi_2,q_2,\tau_2) d\nu .
\end{displaymath}
Now, we use the decomposition
\begin{equation} \label{BilinRT.5}
J=J_{LL \to L}+J_{LH \to H}+J_{HL \to H}+J_{HH \to L}+J_{HH \to H},
\end{equation}
where $J_{LL \to L}$, $J_{LH \to H}$, $J_{HL \to H}$, $J_{HH \to L}$, respectively $J_{HH \to H}$, denote the $Low\times Low \to Low$, $Low \times High \to High$, $High \times Low \to High$, $High \times High \to Low$, respectively $High \times High \to High$ contributions for $J$ as defined in the proof of Proposition \ref{BilinR2}. \\

\noindent \textit{1. Estimate for $J_{LH \to H}+J_{HL \to H}+J_{HH \to L}$.} Since Proposition \ref{BilinStrichartzI} also holds in the $\mathbb R \times \mathbb T$ case, we deduce arguing as in \eqref{BilinR2.7}, \eqref{BilinR2.8} and \eqref{BilinR2.12} that 
\begin{equation} \label{BilinRT.6}
J_{LH \to H}+J_{HL \to H}+J_{HH \to L} \lesssim \|u\|_{L^2}\|v\|_{L^2}\|w\|_{L^2}.
\end{equation}

\noindent \textit{2. Estimate for $J_{HH \to H}$.} We recall that $N\sim N_1 \sim N_2$ in this case. We divide the integration domain in several regions. \\

\noindent \textit{2.1 Estimate for  $J_{HH \to H}$ in the region $|\xi| \le 100$.} We denote by $J_{HH \to H}^1$ the restriction of $J_{HH \to H}$ to the region $|\xi|\le 100$ and use dyadic decompositions on the variables $\sigma$, $\sigma_1$, $\sigma_2$ and $\xi$, so that 
\begin{equation} \label{BilinRT.7} 
J_{N,N_1,N_2}=\sum_{k \ge 0}\sum_{L,L_1,L_2}J_{N,N_1,N_2,k}^{L,L_1,L_2},
\end{equation}
where $J_{N,N_1,N_2,k}^{L,L_1,L_2}$ is given by the expression
\begin{displaymath}
\sum_{q,  q_1 \in \mathbb Z^2}\int_{\mathcal{E}_k}\Gamma^{\xi_1,q_1,\tau_1}_{\xi,q,\tau}\widehat{P_NQ_Lw}(\xi,q,\tau)\widehat{P_{N_1}Q_{L_1}u}(\xi_1,q_1,\tau_1)\widehat{P_{N_2}Q_{L_2}v}(\xi_2,q_2,\tau_2) d\nu,
\end{displaymath}
with $\mathcal{E}_k=\{(\xi,\xi_1,\tau,\tau_1) \in \mathbb R^4 \ : \ 2^{-(k+1)}100 \le |\xi| \le 2^{-k}100 \}$. Thus, by using the Cauchy-Schwarz inequality, we get that
\begin{equation} \label{BilinRT.8}
J_{N,N_1,N_2,k}^{L,L_1,L_2} \lesssim 2^{-k}N_1^{-s}L^{-\frac12+2\delta}L_1^{-\frac12-\delta}L_2^{-\frac12-\delta}\|(P_{N_1}Q_{L_1}u) (P_{N_2}Q_{L_2}v)\|_{L^2}\|w\|_{L^2}.
\end{equation}
Next, we argue as in \eqref{BilinStrichartzI3} to estimate $\|(P_{N_1}Q_{L_1}u) (P_{N_2}Q_{L_2}v)\|_{L^2}$. Moreover, we observe that
\begin{displaymath} 
\Big|\frac{\partial^2H}{\partial \xi_1^2} (\xi,\xi_1,q,q_1)\Big|=6|\xi| \sim 2^{-k}.
\end{displaymath}
Thus, it follows from Lemma \ref{basicI}, estimates \eqref{basicIII1}, \eqref{BilinStrichartzI3} and \eqref{BilinStrichartzI4} that 
\begin{equation} \label{BilinRT.9} 
\begin{split}
\|(P_{N_1}Q_{L_1}u) &(P_{N_2}Q_{L_2}v)\|_{L^2} \\ & \lesssim 2^{\frac{k}4}N_1^{\frac12}(L_1 \wedge L_2)^{\frac12}(L_1 \vee L_2)^{\frac14} \|P_{N_1}Q_{L_1}u\|_{L^2}\|P_{N_2}Q_{L_2}v\|_{L^2}.
\end{split}
\end{equation}
Therefore, we deduce combining \eqref{BilinRT.8} and \eqref{BilinRT.9} and summing over $L, \ L_1, \ L_2$, $N\sim N_1\sim N_2$ and $k \in \mathbb N$ that
\begin{equation} \label{BilinRT.10} 
J^1_{HH \to H} \lesssim \|u\|_{L^2}\|v\|_{L^2}\|w\|_{L^2}.
\end{equation}

\noindent \textit{2.2 Estimate for $J_{HH \to H}$ in the region $|\xi| \ge 100$, and $|\xi_1| \wedge |\xi_2| \le 100$.} We denote by $J_{HH \to H}^2$ the restriction of $J_{HH \to H}$ to this region and use dyadic decompositions on the variables $\sigma$, $\sigma_1$, $\sigma_2$, so that 
\begin{equation} \label{BilinRT.11} 
J_{N,N_1,N_2}=\sum_{L,L_1,L_2}J_{N,N_1,N_2}^{L,L_1,L_2},
\end{equation}
where $J_{N,N_1,N_2}^{L,L_1,L_2}$ is given by the expression
\begin{equation} \label{BilinRT.110}
\sum_{q,  q_1 \in \mathbb Z^2}\int_{\mathbb R^4}\Gamma^{\xi_1,q_1,\tau_1}_{\xi,q,\tau}\widehat{P_NQ_Lw}(\xi,q,\tau)\widehat{P_{N_1}Q_{L_1}u}(\xi_1,q_1,\tau_1)\widehat{P_{N_2}Q_{L_2}v}(\xi_2,q_2,\tau_2) d\nu.
\end{equation}
Thus, the Caucy-Schwarz inequality implies that 
\begin{equation} \label{BilinRT.12}
J_{N,N_1,N_2}^{L,L_1,L_2} \lesssim L^{-\frac12+2\delta}L_1^{-\frac12-\delta}L_2^{-\frac12-\delta}\|(P_{N_1}Q_{L_1}u) (P_{N_2}Q_{L_2}v)\|_{L^2}\|w\|_{L^2},
\end{equation}
where we used the bound $|\xi| \le N \sim N_1\sim N_2$ and $s\ge 1$. This time, we observe that 
\begin{displaymath} 
\Big|\frac{\partial^2H}{\partial q_1^2} (\xi,\xi_1,q,q_1)\Big|=2|\xi| \gtrsim 1.
\end{displaymath}
in order to estimate $\|(P_{N_1}Q_{L_1}u) (P_{N_2}Q_{L_2}v)\|_{L^2}$. Then, since $|\xi_1|\wedge |\xi_2| \le 1$, it follows from Lemma \ref{basicI}, estimates \eqref{basicIII2}, \eqref{BilinStrichartzI3} and \eqref{BilinStrichartzI4}   that
\begin{equation} \label{BilinRT.13} 
\begin{split}
\|(P_{N_1}Q_{L_1}u) &(P_{N_2}Q_{L_2}v)\|_{L^2} \\ & \lesssim (L_1 \wedge L_2)^{\frac12}\big(1+(L_1 \vee L_2)^{\frac14}\big) \|P_{N_1}Q_{L_1}u\|_{L^2}\|P_{N_2}Q_{L_2}v\|_{L^2}.
\end{split}
\end{equation}
Therefore, we deduce combining \eqref{BilinRT.12} and \eqref{BilinRT.13} and summing over $L, \ L_1, \ L_2$ and $N\sim N_1\sim N_2$ (here we use the Cauchy-Schwarz inequality in $N_1$) that
\begin{equation} \label{BilinRT.14} 
J^2_{HH \to H} \lesssim \|u\|_{L^2}\|v\|_{L^2}\|w\|_{L^2}.
\end{equation}

\noindent \textit{2.3 Estimate for $J_{HH \to H}$ in the region $|\xi_i| \ge 100$ for $i=1,2,3$.}  We denote
 by $J_{HH \to H}^3$ the restriction of $J_{HH \to H}$ to this region. Once again, we use dyadic decompositions  on the variables $\sigma$, $\sigma_1$ and $\sigma_2$ as in \eqref{BilinRT.11}. In order to simplify the notations, we will denote $(\xi,q,\tau)=(\xi_0,q_0,\tau_0)$. Next, for $0<\delta \ll 1$, we split the integration domain in the following subregions 
 \begin{displaymath}
 \begin{split}
 \mathcal{F}_{3.1} =\big\{& (\xi,\xi_1,\tau,\tau_1,q,q_1) \in \mathbb R^4 \times \mathbb Z^2 \ : |\xi_i| \ge 100, \ \forall i \in \{0,1,2\} \\ &\ \text{and} \ \exists \, (i,j) \in \{0,1,2\}\ \text{with} \ \big| |(\xi_i,q_i)|^2 -|(\xi_j,q_j)|^2\big| \ge NL^{6\delta} \ \big\},
 \end{split}
 \end{displaymath}
 \begin{displaymath}
 \begin{split}
 \mathcal{F}_{3.2} =\big\{& (\xi,\xi_1,\tau,\tau_1,q,q_1) \in \mathbb R^4 \times \mathbb Z^2 \ : |\xi_i| \ge 100, \ \forall i \in \{0,1,2\} \\ &\ \text{and} \  \big| |(\xi_i,q_i)|^2 -|(\xi_j,q_j)|^2\big| \le NL^{6\delta}, \ \forall \, (i,j) \in \{0,1,2\} \ \big\}.
 \end{split}
 \end{displaymath}
 and denote by $J_{HH \to H}^{3,1}$, respectively $J_{HH \to H}^{3,2}$, the restriction of $J_{HH \to H}$ to $\mathcal{F}_{3.1}$, respectively $\mathcal{F}_{3.2}$. \\ 
 
\noindent \textit{2.3.1 Estimate for $J_{HH \to H}^{3,1}$.} Without loss of generality, we can assume that 
\begin{equation} \label{BilinRT.15} 
\big| |(\xi,q)|^2 -|(\xi_1,q_1)|^2\big| \ge NL^{6\delta} .
\end{equation}
By using the Cauchy-Schwarz inequality and the fact that $|\xi| \lesssim N \sim N_1 \sim N_2$ and $s \ge 1$, we obtain that 
\begin{equation} \label{BilinRT.16}
J_{N,N_1,N_2}^{L,L_1,L_2} \lesssim L^{-\frac12+2\delta}L_1^{-\frac12-\delta}L_2^{-\frac12-\delta}
\big\|(\widetilde{P_{N_1}Q_{L_1}u}) (P_{N}Q_{L}w)\big\|_{L^2} \|P_{N_2}Q_{L_2}v\|_{L^2},
\end{equation}
where $\tilde{f}(\xi,q,\tau)=f(-\xi,-q,-\tau)$.  Moreover, we observe arguing exactly as in the proof of Proposition \ref{BilinStrichartzI} and by using \eqref{BilinRT.15} that
\begin{equation} \label{BilinRT.17}
\begin{split}
\|(&\widetilde{P_{N_1}Q_{L_1}u}) (P_{N}Q_{L}w)\|_{L^2} \\ & \lesssim
\frac{(N_1 \wedge N_2)^{\frac12}}{N^{\frac12}L^{3\delta}}(L_1 \vee
L)^{\frac12}(L_1 \wedge L)^{\frac12}\|P_{N_1}Q_{L_1}u\|_{L^2}
\|P_{N}Q_{L}w\|_{L^2}.
\end{split}
\end{equation}
Therefore, we deduce combining \eqref{BilinRT.16} and \eqref{BilinRT.17} and summing over $L, \ L_1, \ L_2$ and $N \sim N_1 \sim N_2$ (by using the Cauchy-Schwarz inequality in $N$) that 
\begin{equation} \label{BilinRT.18} 
J^{3,1}_{HH \to H} \lesssim \|u\|_{L^2}\|v\|_{L^2}\|w\|_{L^2}.
\end{equation}

\noindent \textit{2.3.2 Estimate for $J_{HH \to H}^{3,2}$.} In the region $\mathcal{F}_{3,2}$, it holds that 
\begin{equation} \label{BilinRT.19}
\big| |(\xi_i,q_i)|^2 -|(\xi_j,q_j)|^2\big| \le NL^{6\delta}, \ \forall \, (i,j) \in \{0,1,2\} .
\end{equation} 
Then, we deduce from the definition of $H$ in \eqref{Resonance}, the definition $|(\xi,q)|=\sqrt{3\xi^2+q^2}$ and the assumptions \eqref{BilinRT.19} that 
\begin{equation} \label{BilinRT.20}
\begin{split}
H(\xi,\xi_1,q,q_1)&=(\xi-\xi_1-\xi_2)|(\xi_{i_0},q_{i_0})|^2-6\xi\xi_1\xi_2+\Theta(\xi,\xi_1,q,q_1) \\ 
&=-6\xi\xi_1\xi_2+\Theta(\xi,\xi_1,q,q_1),
\end{split}
\end{equation}
for $i_0 \in \{1,2,3 \}$ such that $|\xi_{i_0}|=\max\{|\xi_j|  \ : \ j=1,2,3\}$ and $\Theta(\xi,\xi_1,q,q_1)$ satisfies 
\begin{equation} \label{BilinRT.21} 
\big| \Theta(\xi,\xi_1,q,q_1)\big| \le \sum_{i \neq i_0}|\xi_i|\big| |(\xi_i,q_i)|^2 -|(\xi_j,q_j)|^2\big| \le |\xi_{med}|NL^{6\delta}.
\end{equation}
It follows combining \eqref{BilinRT.20} and \eqref{BilinRT.21} that 
\begin{equation} \label{BilinRT.22} 
\Big| H(\xi,\xi_1,q,q_1) \Big| \ge |\xi_{med}| \Big(6|\xi_{max}||\xi_{min}|-NL^{6\delta} \Big).
\end{equation}
Then, we subdivide the region $\mathcal{F}_{1.2}$ in the following subregions 
\begin{displaymath}
 \mathcal{F}_{3.2.1} =\big\{ (\xi,\xi_1,\tau,\tau_1,q,q_1) \in \mathcal{F}_{1.2} \ : |\xi_{max}||\xi_{min}| \ge NL^{6\delta} \big\},
 \end{displaymath}
 \begin{displaymath}
 \mathcal{F}_{3.2.2} =\big\{ (\xi,\xi_1,\tau,\tau_1,q,q_1) \in \mathcal{F}_{1.2} \ : |\xi_{max}||\xi_{min}| \le NL^{6\delta} \big\},
 \end{displaymath}
 and denote by $J_{HH\to H}^{3,2,1}$, respectively $J_{HH\to H}^{3,2,2}$, the restriction of $J_{HH \to H}^{3,2}$ to $\mathcal{F}_{3.2.1}$, respectively $ \mathcal{F}_{3.2.2}$. \\
 
 \noindent \textit{2.3.2.1 Estimate for $J_{HH\to H}^{3,2,1}$.} Due to \eqref{BilinRT.22}, we have that 
 \begin{equation} \label{BilinRT.23} 
 \max\{|\sigma|,|\sigma_1|,|\sigma_2| \} \gtrsim |\xi_{min}||\xi_{max}|^2,
 \end{equation}
 in $\mathcal{F}_{3.2.1}$. Without loss of generality\footnote{In the other cases we need to interpolate \eqref{BilinRT.25} with \eqref{BilinStrichartzI0} as previously.}, we assume that $\max\{|\sigma|,|\sigma_1|,|\sigma_2| \}=|\sigma|$.
Then, by using the Cauchy-Schwarz inequality, we deduce that 
\begin{equation} \label{BilinRT.24} 
J_{N,N_1,N_2}^{L,L_1,L_2} \lesssim  N_1^{-\frac12} L^{-\delta}L_1^{-\frac12-\delta}L_2^{-\frac12-\delta}\|(P_{N_1}Q_{L_1}u) (P_{N_2}Q_{L_2}v)\|_{L^2}\|w\|_{L^2}.
\end{equation}
where we used that $\big| |\xi| N_1^{\frac12-s} \big(|\xi_{max}|^2|\xi_{min}| \big)^{-\frac12+3\delta}\big| \lesssim 1$ for $s \ge 1$ and $0<\delta \ll 1$. Moreover, we use that 
\begin{displaymath} 
\Big|\frac{\partial^2H}{\partial \xi_1^2} (\xi,\xi_1,q,q_1)\Big|=6|\xi| \gtrsim 1,
\end{displaymath}
Lemma \ref{basicI}, estimates \eqref{basicIII1}, \eqref{BilinStrichartzI3} and \eqref{BilinStrichartzI4} lead to 
\begin{equation} \label{BilinRT.25} 
\begin{split}
\|(P_{N_1}Q_{L_1}u) &(P_{N_2}Q_{L_2}v)\|_{L^2} \\ & \lesssim N_1^{\frac12}(L_1 \wedge L_2)^{\frac12}(L_1 \vee L_2)^{\frac14} \|P_{N_1}Q_{L_1}u\|_{L^2}\|P_{N_2}Q_{L_2}v\|_{L^2}.
\end{split}
\end{equation}
We deduce combining \eqref{BilinRT.24} and \eqref{BilinRT.25} and summing over $L,$ $L_1$, $L_2$ and using the Cauchy-Schwarz inequality in $ N_1 \sim N_2$ that 
\begin{equation} \label{BilinRT.26} 
J_{HH\to H}^{3,2,1} \lesssim \|u\|_{L^2}\|v\|_{L^2}\|w\|_{L^2}.
\end{equation}

\noindent \textit{2.3.2.2 Estimate for $J_{HH\to H}^{3,2,2}$.} This time, we perform also dyadic decompositions in the $\xi_1$, $\xi_2$ and $\xi$ variables. We denote by $R_{K}$ the Littlewood-Paley projectors , \textit{i.e.} $R_K$ is defined by 
$R_Ku=\mathcal{F}_x^{-1}\big(\phi(K^{-1}\xi)\mathcal{F}_x(u) \big)$, for any dyadic number $K \ge 1$. Then, we have that
\begin{equation} \label{BilinRT.27}
J_{N,N_1,N_2}^{L,L_1,L_2}=\sum_{100 \le K_1,K_2,K_3 \lesssim N}J_{N,N_1,N_2}^{L,L_1,L_2}(K_1,K_2,K_3),
\end{equation}
where $J_{N,N_1,N_2}^{L,L_1,L_2}(K_1,K_2,K_3)$ is defined by the expression
\begin{displaymath} 
\begin{split}
J_{N,N_1,N_2}^{L,L_1,L_2}(K_1,K_2,K_3)&=\sum_{q,  q_1 \in \mathbb Z^2}\int_{\mathbb R^4}\Gamma^{\xi_1,q_1,\tau_1}_{\xi,q,\tau}\big(P_NQ_LR_Kw\big)^{\wedge}(\xi,q,\tau)\\ & \times\big(P_{N_1}Q_{L_1}R_{K_1}u\big)^{\wedge}(\xi_1,q_1,\tau_1)\big(P_{N_2}Q_{L_2}R_{K_2}v\big)^{\wedge}(\xi_2,q_2,\tau_2) d\nu.
\end{split}
\end{displaymath}
By using the Cauchy-Schwarz inequality, we can bound $J_{N,N_1,N_2}^{L,L_1,L_2}(K_1,K_2,K_3)$  by
\begin{equation} \label{BilinRT.28}
 KK_{min}^{-1}K_{max}^{-1}N^{1-s} L^{-\frac12+8\delta}L_1^{-\frac12-\delta}L_2^{-\frac12-\delta}\|(P_{N_1}Q_{L_1}u) (P_{N_2}Q_{L_2}v)\|_{L^2}\|w\|_{L^2}.
\end{equation}
since $K_{min}K_{max} \lesssim NL^{6\delta}$ in the region $\mathcal{F}_{3,2,2}$. Moreover, noticing that 
\begin{displaymath} 
\Big|\frac{\partial^2H}{\partial q_1^2} (\xi,\xi_1,q,q_1)\Big|=6|\xi| \gtrsim K,
\end{displaymath}
Lemma \ref{basicI}, estimates \eqref{basicIII2}, \eqref{BilinStrichartzI3} and \eqref{BilinStrichartzI4} yield
\begin{equation} \label{BilinRT.29} 
\begin{split}
&\|(P_{N_1}Q_{L_1}u) (P_{N_2}Q_{L_2}v)\|_{L^2} \\ & \lesssim (K_1 \wedge K_2)^{\frac12}(L_1 \wedge L_2)^{\frac12}(1+K^{-\frac14}(L_1 \vee L_2)^{\frac14}) \|P_{N_1}Q_{L_1}u\|_{L^2}\|P_{N_2}Q_{L_2}v\|_{L^2}.
\end{split}
\end{equation}
Now, we observe that 
\begin{equation} \label{BilinRT.30} 
K (K_1 \wedge K_2)^{\frac12}K_{min}^{-1}K_{max}^{-1}\lesssim K_{min}^{-\frac12}.
\end{equation}
Assume without loss of generality that $K_{min}=K$. Therefore, it follows combining \eqref{BilinRT.27}--\eqref{BilinRT.30}, summing over $L$, $L_1$, $L_2$ and $K_{min}$ and applying Cauchy-Schwarz in $K_1 \sim K_2$ and in $N_1 \sim N_2$ that 
\begin{equation} \label{BilinRT.31} 
\begin{split}
J_{HH\to H}^{3,2,2} &\lesssim \sum_{N \sim N_1 \sim N_2}\sum_{100 \le K_1 \sim K_2 \lesssim N}\|P_{N_1}R_{K_1}u\|_{L^2}\|P_{N_2}R_{K_2}v\|_{L^2}\|P_{N}w\|_{L^2}
\\ & \lesssim \sum_{ N_1 \sim N_2}\Big( \sum_{K_1 \le N_1}\|P_{N_1}R_{K_1}u\|_{L^2}^2\Big)^{\frac12}
\Big( \sum_{K_2 \le N_2}\|P_{N_2}R_{K_2}v\|_{L^2}^2\Big)^{\frac12}\|w\|_{L^2} \\ & \lesssim 
\Big( \sum_{N_1}\|P_{N_1}u\|_{L^2}^2\Big)^{\frac12}\Big( \sum_{N_2}\|P_{N_2}v\|_{L^2}^2\Big)^{\frac12}\|w\|_{L^2} \\ & \lesssim  \|u\|_{L^2}\|v\|_{L^2}\|w\|_{L^2}.
\end{split}
\end{equation}

Thus, we deduce combining \eqref{BilinRT.10}, \eqref{BilinRT.14}, \eqref{BilinRT.18}, \eqref{BilinRT.26} and \eqref{BilinRT.31} that 
\begin{equation} \label{BilinRT.32} 
J_{HH\to H} \lesssim \|u\|_{L^2}\|v\|_{L^2}\|w\|_{L^2}.
\end{equation}

\noindent \textit{2.3 Estimate for $J_{LL \to L}$.} We get arguing exactly as in the cases \textit{2.1}  and   \textit{2.2} that 
\begin{equation} \label{BilinRT.33} 
J_{LL\to L} \lesssim \|u\|_{L^2}\|v\|_{L^2}\|w\|_{L^2}.
\end{equation}

Finally, we conclude the proof of estimate \eqref{BilinRT.1} gathering  \eqref{BilinRT.5}, \eqref{BilinRT.6}, \eqref{BilinRT.32} and \eqref{BilinRT.33}.
\end{proof} 
 
 We observe that the proof of Theorem \ref{theoRT} follows from Proposition \ref{BilinRT} and the linear estimates \eqref{prop1.1.2}, \eqref{prop1.2.1} and \eqref{prop1.3b.1} by using a fixed point argument in a closed ball of $X^{s,\frac12+\delta}_T$ (see for example \cite{MST} for more details).
  \section{Global existence in  $ H^s(\R^3) $ for $ s>1$}
 In this section we prove the global well-posedness in $ H^s(\R^3) $ for $ s>1$. To this aim we combine  the conservation laws \eqref{M} and \eqref{H}, a well-posedness result in the Besov space $ B^{1,1}_2 (\R^3)$  and follow ideas in \cite{Bou} (see \cite{Plan} for the same kind of arguments). One crucial tool will also be  the atomic spaces $ U^2 $ and $ V^2 $  introduced by Koch-Tataru in \cite{KT}. 
 Recall that  the Besov space $ B^{1,1}_2(\R^3) $ is the  space of all functions $ g\in {\mathcal S}'(\R^3) $ such that 
 \begin{equation}\label{defB}
 \|g\|_{B^{1,1}_2} := \sum_{N} N \|P_N g \|_{L^2} < \infty \; , 
 \end{equation}
where the Fourier projector $ P_N $ is the $ \R^3 $-version of the one defined in \eqref{proj}.
 
 Before stating the local existence theorem let us give the definition of the doubling time that will appear in the statement of this theorem. Let be given  a Cauchy problem locally well-posed in some Banach space $ B $ with a minimum time of existence depending on the $ B$-norm of the initial data. For any $ u_0\in B$ we call \lq\lq doubling time\rq\rq , the infinite or finite positive real number
 $$
 T(u_0) = \sup \Bigl\{t>0 \, :\, \|u(\theta)\|_{B} \le 2 \|u_0\|_{B}  \mbox{ on } [0,t] \; \Bigr\} \; .
 $$
 \begin{theorem}\label{prop2}
 The Cauchy problem  associated to \eqref{ZK} is locally well-posed in $ H^s(\R^3) $ for $ s>1 $. Moreover,  there exists $ C>0 $ such that for any $ u_0\in H^s(\R^3) $, the doubling time satisfies 
 \begin{equation} \label{Z3}
 T(u_0)\ge  \frac{C}{\|u_0\|_{B^{1,1}_2}^2} \; .
 \end{equation}
 \end{theorem}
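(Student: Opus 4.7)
The plan is to run a Picard iteration for the Duhamel form of \eqref{ZK} in an atomic space at regularity $B^{1,1}_2$ on a time interval of length $T$, and read the lower bound \eqref{Z3} directly off the contraction radius. The local well-posedness in $H^s(\R^3)$ for $s>1$ is essentially already in Ribaud-Vento \cite{RV}; the novelty is the quantified doubling-time bound in the weaker Besov norm $\|u_0\|_{B^{1,1}_2}$. A scaling check confirms that $T^{1/2}\|u_0\|_{B^{1,1}_2}$ is dimensionless under the symmetry $u(x,t)\mapsto\lambda^2 u(\lambda x,\lambda^3 t)$ of \eqref{ZK}, which is consistent with the exponent $-2$ in \eqref{Z3} and suggests the contraction argument should yield exactly a $T^{1/2}$ gain.

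First I would introduce the Koch-Tataru atomic spaces $U^2_S$ and $V^2_S$ associated to the unitary group $S(t)=e^{-t\partial_x\Delta}$ on $\R^3$, and set up a resolution space of Besov type
\begin{equation*}
Y_T := \Bigl\{ u \,:\, \|u\|_{Y_T} := \sum_{N\geq 1} N\,\|P_N u\|_{U^2_S(0,T;L^2)} < \infty \Bigr\},
\end{equation*}
with associated predual $DY_T$. Standard Koch-Tataru machinery then provides the embedding $Y_T \hookrightarrow C([0,T]; B^{1,1}_2(\R^3))$, the homogeneous linear estimate $\|S(\cdot)u_0\|_{Y_T}\lesssim \|u_0\|_{B^{1,1}_2}$, and the inhomogeneous estimate $\bigl\|\int_0^t S(t-t')F(t')\,dt'\bigr\|_{Y_T}\lesssim \|F\|_{DY_T}$. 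All linear and bilinear Strichartz estimates for the free flow $S(t)\varphi$ transfer to $V^2_S$ functions with only a logarithmic loss; in particular the 3D analogue of Proposition \ref{BilinStrichartzI} is available in this framework after replacing $\mu$ by $\eta=(\mu_1,\mu_2)\in\R^2$ in all computations.

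The crux of the argument is a bilinear estimate of the form
\begin{equation*}
\bigl\|\partial_x(uv)\bigr\|_{DY_T} \lesssim T^{1/2}\,\|u\|_{Y_T}\,\|v\|_{Y_T},
\end{equation*}
with a genuine $T^{1/2}$ factor produced by Hölder's inequality in time after reducing the product to an $L^p_t L^q_{xy}$ Strichartz norm with $p<\infty$. I would decompose into the dyadic frequency regimes $LL\to L$, $LH\to H$ (and symmetric $HL\to H$), $HH\to L$ and $HH\to H$, following the scheme of Sections 4 and 5. The high-low interactions are handled by the bilinear Strichartz estimate, whose gain $(N_1\wedge N_2)^{1/2}/(N_1\vee N_2)$ exactly offsets the derivative loss from $\partial_x$ and permits dyadic summation at the Besov level $B^{1,1}_2$. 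The $HH\to H$ interactions form the main obstacle and are resolved, exactly as in Propositions \ref{BilinR2} and \ref{BilinRT}, by a case analysis based on the sign structure of the resonance function $H$ and the geometry of the degeneracy locus of the Hessian of $w(\xi,\eta)=\xi^3+\xi|\eta|^2$, using either a direct lower bound on $H$, a bounded-below partial derivative of $H$, or a Strichartz bound away from the degeneracy.

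Once this bilinear estimate is established, the Picard iteration on the closed ball of radius $2\|u_0\|_{B^{1,1}_2}$ in $Y_T$ converges as soon as $T^{1/2}\,\|u_0\|_{B^{1,1}_2}$ is sufficiently small, i.e.\ for $T\leq C\|u_0\|_{B^{1,1}_2}^{-2}$, which is precisely \eqref{Z3}; the embedding $Y_T\hookrightarrow L^\infty_T B^{1,1}_2$ then yields $\|u(t)\|_{B^{1,1}_2}\leq 2\|u_0\|_{B^{1,1}_2}$ on $[0,T]$ automatically. Local well-posedness in $H^s$ for $s>1$ follows by rerunning the same fixed point at the higher regularity $H^s$ (persistence). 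The hardest part will be extracting the $T^{1/2}$ factor without losing the critical $B^{1,1}_2$ scaling: since $U^2_S$ and $V^2_S$ are scale-invariant and afford no free $T^\alpha$ gain, one must carefully interpolate between an endpoint Strichartz bound with $p<\infty$ and the trivial $L^\infty_t L^2_{xy}$ estimate to produce the subcritical gain while keeping control of the high-high to high summation.
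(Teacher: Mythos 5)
Your overall architecture matches the paper's: Koch--Tataru spaces $U^2_S/V^2_S$, a Besov-$\ell^1$ resolution space $\sum_N N\|P_N u\|_{U^2_S}$, linear and Duhamel estimates of transference type, the bilinear gain $N_2/N_1$ for high-low interactions, and a contraction on the ball of radius $2\|u_0\|_{B^{1,1}_2}$ for $T\lesssim \|u_0\|_{B^{1,1}_2}^{-2}$. However, two steps as you describe them would not go through. First, the source of the factor $T^{1/2}$: you propose to extract it from the inputs $u,v$ by H\"older in time on a finite-$p$ Strichartz norm, or by interpolating an endpoint Strichartz bound with $L^\infty_t L^2$, while correctly noting that $B^{1,1}_2$ is critical for the ZK scaling --- which is precisely why no such subcritical gain is available from the inputs. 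The paper takes the gain from the \emph{dual} function instead: by Proposition \ref{prop3} the Duhamel term in $U^2_S$ is controlled by $\sup_{\|w\|_{V^2_S}=1}|\int fw|$, and since $V^2\hookrightarrow L^\infty_tL^2$ one has $\|\psi(\cdot/T)w\|_{L^2(\R^4)}\lesssim T^{1/2}\|w\|_{V^2}$ (estimate \eqref{zzz}); the $T^{1/2}$ is harvested by placing $w$ in $L^2_{t,x}$ on its length-$T$ support while the inputs are transferred to $U^2_S$ \emph{losslessly} via Proposition \ref{prop4}. Your remark that the bilinear estimates ``transfer to $V^2_S$ with only a logarithmic loss'' is exactly the trap the paper warns against: a $|\ln T|$ loss degrades \eqref{Z3} to $T\gtrsim \|u_0\|_{B^{1,1}_2}^{-2}|\ln\|u_0\|_{B^{1,1}_2}|^{-2}$, which is too weak for the globalization in Theorem \ref{theo3}.

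Second, the $HH\to H$ interactions. You plan to rerun the full resonance-function case analysis of Propositions \ref{BilinR2} and \ref{BilinRT}, including the Carbery--Kenig--Ziesler estimate near the degeneracy locus of the Hessian of $w(\xi,\eta)=\xi^3+\xi|\eta|^2$. That input is not available in 3D (Theorem \ref{CKZ} concerns hypersurfaces in $\R^3$, i.e.\ two-dimensional symbols) and, more importantly, it is not needed: because $Y^{1,1}$ carries a full derivative in an $\ell^1$ dyadic sum, the block $N_1\sim N_2\gtrsim N$ closes with the plain $L^4(\R^4)$ Strichartz estimate of Linares--Saut transferred to $U^2_S$ (first part of Lemma \ref{lem5}), the factor $N^2$ coming from $N\cdot\partial_x$ being absorbed by $\sum_{N\lesssim N_1}N^2\lesssim N_1^2$ against the two copies of $N_1\|P_{N_1}\cdot\|_{U^2_S}$. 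Only the high-low block uses the ($\R^3$-version of the) bilinear estimate \eqref{BilinStrichartzI1}, combined with the polynomial decay of the modulation projections of $\psi S(\cdot)g$. Relocating the $T^{1/2}$ to the dual function and replacing your $HH\to H$ plan by the crude $L^4\times L^4$ bound would turn your outline into the paper's argument.
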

 \begin{remark} The local well-posedness of ZK in $ H^s(\R^3) $ for $ s>1$ was already proven in  \cite{RV}. The only new result here is the estimate from below of the doubling time.
 \end{remark}
 With Theorem \ref{prop2} in hand we will now prove the Theorem \ref{theo3}. The proof of  Theorem \ref{prop2} is postponed at the end of this section.  \\
 {\bf Proof of Theorem \ref{theo3} :} Let us fix $ s>1 $.  For any $ g\in H^s(\R^3) $ and any $ k\ge 1 $  it holds
 \begin{eqnarray*}
  \|g\|_{B^{1,1}_2}&= &\sum_{j=0}^{k-1} 2^{j} \| P_{2^j} g \|_{L^2}+ \sum_{j=k}^\infty  2^{j(1-s)}  2^{js} \| P_{2^j} g \|_{L^2}\\
  & \lesssim & \sqrt{k} \|g\|_{H^1} + 2^{k(1-s)} \|g\|_{H^s} \; .
 \end {eqnarray*}
 Therefore, taking $ k=\frac{\ln(1+\|g\|_{H^s})}{(s-1)\ln 2} $ we get 
  \begin{equation}\label{z1}
 \|g\|_{B^{1,1}_2}\le C_s \, \Bigl( 1+ \|g\|_{H^1}  \ln (1+\|g\|_{H^s})^{1/2} \Bigr)
 \end{equation} 
 for some $ C_s>0 $. \\
 Now, let $ u_0\in H^s(\R^3) $ and $ u $ be  the solution of ZK emanating from $ u_0 $. Combining Proposition \ref{prop2} and \eqref{z1}  we get 
$$
 T(u_0) \ge  \frac{C}{1+\|u_0\|_{H^1}^2 \ln (1+\|u_0\|_{H^s})}\; .
 $$
 If $ T(u_0)=+\infty $ then we are done. Otherwise we set $u_1:= u(T(u_0)) $. In the same way as above we have 
 $$
  T(u_1) \ge  \frac{C}{1+\|u_1\|_{H^1}^2  \ln (1+\|u_1\|_{H^s})}\; .
 $$
 Now,  from the definition of the doubling time,  it holds 
  $\|u_1\|_{H^s}= 2 \|u_0\|_{H^s}$ and from the conservation of the quantities $ M(u) $ and $ H(u) $ and classical Sobolev inequalities we infer that 
 $$
 \|u_1\|_{H^1}^2 \le C'  E(u_1) =C' E(u_0) \; ,
 $$
 for some positive constant $ C' $ independent of $ u_1$. 
 Therefore, setting $ E_0:=E(u_0) $, we obtain 
 $$
  T(u_1) \ge \frac{C}{1+C' E_0 \ln (1+2\|u_0\|_{H^s})}\; .
 $$
 Repeating this argument n-times (assuming that all doubling time $T(u_k) $, $ k=1,2,..,n-1$, are finite, since otherwise we are done), we get 
  \begin{equation}\label{z2}
  T(u_n) \ge \frac{C}{1+C' E_0 \ln (1+2^n\|u_0\|_{H^s})}\gtrsim\frac{1}{n}\; .
  \end{equation}
  Since $ \sum 1/n =+\infty $ this ensures that for any given $ T>0 $ there exists $ n\ge 1 $ such that 
  $\displaystyle \sum_{ k=0}^{n-1} T(u_k)>T $ 	and thus the solution is global in time. 
  \begin{remark}
  Actually, it is not too hard to check that the lower bound \eqref{z2}  leads to a double exponential upper bound on the solution $ u$, i.e. there exists constants $ K_1$, $ K_2 $ and $ K_3 $ only depending on $ \|u_0\|_{H^s} $ such that for all $ t\ge 0 $,
  $$
  \|u(t)\|_{H^s} \le K_1 \exp\Bigl(K_2\exp(K_3 t) \Bigr) \; .
  $$
  \end{remark}
\subsection{Proof of Theorem \ref{prop2}}
\subsubsection{Resolution spaces}
We start by recalling the definition of the  function spaces $ U^2 $ and $ V^2 $ (see \cite{KT} and  \cite{HHK}).
\begin{definition}
Let ${\mathcal Z} $ be the set of finite partitions $ -\infty=t_0<t_1<\cdot\cdot <t_K=+\infty $. 
For $ \{t_k\}_{k=0}^K \in {\mathcal Z}$ and $\{\phi_k\}_{k=0}^{K-1} \subset L^2(\R^3)$ with  $\sum_{k=0}^{K-1} \| \phi_k\|_{L^2}^2 =1 $ and $ \phi_0=0 $ we call the function $ a\, :\, \R \to L^2(\R^3) $ given by 
$$
a=\sum_{k=1}^K {1\! \! 1}_{[t_{k-1},t_k)} \phi_{k-1}
$$
a $ U^2 $-atom and we define the atomic space 
$$
U^2:=\Bigl\{u=\sum_{j=1}^\infty \lambda_j a_j \ : \ a_j \; U^2\mbox{-atom} \mbox{ and } \lambda_j\in \R \mbox{ with } 
 \sum_{j=1}^\infty |\lambda_j | <\infty  \Bigr\} 
 $$
 with norm 
 $$
 \|u\|_{U^2} := \inf \Bigl\{ \sum_{j=1}^\infty |\lambda_j |\ : \  u=\sum_{j=1}^\infty \lambda_j a_j \mbox{ with }\, \lambda_j \in \R \mbox{ and } a_j \, U^2\mbox{-atom} \Bigr\} 
 $$
 The function space $ V^2$ is defined as the normed space of all functions $ v\, :\, \R \to L^2(\R^3) $ such that $\lim_{t\to \mp \infty}  v(t) $ exists and for which the norm 
 $$
 \|v\|_{V^2}:=\sup_{\{t_k\}_{k=0}^K \in {\mathcal Z}} \Bigl( \sum_{k=1}^K \|v(t_k)-v(t_{k-1})\|_{L^2}^2 \Bigr)^{1/2}
 $$
 is finite, where we use the convention that $v(-\infty)=\lim_{t\to -\infty} v(t) $ and $ v(+\infty)=0 $. 
\end{definition}
The  spaces $ U^2 $ and $ V^2 $ are Banach spaces. They   will serve as   substitutes of the Besov  type spaces $ \tilde{B}^{1/2,1}_2(L^2(\R^3))  $ and $  \tilde{B}^{1/2,\infty}_2 (L^2(\R^3))$ that where first used in \cite{tataru} in the context of Bourgain's method. Denoting by $ \Delta_j $ the Fourier multiplier by\footnote{See Section 2 for the definition of $ \phi $ and $\eta$.}  $ \phi(2^j \tau) $ for $ j\ge 1 $ and $ \eta(\tau)$ for $ j=0$, these last spaces are respectively endowed with the  norms
$$
\|u\|_{\tilde{B}^{1/2,1}_2 (L^2(\R^3))} :=\sum_{j\ge 0} 2^{j/2} \| \Delta_j u \|_{L^2(\R^4)}$$  
and 
$$
 \|u\|_{\tilde{B}^{1/2,\infty}_2 (L^2(\R^3))} :=\sup_{j\ge 0} 2^{j/2} \| \Delta_j u \|_{L^2(\R^4)} \; .
$$
The crucial point for us will be that, since $ V^2\hookrightarrow  L^\infty_t L^2 $, for a smooth function $ \psi\in C^\infty_c (\R) $ and any $ 0<T<1 $,  it holds
\begin{equation} \label{zzz}
\| \psi(\cdot/T) f \|_{L^2(\R^4))} \lesssim T^{1/2} \| f\|_{V^2}, \quad \forall f\in  C^\infty_c (\R^4),
\end{equation}
whereas we only have 
$$
\| \psi(\cdot/T) f \|_{L^2(\R^4))} \lesssim T^{1/2} |\ln T| \| f\|_{ \tilde{B}^{1/2,\infty}_2 (L^2(\R^3))} , \quad \forall f\in  C^\infty_c (\R^4) \; .
$$
This last inequality would  lead to a lower bound 
$$
T(u_0) \gtrsim \frac{1}{\|u_0\|_{B^{1,1}_2}^2 | \ln (\|u_0\|_{B^{1,1}_2})|^2}\;
$$
of the doubling time that will not be sufficient to get the global existence result. This is the reason why we will work with the couple of spaces $ U^2 $ and   $ V^2 $ 
and not  with the more usual couple of spaces $  \tilde{B}^{1/2,1}_2 (L^2(\R^3)) $ and $   \tilde{B}^{1/2,\infty}_2 (L^2(\R^3)) $.\\
Then denoting by $S(t):=e^{-t\partial_x\Delta} $ the linear group associated with ZK, we define the spaces 
 $$ U^2_S= S(\cdot) U^2 \mbox{  with norm } \|u\|_{U^2_S}=\| S(-\cdot) u\|_{U^2} $$
  $$\hspace*{-6mm} \mbox{and  }V^2_S= S(\cdot) V^2  \mbox{  with norm }  \|u\|_{V^2_S}
 =\| S(-\cdot) u\|_{V^2} \;  . $$
 The properties of these spaces  we need in the sequel are summarized in the following propositions (see \cite{HHK}).
 \begin{proposition} \label{prop3}
 Let $ \psi \in C^\infty_c(\R)$ then  
 $$
 \| \psi S(\cdot) u_0\|_{U^2_S}\lesssim \|u_0 \|_{L^2} , \quad  \forall u_0\in L^2(\R^3) 
$$
and 
$$
\Bigl\|\psi(t)\int_0^t S(t-t') f(t',\cdot) \, dt' \Bigr\|_{U^2_S} \lesssim \sup_{\|v\|_{V^2_S}=1} \Bigl| \int_{\R^4} f v \Bigr| , \quad  \forall f \in C^\infty_c(\R^4) \; .
$$
 \end{proposition}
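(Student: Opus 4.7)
\textbf{Proof proposal for Proposition \ref{prop3}.}

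The plan for the homogeneous estimate is to reduce it to an atomic decomposition of $t\mapsto \psi(t)u_0$. Since $S(t)$ is $L^2$-unitary one has
\[
\|\psi S(\cdot)u_0\|_{U^2_S}=\|S(-\cdot)[\psi(t)S(t)u_0]\|_{U^2}=\|\psi(\cdot)u_0\|_{U^2},
\]
so the task is to show $\|\psi(\cdot)u_0\|_{U^2}\lesssim_\psi \|u_0\|_{L^2}$. I would use the fundamental theorem of calculus: because $\psi\in C^\infty_c(\R)$, choosing $M>0$ with $\mathrm{supp}\,\psi\subset(-M,M)$ one has $\psi(t)=-\int_t^M \psi'(s)\,ds=\int_\R \psi'(s)\,{1\!\!1}_{[s,M)}(t)\,ds$. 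For any finite partition $-M=s_0<\dots<s_N=M$ the Riemann sum
\[
\Sigma_N(t)u_0 \;=\; \sum_{k=1}^{N}\psi'(s_k)(s_k-s_{k-1})\,{1\!\!1}_{[s_k,M)}(t)\,u_0
\]
is a finite linear combination of $U^2$-atoms (each ${1\!\!1}_{[s_k,M)}u_0/\|u_0\|_{L^2}$ is a $U^2$-atom in the obvious partition). Its $U^2$-coefficient sum is controlled by $\|u_0\|_{L^2}\sum_k|\psi'(s_k)|(s_k-s_{k-1})$, which converges to $\|\psi'\|_{L^1}\|u_0\|_{L^2}$. Sending $N\to\infty$ in the Banach space $U^2$ (the sums are Cauchy because of smoothness of $\psi$) gives the first estimate with constant $\lesssim \|\psi'\|_{L^1}$.

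For the retarded estimate the strategy is the standard Koch--Tataru duality. Recall that the $U^2$-$V^2$ duality (Theorem 2.8 of \cite{HHK}) asserts
\[
\|u\|_{U^2}\;\approx\;\sup\Bigl\{\bigl|B(u,v)\bigr|:\;v\in V^2,\ \|v\|_{V^2}\le 1\Bigr\},
\]
where $B(\cdot,\cdot)$ is the usual (Stieltjes) pairing. Transporting this by $S(\cdot)$ gives the corresponding characterization for $\|\cdot\|_{U^2_S}$, with a pairing $\langle\!\langle u,v\rangle\!\rangle$ that for smooth enough $u,v$ reduces to $\int_\R \langle u(t),v(t)\rangle_{L^2_{xyz}}\,dt$ plus boundary contributions.

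Thus for $I(t):=\psi(t)\int_0^t S(t-t')f(t')\,dt'$ I would pick $v\in V^2_S$ with $\|v\|_{V^2_S}\le 1$ and compute
\[
\langle\!\langle I,v\rangle\!\rangle=\int_\R \psi(t)\Bigl\langle \int_0^t S(t-t')f(t')\,dt',\,v(t)\Bigr\rangle_{L^2_{xyz}}dt,
\]
then exchange the order of integration via Fubini (valid since $f\in C^\infty_c$ and $\psi\in C^\infty_c$). Using unitarity of $S$, the inner integral becomes $\int f(t')\cdot \widetilde v(t')\,dt'$ with
\[
\widetilde v(t'):=\int_{t'}^{\infty}\psi(t)\,S(-(t-t'))^*v(t)\,dt
\]
(suitably interpreted). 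The crucial step is to show that $\widetilde v$, viewed in the frame of $S$, belongs to $V^2_S$ with $\|\widetilde v\|_{V^2_S}\lesssim_\psi \|v\|_{V^2_S}$; this uses that $\psi$ is a compactly supported multiplier (handled by a BV-argument on $\psi$ analogous to the first part) and that $S(\cdot)v$ lies in $V^2$. Taking the supremum over $v$ then yields the right-hand side of the desired estimate.

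The main obstacle I anticipate is this last closure step: checking that the truncated Duhamel pairing transforms a $V^2_S$ test function into another $V^2_S$ test function, uniformly in $v$, with a constant depending only on $\psi$. This is where the tameness of $V^2$ under multiplication by a $BV$ cutoff is used, and where one must be careful with the correct version of the $U^2$-$V^2$ duality pairing (the Stieltjes pairing) rather than the naive $L^2$-pairing. Once that closure is in place, the two estimates follow directly from the definitions and the duality theorem of \cite{HHK,KT}.
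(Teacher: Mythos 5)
The paper itself offers no proof of Proposition \ref{prop3}; it is quoted from Hadac--Herr--Koch \cite{HHK} (their Propositions 2.10 and 2.11), so I compare your attempt with that standard argument. Your proof of the homogeneous estimate is correct and is essentially the standard one: after conjugating by $S$, the point is that $\psi u_0$ is a superposition $\int_{\R}\psi'(s)\,{1\!\!1}_{[s,M)}(\cdot)u_0\,ds$ of $U^2$-atoms with total coefficient mass $\|\psi'\|_{L^1}\|u_0\|_{L^2}$; equivalently, absolutely continuous $L^2$-valued functions vanishing at $-\infty$ with derivative in $L^1_tL^2$ lie in $U^2$. No issue there.

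The gap is in the retarded estimate, and it sits exactly where you anticipated. The $U^2$--$V^2$ duality is realized by the Riemann--Stieltjes pairing $B(u,v)=\lim\sum_k\langle u(t_k)-u(t_{k-1}),v(t_k)\rangle$, which for absolutely continuous $u$ equals $-\int\langle\partial_t u,v\rangle\,dt$; it is \emph{not} the pairing $\int\langle u,v\rangle\,dt$ that your displayed computation uses, and the difference is not a boundary term --- it is a full time derivative, so Fubini produces a different object. Writing $S(-t)I(t)=\psi(t)F(t)$ with $F(t)=\int_0^tS(-t')f(t')\,dt'$, one has $\partial_t\bigl(S(-t)I(t)\bigr)=\psi'(t)F(t)+\psi(t)S(-t)f(t)$, so the correct pairing splits into two terms. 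The term coming from $\psi S(-\cdot)f$ gives $\int\langle f,\psi v\rangle$ and is handled by the stability of $V^2$ under multiplication by the BV cutoff $\psi$, as you propose. But there is a second term, $\int\langle\psi'(t)F(t),S(-t)v(t)\rangle\,dt$, which your scheme does not produce and which cannot be absorbed into a single transformed test function of the form you wrote: it is estimated by $\|\psi'\|_{L^1}\,\|F\|_{L^\infty_tL^2}\,\|v\|_{V^2}$ together with the observation that $\|F(t)\|_{L^2}=\sup_{\|\phi\|_{L^2}=1}\bigl|\int f\cdot{1\!\!1}_{[0,t)}S(\cdot)\phi\bigr|\lesssim\sup_{\|w\|_{V^2_S}=1}\bigl|\int fw\bigr|$, since ${1\!\!1}_{[0,t)}S(\cdot)\phi$ is (a multiple of) a $U^2_S$-atom and $U^2_S\hookrightarrow V^2_S$. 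Correspondingly, the adjoint test function one must place in $V^2_S$ is $t'\mapsto\psi(t')v(t')+S(t')\int_{t'}^{\infty}\psi'(t)S(-t)v(t)\,dt$ (note the $\psi'$), not your $\widetilde v(t')=\int_{t'}^{\infty}\psi(t)S(t'-t)v(t)\,dt$, which is what the naive $L^2$ pairing yields. As written the argument therefore does not close; once the Stieltjes pairing is used, both resulting pieces are controlled by precisely the two elementary facts you already invoke (BV-multiplication stability of $V^2$, and membership in $U^2\subset V^2$ of primitives with $L^1_tL^2$ derivative).
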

 \begin{proposition}\label{prop4}
 Let $ T_0 $ : $ L^2\times \cdot\cdot \cdot \times L^2 \to L^1_{loc}(\R^3; \R) $ be a $ n$-linear operator. Assume that for some $ 2\le p,q\le \infty $,
 $$
 \| T_0(S(\cdot) \phi_1,\cdot\cdot\cdot, S(\cdot) \phi_n) \|_{L^p_t(\R; L^q(\R^3))} \lesssim 
 \prod_{i=1}^n \| \phi\|_{L^2} \; .
 $$
 Then there exists $ T\, :\, U^2_S \times \cdot\cdot \cdot \times U^2_S \to L^p_t(\R;L^q(\R^3)) $ satisfying 
  $$
 \| T(u_1,\cdot\cdot\cdot,  u_n) \|_{L^p_t(\R; L^q (\R^3))} \lesssim 
 \prod_{i=1}^n \| u_i\|_{U^2_S} 
 $$
 such that $T(u_1,\cdot\cdot\cdot, u_n)(t)(x,y,z)=T_0(u_1(t),\cdot\cdot\cdot, u_n(t))(x,y,z) $ almost everywhere. 
 \end{proposition}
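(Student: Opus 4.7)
The plan is to exploit the atomic structure of $U^2_S$ to transfer the Strichartz-type estimate satisfied by $T_0$ on free solutions $S(\cdot)\phi$ to general elements of $U^2_S$. I would first define $T$ by the natural pointwise-in-$t$ formula
\begin{equation*}
T(u_1,\ldots,u_n)(t) := T_0(u_1(t),\ldots,u_n(t)) \; ,
\end{equation*}
which makes sense as an element of $L^1_{loc}(\R^3;\R)$ for a.e.\ $t$, because the embedding $U^2_S \hookrightarrow L^\infty_t L^2_{xyz}$ guarantees $u_i(t)\in L^2$ for a.e.\ $t$. With this definition the identification $T(u_1,\ldots,u_n)(t)(x,y,z)=T_0(u_1(t),\ldots,u_n(t))(x,y,z)$ is automatic; only the norm inequality remains to be proved.

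By the atomic definition of $\|\cdot\|_{U^2_S}$, one can decompose $u_i=\sum_{j}\lambda_{j}^{(i)}\,S(\cdot)a_j^{(i)}$ with $\sum_j|\lambda_j^{(i)}|$ arbitrarily close to $\|u_i\|_{U^2_S}$ and each $a_j^{(i)}$ a $U^2$-atom. By $n$-linearity and the triangle inequality in $L^p_tL^q$, it therefore suffices to establish the uniform bound $\|T(v_1,\ldots,v_n)\|_{L^p_tL^q}\lesssim 1$ whenever each $v_i(t)=\sum_{k_i=1}^{K_i}\mathbf{1}_{I^{(i)}_{k_i}}(t)\,S(t)\phi^{(i)}_{k_i-1}$ is a $U^2_S$-atom, with $\{I^{(i)}_{k_i}\}_{k_i}$ a partition of $\R$ and $\sum_{k_i}\|\phi^{(i)}_{k_i-1}\|_{L^2}^2\le 1$. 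The key observation is that, as $(k_1,\ldots,k_n)$ ranges over all admissible tuples, the product cells $\bigcap_i I^{(i)}_{k_i}$ are pairwise disjoint in $t$ (for each fixed $i$ the intervals $I^{(i)}_{k_i}$ are already disjoint), and on each such cell $T(v_1,\ldots,v_n)(t)$ coincides with $T_0(S(t)\phi^{(1)}_{k_1-1},\ldots,S(t)\phi^{(n)}_{k_n-1})$. For $p<\infty$, raising to the $p$-th power, integrating in $t$, enlarging each inner integration to all of $\R$, and applying the hypothesis on $T_0$ yields
\begin{align*}
\|T(v_1,\ldots,v_n)\|_{L^p_tL^q}^p &= \sum_{k_1,\ldots,k_n}\int_{\bigcap_i I^{(i)}_{k_i}}\|T_0(S(t)\phi^{(1)}_{k_1-1},\ldots,S(t)\phi^{(n)}_{k_n-1})\|_{L^q}^p\, dt \\
&\lesssim \prod_{i=1}^n \sum_{k_i}\|\phi^{(i)}_{k_i-1}\|_{L^2}^p \; .
\end{align*}

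The estimate then closes by the elementary $\ell^2 \hookrightarrow \ell^p$ embedding valid for $p\ge 2$: the atomic normalization forces $\|\phi^{(i)}_{k_i-1}\|_{L^2}\le 1$, so $\sum_{k_i}\|\phi^{(i)}_{k_i-1}\|_{L^2}^p\le\sum_{k_i}\|\phi^{(i)}_{k_i-1}\|_{L^2}^2\le 1$, giving $\|T(v_1,\ldots,v_n)\|_{L^p_tL^q}\lesssim 1$ as required. The case $p=\infty$ is handled analogously with suprema replacing the $t$-integrals and the $\ell^p$ sums. The only subtlety I foresee is structural rather than computational: one must resist replacing the $n$ partitions by a common refinement, which would replicate the $\phi^{(i)}_{k_i-1}$ across sub-intervals and destroy their $\ell^2$ normalization. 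Keeping the partitions separate and exploiting the disjointness of the product cells is precisely what allows the hypothesis on $T_0$ to be applied cleanly on each cell and then summed over the product index set.
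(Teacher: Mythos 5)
Your proof is correct and is precisely the standard transference argument for $U^2$-type spaces: reduce to atoms by multilinearity and the triangle inequality, use the pairwise disjointness of the product cells $\bigcap_i I^{(i)}_{k_i}$ to split the $L^p_t$ norm, apply the free-solution hypothesis on each cell after enlarging to all of $\mathbb{R}$, and close with $\ell^2\hookrightarrow\ell^p$ for $p\ge 2$ coming from the atomic normalization. The paper gives no proof of this proposition and simply cites Hadac--Herr--Koch \cite{HHK}, where exactly this argument appears, so there is nothing to object to.
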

 We are now ready  to define our resolution spaces : we denote by $ Y^{1,1}$ the space of all functions $ u\in {\mathcal S}'(\R^4) $ such that 
 $$
 \|u\|_{Y^{1,1}} := \sum_{N} N \| P_{N}u\|_{U^2_S} <\infty 
$$
and by $Y^{s,2} $ the space of all functions $ u\in {\mathcal S}'(\R^4) $ such that 
 $$
 \|u\|_{Y^{s,2}} := \Bigl( \sum_{N} N^{2s} \| P_{N}u\|_{U^2_S}^2 \Bigr)^{1/2} <\infty \; .
 $$
 Here, the Fourier projector $ P_N $ is the $ \R^3 $-version of the one defined in \eqref{proj}, i.e. $P_1 $ localized in frequencies 
  $ 3\xi^2+\mu^2+\eta^2 \lesssim 1 $ while  for  $ N\ge 2$, $ P_N $ localized in frequencies  $ 3\xi^2+\mu^2+\eta^2 \sim N $. 
  \subsubsection{Local existence estimate}
 Note that Proposition \ref{prop3} ensures that 
 \begin{equation}\label{65}
 \|\psi(\cdot) S(\cdot) u_0\|_{Y^{1,1}} \lesssim \|u_0\|_{B^{1,1}_2}  , \quad \forall u_0\in B^{1,1}_2(\R^3),  
 \end{equation}
 and 
 \begin{equation}
 \|\psi(\cdot) S(\cdot) u_0\|_{Y^{s,2}} \lesssim \|u_0\|_{H^s} , \quad \forall u_0\in H^s(\R^3) \; .
 \end{equation}
 Moreover, Proposition \ref{prop4} lead to the following estimates in $ U^2_S $ :
 \begin{lemma}\label{lem5}
 Let $ \psi \in C^\infty_c(\R) $. For any $ u\in U^2_S $ it holds 
 $$
 \|\psi u \|_{L^4} \lesssim \|u\|_{U^2_S}  \; .
 $$
 For any couple $ u,v \in  U^2_S $ and any couple $(N_1,N_2) $ of dyadic number such that $ N_1\ge 4 N_2 $ it holds 
 $$
 \|\psi P_{N_1} u P_{N_2} v \|_{L^2} \lesssim \frac{N_2}{N_1}  \|P_{N_1} u\|_{U^2_S}  \|P_{N_2} v\|_{U^2_S} \; .
 $$
 \end{lemma}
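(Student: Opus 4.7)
The plan is to invoke Proposition \ref{prop4} to reduce both inequalities to the analogous bounds for free solutions $S(\cdot)\phi$; the cutoff $\psi$ is harmless since it is bounded and the underlying free-solution bounds are global in time. For part (i), applying Proposition \ref{prop4} to the trivial unary operator transfers the linear $L^4$-Strichartz estimate
\[
\|S(t)\phi\|_{L^4(\R^4)} \lesssim \|\phi\|_{L^2(\R^3)}
\]
to the inequality $\|u\|_{L^4(\R^4)} \lesssim \|u\|_{U^2_S}$. This Strichartz estimate lies on the scale-invariant line for a third-order symbol in three space variables ($(q,p)=(4,4)$ satisfies $2/q=1-2/p$) and follows by the usual $TT^*$ argument from the $L^1\to L^\infty$ decay $\|S(t)\phi\|_{L^\infty}\lesssim |t|^{-1}\|\phi\|_{L^1}$ for the 3D ZK propagator, which is already available from \cite{LS} and used in \cite{RV}.

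For part (ii), Proposition \ref{prop4} reduces the task to the bilinear estimate
\[
\|S(\cdot)P_{N_1}\phi_1 \cdot S(\cdot)P_{N_2}\phi_2\|_{L^2(\R^4)} \lesssim \frac{N_2}{N_1}\,\|P_{N_1}\phi_1\|_{L^2}\|P_{N_2}\phi_2\|_{L^2}
\]
for free solutions under $N_1 \ge 4 N_2$. Following the template of Proposition \ref{BilinStrichartzI}, Plancherel and Cauchy-Schwarz bound the left-hand side by $\sup_{\xi,\mu,\eta,\tau}|A_{\xi,\mu,\eta,\tau}|^{1/2}\|P_{N_1}\phi_1\|_{L^2}\|P_{N_2}\phi_2\|_{L^2}$, where $A_{\xi,\mu,\eta,\tau}$ is the set of $(\xi_1,\mu_1,\eta_1)\in\R^3$ obeying the two frequency localizations and the convolution identity $\tau = w(\xi_1,\mu_1,\eta_1)+w(\xi-\xi_1,\mu-\mu_1,\eta-\eta_1)$, with $w(\xi,\mu,\eta)=\xi^3+\xi(\mu^2+\eta^2)$. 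The partial derivative
\[
\frac{\partial}{\partial\xi_1}\bigl(w(\xi_1,\mu_1,\eta_1)+w(\xi-\xi_1,\mu-\mu_1,\eta-\eta_1)\bigr) = \bigl(3\xi_1^2+\mu_1^2+\eta_1^2\bigr) - \bigl(3(\xi-\xi_1)^2+(\mu-\mu_1)^2+(\eta-\eta_1)^2\bigr)
\]
has modulus $\gtrsim N_1^2$ thanks to $N_1\ge 4N_2$. The constraint on the low-frequency factor forces $(\mu_1,\eta_1)$ to lie in a disk of area $\lesssim N_2^2$, and for each such $(\mu_1,\eta_1)$ the mean value theorem (Lemma \ref{basicII}) restricts $\xi_1$ to an interval of length $\lesssim 1/N_1^2$. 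Hence $|A|\lesssim N_2^2/N_1^2$, and the square root gives the claimed gain $N_2/N_1$.

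The potentially delicate step is the linear $L^4$-Strichartz bound, whose proof via stationary phase must handle the degenerate set $\xi(3\xi^2-\mu^2-\eta^2)=0$ on which $\det\text{Hess}(w)$ vanishes; since this estimate is already available from \cite{LS,RV}, it can simply be cited. Otherwise the argument is a direct adaptation of Proposition \ref{BilinStrichartzI} to the 3D setting, with the extra transverse dimension responsible for upgrading the 2D gain $(N_2/N_1)^{1/2}$ to the stronger $N_2/N_1$ needed here.
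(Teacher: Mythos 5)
Your part (i) is essentially the paper's argument: transfer the time-localized $L^4$ Strichartz estimate of Linares--Saut for the 3D group to $U^2_S$ via Proposition \ref{prop4}. For part (ii) you reach the same free-solution bilinear input by a genuinely different route. The paper does not compute the convolution of surface-carried measures directly; it records the $\R^3$-analogue of estimate \eqref{BilinStrichartzI1} for modulation-localized pieces, with gain $(N_1\wedge N_2)/(N_1\vee N_2)$ coming from exactly the observation you make ($|\partial_{\xi_1}H|\gtrsim (N_1\vee N_2)^2$, the extra transverse variable upgrading the $(\mu_1,\eta_1)$-slice to area $N_2^2$), and then converts this into a bound on $(\psi P_{N_1}S(\cdot)g)(\psi P_{N_2}S(\cdot)f)$ using the rapid modulation decay $\|Q_L\,\psi S(\cdot)g\|_{L^2}\lesssim L^{-4}\|g\|_{L^2}$ and summing over $L_1,L_2$, which absorbs the factors $(L_1\vee L_2)^{1/2}(L_1\wedge L_2)^{1/2}$; only then is Proposition \ref{prop4} invoked. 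Your direct computation is correct and slightly stronger (it yields the global-in-time estimate without the cutoff $\psi$), but note one point of rigor: the set $A_{\xi,\mu,\eta,\tau}$ cut out by the exact identity $\tau=w(\xi_1,\mu_1,\eta_1)+w(\xi-\xi_1,\mu-\mu_1,\eta-\eta_1)$ has Lebesgue measure zero in $\R^3$, so ``$|A|$'' must be understood as the integral of $\delta(\tau-w_1-w_2)$ against the frequency constraints (Cauchy--Schwarz with respect to that surface measure, the Jacobian $|\partial_{\xi_1}(w_1+w_2)|^{-1}\lesssim N_1^{-2}$ replacing the appeal to Lemma \ref{basicII}), or else one thickens the surface into modulation blocks --- which is precisely what the paper's detour accomplishes while letting it reuse Proposition \ref{BilinStrichartzI} verbatim. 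With that reading, both proofs are sound and yield the same $N_2/N_1$ gain.
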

 \begin{proof}
The first estimate is a direct combination of the Strichartz estimate for the ZK equation in $ \R^3 $ (see \cite{LS})
$$
\| \psi S(\cdot) g \|_{L^4(\R^4)} \lesssim \| g\|_{L^2(\R^3)}
$$
with Proposition \ref{prop4}. 
To prove the second estimate we notice that since 
\begin{equation*}
\begin{split}
\Big| \frac{\partial H}{\partial \xi_1}&(\xi_1,\xi-\xi_1,\mu_1,\mu-\mu_1, \eta_1, (\eta-\eta_1))\Big| \\  =& 
\big|3\xi_1^2+\mu_1^2+\eta_1^2-(3(\xi-\xi_1)^2+(\mu-\mu_1)^2+(\eta-\eta_1)^2) \big| 
  \gtrsim  (N_1 \vee N_2)^2 \  .
\end{split}
\end{equation*}
where $ H $ is the resonance function in dimension 3, 
the $ \R^3 $-version of the bilinear estimate \eqref{BilinStrichartzI1} reads
$$
\|(P_{N_1}Q_{L_1}u_1) (P_{N_2}Q_{L_2}u_2)\|_{L^2}   \lesssim  
\frac{(N_1 \wedge N_2)}{N_1 \vee N_2}(L_1 \vee
L_2)^{\frac12}(L_1 \wedge L_2)^{\frac12}\|P_{N_1}Q_{L_1}u_1\|_{L^2}
\|P_{N_2}Q_{L_2}u_2\|_{L^2} \; .
$$
Since for $ \psi\in C^\infty_c(\R) $, $ g \in L^2(\R^3) $ and any dyadic number $ L \ge 1 $  it holds
$$
\|Q_L \psi S(\cdot) g \|_{L^2} \lesssim L^{-4} \|g \|_{L^2} 
$$
this ensures that 
$$
\|(\psi P_{N_1} S(\cdot) g)( \psi P_{N_2} S(\cdot) f) \|_{L^2} \lesssim \frac{N_2}{N_1}\|g\|_{L^2} \|f\|_{L^2} \; .
$$
The desired estimate follows by applying Proposition \ref{prop4}.
 \end{proof}
 We are now in position to prove the needed estimates on the retarded Duhamel operator.
 \begin{proposition}\label{prop6}
 For all $ u,v\in Y^{1,1} $ with compact support in time in $ ]-T,T[ $ it holds 
 \begin{equation}\label{po1}
 \Bigl\|\psi(t)\int_0^t S(t-t') \partial_x(u v)(t') \, dt' \Bigr\|_{Y^{1,1}} \lesssim T^{1/2} \|u\|_{Y^{1,1}} \|v\|_{Y^{1,1}} \; .
 \end{equation}
 For all  $u,v\in Y^{s,2}$, $s>1$,  with compact support in time in $ ]-T,T[ $ it holds 
  \begin{equation}\label{po2}
 \Bigl\|\psi(t)\int_0^t S(t-t') \partial_x(u v)(t') \, dt' \Bigr\|_{Y^{s,2}}\lesssim T^{1/2}\Bigl(  \|u\|_{Y^{1,1}} \|v\|_{Y^{s,2}}
 + \|u\|_{Y^{s,2}} \|v\|_{Y^{1,1}}\Bigr) \; .
 \end{equation}
 \end{proposition}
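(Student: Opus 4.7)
The proof proceeds by duality via Proposition \ref{prop3}: for each dyadic $N$,
\[
\|P_N \,\psi(t)\int_0^t S(t-t')\partial_x(uv)(t')\,dt'\|_{U^2_S}
\;\lesssim\; \sup_{\|w\|_{V^2_S}=1}\Bigl|\int_{\R^4} P_N\partial_x(uv)\cdot w\,dxdydzdt\Bigr|.
\]
Integrating by parts moves $\partial_x$ onto $w$, contributing a factor $N$ since $P_N w$ is the only relevant projection of $w$. I then decompose $u=\sum_{N_1}P_{N_1}u$, $v=\sum_{N_2}P_{N_2}v$ and split into the familiar regions $LL\to L$, $HL\to H$ ($N\sim N_1\gg N_2$), $LH\to H$ ($N\sim N_2\gg N_1$), and $HH\to L/H$ ($N_1\sim N_2\gtrsim N$), exactly as in the proof of Proposition \ref{BilinR2}.

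For the high–low regions $HL$ and $LH$, I use the second bilinear inequality of Lemma \ref{lem5}, which gives the crucial gain $N_2/N_1$ (resp.\ $N_1/N_2$), and pair it with Cauchy–Schwarz against $P_N w$: since $u,v$ are supported in $]-T,T[$, $\psi(t/T)\equiv 1$ there and \eqref{zzz} yields $\|\psi(t/T)P_N w\|_{L^2_{txyz}}\lesssim T^{1/2}\|P_N w\|_{V^2_S}$. Thus in the $HL$ case one obtains
\[
\Bigl|\int P_N\partial_x(P_{N_1}u\,P_{N_2}v)\,w\Bigr|
\;\lesssim\; N\cdot\tfrac{N_2}{N_1}\,\|P_{N_1}u\|_{U^2_S}\|P_{N_2}v\|_{U^2_S}\cdot T^{1/2}\|w\|_{V^2_S},
\]
and since $N\sim N_1$ the $N_1$ in the denominator absorbs the derivative, leaving a clean factor $N_2$. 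For the high–high region I instead apply Hölder with the $L^4$ Strichartz estimate (first part of Lemma \ref{lem5}) on each factor, which replaces the bilinear gain by a plain $L^4\times L^4\to L^2$ bound; since $N\lesssim N_1\sim N_2$, the extra $N$ from $\partial_x$ is harmless once summed in $N$.

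It then remains to put these pointwise-in-$N$ bounds into the $Y^{1,1}$ or $Y^{s,2}$ norm. For \eqref{po1} one multiplies by $N$ and sums over $N$: in the $HL$ and $LH$ regions the sum over $N\sim N_1$ (resp.\ $N\sim N_2$) is trivial and decouples the two Besov sums; in the $HH$ region, summing $\sum_{N\lesssim M}N^2\lesssim M^2$ for fixed $M\sim N_1\sim N_2$ produces the product $\bigl(M\|P_Mu\|_{U^2_S}\bigr)\bigl(M\|P_Mv\|_{U^2_S}\bigr)$, which is absorbed by $\|u\|_{Y^{1,1}}\|v\|_{Y^{1,1}}$. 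For \eqref{po2} I take the weighted $\ell^2$ norm: in the $HL$ region the high index $N_1\sim N$ carries the $N^s$ weight and is absorbed by $\|u\|_{Y^{s,2}}$, the low-frequency sum giving $\|v\|_{Y^{1,1}}$; the $LH$ region is symmetric. In the $HH$ region I apply Minkowski to $\bigl(\sum_N N^{2s}(\sum_{M\gtrsim N}a_M)^2\bigr)^{1/2}$, which leaves $\sum_M M^{s+1}a_M=\sum_M(M\|P_Mu\|_{U^2_S})(M^s\|P_Mv\|_{U^2_S})$ controlled by $\|u\|_{Y^{1,1}}\|v\|_{Y^{s,2}}$ after Cauchy–Schwarz and the embedding $\ell^1\hookrightarrow\ell^2$.

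The main technical point, and the one that requires the greatest care, is the $HH\to L/H$ region in the $Y^{s,2}$ estimate: the Strichartz-based $L^4$ bound is independent of $N$, so the factor $N$ coming from $\partial_x$ has to be summed against the Besov weight, and the rearrangement of $\sum_{M\gtrsim N}$ via Minkowski/Cauchy–Schwarz must be performed carefully (using $s>1$ implicitly only through the $\ell^1\subset\ell^2$ step applied to the $Y^{1,1}$ factor) to ensure the asymmetric bound $\|u\|_{Y^{1,1}}\|v\|_{Y^{s,2}}+\|u\|_{Y^{s,2}}\|v\|_{Y^{1,1}}$ that the proposition demands. The gain $T^{1/2}$ comes uniformly from \eqref{zzz} at every step.
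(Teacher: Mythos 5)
Your proposal follows essentially the same route as the paper: dualize via Proposition \ref{prop3}, split into high--low (where the bilinear gain $N_2/N_1$ of Lemma \ref{lem5} absorbs the derivative) and high--high (where the $L^4$ Strichartz bound is used), and extract the factor $T^{1/2}$ from the dual function $w$ through \eqref{zzz} thanks to the compact time support; the only cosmetic difference is that you integrate $\partial_x$ by parts onto $w$ while the paper keeps it on the frequency-localized product. Your summation argument for \eqref{po2} (which the paper omits) is correct, so the proof is complete and matches the paper's strategy.
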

 \begin{proof}
 We separate the contribution of $ \sum_{N_1\not\sim N_2} P_{N_1} u P_{N_2} v $ and the one of $ \sum_{N_1\sim N_2} P_{N_1} u P_{N_2} v $.
 We use Proposition \ref{prop3}, Lemma \ref{lem5} and \eqref{zzz}. 
  For the first one we assume without loss of generality that $ N_1\ge 4 N_2 $ to  get 
   \arraycolsep1pt
  \begin{eqnarray*}
  \sum_{N} \sum_{N_1\ge 4 N_2} N\Bigl\|\psi(t)&&\int_0^t S(t-t') \partial_x P_N (u v)(t') \, dt' \Bigr\|_{U^2_S}\\
  && \lesssim   \sup_{\|w\||_{V^2_S}=1}\Bigl(  \sum_{N_1\ge 4  N_2} N_1 \| \partial_x (P_{N_1} u P_{N_2} v ) \|_{L^2} \|P_{N_1} w \|_{L^2} \Bigr)\\
  && \lesssim  T^{1/2}   \sup_{\|w\||_{V^2_S}=1}\Bigl(  \sum_{N_1\ge 4  N_2} N_1^2 \frac{N_2}{N_1} \|P_{N_1} u \|_{U^2_S} \|P_{N_2} v \|_{U^2_S}
  \|P_{N_1} w\|_{V^2_S} \Bigr) \\
  && \lesssim  T^{1/2} \|u\|_{Y^{1,1}}  \|v\|_{Y^{1,1}}  \; .
  \end{eqnarray*}
  Whereas the contribution of the second one is easily estimated by 
    \begin{eqnarray*}
  \sum_{N} \sum_{N_1\sim N_2\gtrsim N } N \Bigl\|\psi(t)&&\int_0^t S(t-t') \partial_x P_N (u v)(t') \, dt' \Bigr\|_{U^2_S}\\
  & & \lesssim   \sup_{\|w\||_{V^2_S}=1}\Bigl( \sum_{N}
  N^2 \sum_{N_1\sim N_2\gtrsim N} \|P_{N_1} u \|_{U^2_S} \|P_{N_2} v \|_{U^2_S} \|P_{N} w \|_{L^2} \Bigr)\\
  && \lesssim  T^{1/2}  \sum_{N_1} N_1^{-2} \sum_{l\ge 0} 2^{-l} N_1^2  (N_1\|P_{N_1} u \|_{U^2_S} )(N_1\|P_{N_1} v \|_{U^2_S})\\
    & &\lesssim T^{1/2} \|u\|_{Y^{1,1}}  \|v\|_{Y^{1,1}} \; .
  \end{eqnarray*}
  \arraycolsep5pt
  Finally the proof of \eqref{po2} follows the same lines and thus will be omitted.
 \end{proof}
 Note that the definition of the function space $ U^2_S $   ensures that for any $ 0<T<1 $ and any smooth function $ \psi\in C^\infty_c(\R) $ it holds
 $$
 \|\psi(\cdot/T) u\|_{U^2_S} \lesssim \|u\|_{U^2_S} , \quad \forall u\in U^2_S  \; .
 $$
  Therefore, combining \eqref{65} and  Proposition \ref{prop6}, we deduce that for any $ 0<T<1 $, the functional  
  $$
  {\mathcal G}_T(w)(t,\cdot):=\psi(t) S(t) u_0 -\frac{1}{2} \int_0^t S(t-t') \partial_x (\psi(\cdot/T) w)^2(t',\cdot)\, dt' 
  $$
  maps $Y^{1,1}$ into itself and satisfies
  $$
  \|{\mathcal G}_T(w) \|_{Y^{1,1}} \lesssim \|u_0\|_{B^{1,1}_2} + T^{1/2} \|w \|_{Y^{1,1}}^2 \; .
  $$
The Banach fixed point theorem then ensures  that for $ T\lesssim \|u_0\|_{B^{1,1}_2}^{-2} $, $ {\mathcal G}_T$ has got a fixed point  $ u $ satisfying 
 $ \|u \|_{Y^{1,1}}\le  2 \|u_0\|_{B^{1,1}_2}$. This proves the local existence and uniqueness  in the time  restriction space $ Y^{1,1} _T$ of the solution $ u $ of ZK emanating from 
 $ u_0\in B^{1,1}_2 (\R^3) $ 
  with a doubling time satisfying \eqref{Z3}. The result for $ u_0\in H^s(\R^3) $, $ s>1$, follows by noticing that \eqref{po2} implies that 
  $ {\mathcal G}_T $ maps as well $Y^{s,2}$ into itself  with 
  $$
  \|{\mathcal G}_T(w) \|_{Y^{s,2}} \lesssim \|u_0\|_{H^s} + T^{1/2}  \|w \|_{Y^{1,1}}\|w \|_{Y^{s,2}} \; .
  $$
  This completes the proof of Theorem \ref{prop2}.
   \begin{merci}
The authors were partially  supported by the Brazilian-French
program in mathematics. This work was initiated during a visit of the first author
at the Mathematical Institute of the Federal University of Rio de Janeiro (IM-UFRJ) in Brazil. He would like to thank the
IM-UFRJ for the kind hospitality\end{merci}

\end{document}